\newtheorem{theorem}{Theorem}[section]
\newtheorem{lemma}[theorem]{Lemma}
\newtheorem{example}[theorem]{Example}
\newtheorem{proposition}[theorem]{Proposition}
\newtheorem{remark}[theorem]{Remark}
\newtheorem{definition}[theorem]{Definition}
\newenvironment{proof of theorem 6.2}{{\it Proof of Theorem 6.2}.}{{\hfill $\square$%
    \hskip - \parfillskip}}
\begin{document}
\setcounter{page}{1}
\title{Hyperbolic inverse mean curvature flow}
\author{Zhe Zhou,~~ Chuan-Xi Wu,~~ Jing Mao$^{\ast}$}
\date{}
\protect\footnotetext{\!\!\!\!\!\!\!\!\!\!\!\!\!{$^{\ast}$Corresponding author}\\
 {\bf MSC 2010:} 58J45; 58J47
\\
{\bf ~~Key Words:} Evolution equations; Hyperbolic inverse mean
curvature flow; Short time existence.}
\maketitle ~~~\\[-15mm]
\begin{center}
{\footnotesize  Faculty of Mathematics and Statistics,\\
Key Laboratory of Applied Mathematics of Hubei Province,\\
 Hubei University, Wuhan, 430062, China \\
jiner120@163.com, jiner120@tom.com}
\end{center}
\begin{abstract}
In this paper, we prove the short-time existence of hyperbolic
inverse (mean) curvature flow (with or without the specified forcing
term) under the assumption that the initial compact smooth
hypersurface of $\mathbb{R}^{n+1}$ ($n\geqslant2$) is mean convex
and star-shaped. Several interesting examples and some hyperbolic
evolution equations for geometric quantities of the evolving
hypersurfaces have been shown. Besides, under different assumptions
for the initial velocity, we can get the expansion and the
convergence results of a hyperbolic inverse mean curvature flow in
the plane $\mathbb{R}^2$, \emph{whose evolving curves move
normally}.
\end{abstract}

\markright{\sl\hfill Z. Zhou,  C.-X. Wu,  J. Mao \hfill}

\section{Introduction}
\renewcommand{\thesection}{\arabic{section}}
\renewcommand{\theequation}{\thesection.\arabic{equation}}
\setcounter{equation}{0} \setcounter{maintheorem}{0}

Curvature flows is a hot topic in the research of Differential
Geometry in the past several decades. It is well known that Perelman
used the Ricci flow, an intrinsic curvature flow, to successfully
solve the $3$-dimensional Poincar\'{e} conjecture. Among extrinsic
curvature flows, an important one is the mean curvature flow (MCF
fors short), which means a submanifold of a prescribed ambient space
moves with a speed equal to its mean curvature vector. A classical
result in the study of MCF due to Huisken \cite{gh} says that for a
strictly convex, compact hypersurface immersed in $\mathbb{R}^{n+1}$
($n\geqslant2$), if it evolves along the MCF, then evolving
hypersurfaces contract to a single point at some finite time, and
moreover, after area-preserving rescaling, the rescaled evolving
hypersurfaces converge to a round sphere in the
$C^{\infty}$-topology as time tends to infinity. Many improvements
have been obtained after this classical result. Besides, the theory
of MCF also has some interesting applications. For instance, Topping
\cite{pt} used curve shortening flow on surfaces, which is the lower
dimensional version of MCF, to get isoperimetric inequalities on
surfaces. The theory of curve shortening flow  can also be used to
do the image processing (see, e.g., \cite{cf}). The MCF is called
\emph{inward flow}, and conversely, the inverse mean curvature flow
(IMCF for short), which means a submanifold of a prescribed ambient
space moves in direction of the outward unit normal vector of the
submanifold with a speed equal to $1/H$ ($H\neq0$ denotes the mean
curvature), is called \emph{outward flow}. The IMCF is also a very
important extrinsic flow, which has many interesting and important
applications. For instance, the evolution of non-star-shaped initial
surfaces under the IMCF may occur singularities in finite time, but,
through defining a notion of weak solution to IMCF equation,
Huisken-Ilmanen \cite{hi} proved the Riemannian Penrose inequality
by using the method of IMCF (the Riemannian Penrose inequality can
also be proved by applying the positive mass theorem, see \cite{hb}
for details). Using the method of IMCF, Brendle, Hung and Wang
\cite{bhw} proved a sharp Minkowski inequality for mean convex and
star-shaped hypersurfaces in the $n$-dimensional ($n\geqslant3$)
anti-de Sitter-Schwarzschild manifold, which generalized the related
conclusions in the Euclidean space $\mathbb{R}^n$.

The corresponding author, Dr. Jing Mao, has been working on IMCF for
several years and has also obtained some interesting results with
his collaborators. For instance, Chen and Mao \cite{cm} considered
the evolution of a smooth, star-shaped and $F$-admissible ($F$ is a
1-homogeneous function of principle curvatures satisfying some
suitable conditions) embedded closed hypersurface in the
$n$-dimensional ($n\geqslant3$) anti-de Sitter-Schwarzschild
manifold along its outward normal direction has a speed equal to
$1/F$ (clearly, this evolution process is a natural generalization
of IMCF, and we call it \emph{inverse curvature flow}. We write as
ICF for short), and they proved that this ICF exists for all the
time and, after rescaling, the evolving hypersurfaces converge to a
sphere as time tends to infinity. This interesting conclusion has
been improved by Chen, Mao and Zhou \cite{cmz} to the situation that
the ambient space is a warped product $I\times_{\lambda(r)}N^{n}$
with $I$ an unbounded connected interval of $\mathbb{R}$ (i.e., the
set of real numbers) and $N^{n}$ a Riemannian manifold of
nonnegative Ricci curvature. Also for this kind of warped products
$I\times_{\lambda(r)}N^{n}$, under suitable growth assumptions on
the warping function $\lambda(r)$, Chen, Mao, Xiang and Xu
\cite{cmxc} successfully proved that if an $n$-dimensional
($n\geqslant2$) compact $C^{2,\alpha}$-hypersurface with boundary,
which meets a given cone in $I\times_{\lambda(r)}N^{n}$
perpendicularly and is star-shaped with respect to the center of the
cone, evolves along the IMCF, then the flow exists for all the time
and, after rescaling, the evolving hypersurfaces converge to a piece
of the geodesic sphere as time tends to infinity, which generalized
the main conclusion in \cite{Ma1}.

We know that the MCF and the IMCF describe the motion of a
prescribed submanifold, that is, the velocity $\frac{d}{dt}$ equals
some scalar multiple of the unit normal vector of the submanifold.
If the velocity $\frac{d}{dt}$ is replaced by the acceleration
$\frac{d^2}{dt^{2}}$, what happens? Yau \cite{y1} suggested the
following curvature flow
\begin{eqnarray}  \label{h1}
\frac{d^{2}X}{dt^{2}}=H\vec{n},
\end{eqnarray}
where, as before, $H$ denotes the mean curvature and $\vec{n}$ is
the unit inner normal vector of the initial hypersurface
$X(\cdot,0)$, and pointed out very little about the global time
behavior of the evolving hypersurfaces. The curvature flow
(\ref{h1}) can be seen as the hyperbolic version of MCF, and that is
the reason why it is called \emph{hyperbolic mean curvature flow}
(HMCF for short). In fact, if $\mathcal{M}$ is an $n$-dimensional
($n\geqslant2$) smooth compact Riemannnian manifold and $X(\cdot,t)$
is a one-parameter family of smooth hypersurface immersions in
$\mathbb{R}^{n+1}$ satisfying (\ref{h1}), where $X(\cdot,0)$ is the
hypersurface immersion of $\mathcal{M}$ into $\mathbb{R}^{n+1}$,
then it is not hard to show that (\ref{h1}) is a second-order
hyperbolic PDE, which is used to get the short time existence of the
flow (see \cite[Section 2]{hkl} for details). Mao \cite{m1}
considered a hyperbolic curvature flow whose form is given by
(\ref{h1}) plus a forcing term in direction of the position vector,
that is,
\begin{eqnarray*}
\frac{\partial^{2}X}{\partial t^{2}}=H\vec{n}+c(t)X
\end{eqnarray*}
with $c(t)$ a bounded continuous function w.r.t. the time variable
$t$ only, and successfully improved most conclusions in \cite{hkl}
under suitable assumptions.

Based on our research experience on the ICF and the HMCF, it is
natural to consider the the hyperbolic version of the IMCF.

Let $M_0$ be a compact, mean convex, star-shaped smooth hypersurface
of the $(n+1)$-dimensional Euclidean space $\mathbb{R}^{n+1}$
($n\geqslant2$), which is given as an embedding
\begin{eqnarray*}
X_{0}:\mathbb{S}^{n}\rightarrow\mathbb{R}^{n+1},
\end{eqnarray*}
where $\mathbb{S}^{n}\subset\mathbb{R}^{n+1}$ denotes the unit
sphere in $\mathbb{R}^{n+1}$. Define a one-parameter family of
smooth hypersurfaces embedding in $\mathbb{R}^{n+1}$ given by
 \begin{eqnarray*}
 X(\cdot,t):\mathbb{S}^{n}\rightarrow
\mathbb{R}^{n+1}
 \end{eqnarray*}
 with $X(\cdot,0)=X_{0}(\cdot)$, and we say that it is a solution of the \emph{hyperbolic inverse mean
curvature flow} (HIMCF for short) if $ X(\cdot,t)$ satisfies
\begin{eqnarray*}
{\frac{d^{2}}{d
t^{2}}X(x,t)=H^{-1}(x,t)\vec{\nu}(x,t),~~~~\quad\forall x\in
{\mathbb{S}^{n}},~t>0,}
\end{eqnarray*}
where $H(x,t)$ is the mean curvature of $X(x,t)$, $\vec{\nu}(x,t)$
is the unit outward normal vector on $X(x,t)$. If $X(\cdot,0)=X_0$,
$\frac{d X}{d t}(\cdot,0)=X_{1}(x)$ with $X_{1}(x)$ a smooth
vector-valued function on $\mathbb{S}^n$, then one can get the
existence of the one-parameter family of smooth hypersurfaces
$X(\cdot,t)$ embedding in $\mathbb{R}^{n+1}$ on the time interval
$[0,T)$ with $T<\infty$ (see Theorem \ref{maintheorem} for the
precise statement). Besides, under different assumptions for the
initial velocity, we separately discuss the expansion and the
convergence of a HIMCF in the plane $\mathbb{R}^2$, \emph{whose
evolving curves move normally}, in the last section (see Theorem
\ref{main-3} for the precise statement).

\begin{remark}
\rm{ As mentioned before, some interesting conclusions about IMCF or
ICF can be generalized from the setting that the ambient space is
the Euclidean space to the setting of warped products (see, e.g.,
\cite{cm,cmz,cmxc}). Hence, one might ask the following question:
\begin{itemize}

\item \emph{If we consider the HIMCF or the HICF (see Remark \ref{remark2-1} (2) below for this notion) in the warped product $I\times_{\lambda(r)}N^{n}$
with $I$ an unbounded connected interval of $\mathbb{R}$  and
$N^{n}$ a Riemannian manifold of nonnegative Ricci curvature, could
we get similar results to this paper under some suitable assumptions
on $\lambda(r)$?}

\end{itemize}
This question has been solved in \cite{wzm} and the answer is
positive.

}
\end{remark}

\section{Local existence and uniqueness}

\renewcommand{\thesection}{\arabic{section}}
\renewcommand{\theequation}{\thesection.\arabic{equation}}
\setcounter{equation}{0}

In this section, we would like to use the star-shaped assumption to
change the evolution equation in (\ref{IHMCF}) into a single
second-order hyperbolic PDE, which will lead to the short-time
existence and the uniqueness of the flow (\ref{IHMCF}).

Denote by $M_{t}$ the evolving hypersurface under the flow
(\ref{IHMCF}). Since $M_{0}$ is star-shaped, $M_{t}$ also should be
star-shaped on $[0,\epsilon)$ for some small enough $\epsilon>0$ by
continuity. Let the surface $M_{t}$ be represented as a graph over
$\mathbb{S}^{n}$, i.e., the embedding vector $x=(x^{\alpha})$ now
has the components
$$x^{n+1}=u(x,t),~~~~~ x^{i}=x^{i}(t),$$
with $(x^{i})$ local coordinates of $\mathbb{S}^{n}$.
  Furthermore, let $\xi=(\xi^{i})$ be a local coordinate system of
  $M_{t}$, which implies the graphic function $u$ can be written as
$u=u(x(\xi),t)$. Clearly, the outward unit normal vector in $(x,u)$
has the form
\begin{eqnarray*}
\vec{\nu}=\upsilon^{-1}(-D_{i}u,1),
\end{eqnarray*}
where
$$D_{i}u=\frac{\partial u}{\partial x^{i}},$$
$$\upsilon=\left(1+u^{-2}|Du|^{2}\right)^{\frac{1}{2}}=\left(1+u^{-2}\sigma^{ij}D_{i}uD_{j}u\right)^{\frac{1}{2}},$$
$(\sigma_{ij})$ being the metric of $\mathbb{S}^{n}$ in the
coordinates $(x^{i})$ and naturally $(\sigma^{ij})$ being its
inverse. Therefore, now, the Euclidean metric can be written as
$$ds^{2}=dr^{2}+r^{2}\sigma_{ij}dx^{i}dx^{j}.$$

  Then the evolution equation (\ref{IHMCF}) now yields
\begin{eqnarray} \label{kei}
\frac{d^{2}u}{dt^{2}}=\frac{1}{H\upsilon},~~~~\qquad~~\frac{d^{2}x^{i}}{dt^{2}}=-\frac{D^{i}u\cdot
u^{-2}}{H\upsilon}.
\end{eqnarray}
 On the other hand, by the chain rule, we have
\begin{eqnarray*}
\frac{du}{dt}=\frac{\partial u}{\partial
x^{i}}\frac{dx^{i}}{dt}+\frac{\partial u}{\partial t},
\end{eqnarray*}
and
\begin{eqnarray*}
\frac{d^{2}u}{dt^{2}}=\left(\frac{\partial^{2}u}{\partial
x^{i}\partial x^{j}}\frac{dx^{j}}{dt} +\frac{\partial^{2}u}{\partial
x^{i}\partial t}\right)\frac{dx^{i}}{dt}+\frac{\partial u}{\partial
x^{i}}\frac{d^{2}x^{i}}{dt^{2}}+ \frac{\partial^{2}u}{\partial
x^{i}\partial t}\frac{dx^{i}}{dt}+\frac{\partial^{2}u}{\partial
t^{2}}.
\end{eqnarray*}
Substituting (\ref{kei}) into the above equation yields
\begin{eqnarray*}
\frac{\partial^{2}u}{\partial t^{2}}&=&
\frac{d^{2}u}{dt^{2}}-\frac{\partial u}{\partial
x^{i}}\frac{d^{2}x^{i}}{dt^{2}}- \left(\frac{\partial^{2}u}{\partial
x^{i}\partial x^{j}}\frac{dx^{j}}{dt}\frac{dx^{i}}{dt}
+2\frac{\partial^{2}u}{\partial x^{i}\partial t}\frac{dx^{i}}{dt}\right)\\
&=&\frac{1}{H\upsilon}+D_{i}u\cdot\frac{D^{i}u\cdot
u^{-2}}{H\upsilon} -\left(\frac{\partial^{2}u}{\partial
x^{i}\partial x^{j}}\frac{dx^{j}}{dt}\frac{dx^{i}}{dt}
+2\frac{\partial^{2}u}{\partial x^{i}\partial t}\frac{dx^{i}}{dt}\right)\\
&=&\frac{\upsilon}{H}-\left(\frac{\partial^{2}u}{\partial
x^{i}\partial x^{j}}\frac{dx^{j}}{dt}\frac{dx^{i}}{dt}
+2\frac{\partial^{2}u}{\partial x^{i}\partial
t}\frac{dx^{i}}{dt}\right).
\end{eqnarray*}

  Let $\varphi=\log u$. For a graph $M$ over $\mathbb{S}^{n}$, the metric has the
components
\begin{eqnarray*}
g_{ij}=u_{i}u_{j}+u^{2}\sigma_{ij}=u^{2}(\sigma_{ij}+\varphi_{i}\varphi_{j}),
\end{eqnarray*}
and their inverses are
\begin{eqnarray*}
g^{ij}=u^{-2}\left(\sigma^{ij}-\frac{\varphi^{i}\varphi^{j}}{\upsilon^{2}}\right).
\end{eqnarray*}
 Besides, $\upsilon$ can be expressed as
\begin{eqnarray*}
\upsilon=\left(1+u^{-2}\sigma^{ij}D_{i}u
D_{j}u\right)^{\frac{1}{2}}=\left(1+\sigma^{ij}D_{i}\varphi
D_{j}\varphi\right)^\frac{1}{2}
=\left(1+|D\varphi|^{2}\right)^{\frac{1}{2}},
 \end{eqnarray*}
 and the second fundamental
form can be given as the following
\begin{eqnarray*}
h_{ij}&=&-\frac{1}{\upsilon}\left(u_{ij}-u\sigma_{ij}-\frac{2}{u}u_{i}u_{j}\right)\\
&=&\frac{u}{\upsilon}\left(\sigma_{ij}-\frac{u_{ij}}{u}+\frac{2}{u^{2}}u_{i}u_{j}\right)\\
&=&\frac{u}{\upsilon}\left(\sigma_{ij}-\frac{uu_{ij}-u_{i}u_{j}}{u^{2}}+\frac{u_{i}u_{j}}{u^{2}}\right)\\
&=&\frac{u}{\upsilon}\left(\sigma_{ij}-\varphi_{ij}+\varphi_{i}\varphi_{j}\right),
\end{eqnarray*}
Therefore, the mean curvature is
\begin{eqnarray*}
H&=&g^{ij}h_{ij}\\
&=&u^{-2}\left(\sigma_{ij}-\frac{\varphi_{i}\varphi_{j}}{\upsilon^{2}}\right)\cdot
\frac{u}{\upsilon}\left(\sigma_{ij}-\varphi_{ij}+\varphi_{i}\varphi_{j}\right)\\
&=&\frac{1}{u\upsilon}\left(n-\sigma^{ij}\varphi_{ij}+\sigma^{ij}\varphi_{i}\varphi_{j}
-\frac{\sigma_{ij}\varphi^{i}\varphi^{j}}{\upsilon^{2}}+\frac{\varphi^{i}\varphi^{j}}{\upsilon^{2}}\varphi_{ij}
-\frac{\varphi^{i}\varphi^{j}\varphi_{i}\varphi_{j}}{\upsilon^{2}}\right)\\
&=&\frac{1}{u\upsilon}\left(n+\left(-\sigma^{ij}+\frac{\varphi^{i}\varphi^{j}}{\upsilon^{2}}\right)\varphi_{ij}\right),
\end{eqnarray*}
So, together with (\ref{kei}), we can obtain the following equation
\begin{eqnarray} \label{kei2}
\frac{\partial^{2}u}{\partial
t^{2}}=\frac{u\upsilon^{2}}{n+\left(-\sigma^{ij}+\frac{\varphi^{i}\varphi^{j}}{\upsilon^{2}}\right)\varphi_{ij}}
-\left(\frac{\partial^{2}u}{\partial x^{i}\partial
x^{j}}\frac{dx^{i}}{dt}\frac{dx^{j}}{dt}
+2\frac{\partial^{2}u}{\partial x^{i}\partial
t}\frac{dx^{i}}{dt}\right).
\end{eqnarray}

Note that
\begin{eqnarray*}
\frac{\partial \varphi}{\partial t}=\frac{1}{u}\frac{\partial
u}{\partial t},
\end{eqnarray*}
 then together with (\ref{kei2}), we have
\begin{eqnarray} \label{kei3}
\frac{\partial^{2}\varphi}{\partial t^{2}}&=&\frac{1}{u}\frac{\partial^{2}u}{\partial t^{2}}
-\frac{1}{u^{2}}\left(\frac{\partial u}{\partial t}\right)^{2}\nonumber\\
&=&\frac{\upsilon^{2}}{n+\left(-\sigma^{ij}+\frac{\varphi^{i}\varphi^{j}}{\upsilon^{2}}\right)\varphi_{ij}}
-\frac{1}{u}\left(\frac{\partial^{2}u}{\partial x^{i}\partial
x^{j}}\frac{dx^{i}}{dt}\frac{dx^{j}}{dt}
+2\frac{\partial^{2}u}{\partial x^{i}\partial t}\frac{dx^{i}}{dt}\right)-\left(\frac{\partial \varphi}{\partial t}\right)^{2}\nonumber\\
&=&\frac{\upsilon^{2}}{n+\left(-\sigma^{ij}+\frac{\varphi^{i}\varphi^{j}}{\upsilon^{2}}\right)\varphi_{ij}}
-\left[\left(\varphi_{ij}+\varphi_{i}\varphi_{j}\right)\frac{dx^{i}}{dt}\frac{dx^{j}}{dt}
+2\left(\varphi_{it}+\varphi_{i}\varphi_{t}\right)\frac{dx^{i}}{dt}\right]-\left(\frac{\partial
\varphi}{\partial t}\right)^{2}. \qquad
\end{eqnarray}
Let
\begin{eqnarray*}
&&\phi (x, \varphi_{ij}, \varphi_{it}, \varphi_{i}, \varphi_{t},
\varphi):=\frac{\upsilon^{2}}{n+\left(-\sigma^{ij}+\frac{\varphi^{i}\varphi^{j}}{\upsilon^{2}}\right)\varphi_{ij}}
-\nonumber\\
 &&\qquad\qquad \left[\left(\varphi_{ij}+\varphi_{i}\varphi_{j}\right)\frac{dx^{i}}{dt}\frac{dx^{j}}{dt}
+2\left(\varphi_{it}+\varphi_{i}\varphi_{t}\right)\frac{dx^{i}}{dt}\right]-\left(\frac{\partial
\varphi}{\partial t}\right)^{2}.
\end{eqnarray*}
Consider the following equation
\begin{eqnarray}  \label{add-1}
\left\{
\begin{array}{lll}
\frac{\partial^{2}\varphi}{\partial t^{2}}=\phi (x, \varphi_{ij},
\varphi_{it}, \varphi_{i}, \varphi_{t}, \varphi),~~~~\quad\forall
x\in
{\mathbb{S}^{n}},~t>0,\\[2mm]
\frac{\partial\varphi}{\partial t}(\cdot,0)=\varphi_{1}(x),\\[2mm]
 \varphi(\cdot,0)=\varphi_{2}(x), &\quad
\end{array}
\right.
\end{eqnarray}
where $\varphi_{1}(x)$, $\varphi_{2}(x)$ are smooth functions on
$\mathbb{S}^n$.

First, by the standard theory of second-order hyperbolic PDEs, we
have the following conclusion.
\begin{lemma}
Assume that $M_{0}$ given as before (which, of course, is a graph
over $\mathbb{S}^n$) has strictly positive mean curvature $H_{0}\in
C^{\infty}(\mathbb{S}^n)$, and $\varphi_{1}(x)$, $\varphi_{2}(x)$
are given as in (\ref{add-1}). Then the following wave equation
\begin{eqnarray} \label{add-2}
\left\{
\begin{array}{lll}
\frac{\partial^{2}\varphi}{\partial t^{2}}=\Delta
\varphi+\frac{1}{H_{0}},~~~~\quad\forall x\in
{\mathbb{S}^{n}},~t>0,\\[2mm]
\frac{\partial\varphi}{\partial t}(\cdot,0)=\varphi_{1}(x)\\[2mm]
 \varphi(\cdot,0)=\varphi_{2}(x) &\quad
\end{array}
\right.
\end{eqnarray}
has a unique solution $\varphi_{0}\in
C^{\infty}\left(\mathbb{S}^{n}\times[0,T_{1})\right)$ with some
$T_{1}>0$.
\end{lemma}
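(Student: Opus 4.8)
The plan is to treat (\ref{add-2}) as a linear inhomogeneous wave equation on the closed manifold $\mathbb{S}^{n}$, with $\Delta$ the Laplace--Beltrami operator of the round metric $\sigma_{ij}$, a time-independent source $1/H_{0}$ (smooth since $H_{0}\in C^{\infty}(\mathbb{S}^{n})$ is strictly positive), and smooth Cauchy data $\varphi_{1},\varphi_{2}$. Because the equation is linear with smooth, time-independent coefficients, the cleanest route is the spectral (eigenfunction expansion) method, which on a compact manifold produces the solution almost explicitly and renders the regularity transparent.

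First I would invoke the spectral decomposition of $-\Delta$ on $\mathbb{S}^{n}$: there is a discrete spectrum $0=\lambda_{0}<\lambda_{1}\leqslant\lambda_{2}\leqslant\cdots\to\infty$ and a complete $L^{2}$-orthonormal basis of smooth eigenfunctions $\{e_{k}\}$ with $-\Delta e_{k}=\lambda_{k}e_{k}$. Expanding the data and source,
\[
\varphi_{2}=\sum_{k}c_{k}e_{k},\qquad \varphi_{1}=\sum_{k}b_{k}e_{k},\qquad \frac{1}{H_{0}}=\sum_{k}f_{k}e_{k},
\]
the ansatz $\varphi(x,t)=\sum_{k}a_{k}(t)e_{k}(x)$ reduces (\ref{add-2}) to the decoupled family of constant-coefficient ODEs
\[
a_{k}''(t)=-\lambda_{k}a_{k}(t)+f_{k},\qquad a_{k}(0)=c_{k},\quad a_{k}'(0)=b_{k},
\]
whose solutions are elementary: trigonometric in $t$ when $\lambda_{k}>0$, and the quadratic $a_{0}(t)=c_{0}+b_{0}t+\tfrac12 f_{0}t^{2}$ for the zero mode $\lambda_{0}=0$.

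Next I would establish convergence and smoothness of the resulting series. Since $\varphi_{1},\varphi_{2},1/H_{0}\in C^{\infty}(\mathbb{S}^{n})$, elliptic regularity (equivalently, $\|(-\Delta)^{m}f\|_{L^{2}}<\infty$ for every $m$) forces $b_{k},c_{k},f_{k}$ to decay faster than any power of $\lambda_{k}$. The explicit formulae for $a_{k}(t)$ and $a_{k}''(t)$ then show that, on any compact interval $[0,T_{1}]$, the series $\sum_{k}a_{k}(t)e_{k}$ together with all its $t$-derivatives and all term-by-term applications of $(-\Delta)^{m}$ converges in $L^{2}$ uniformly in $t$; hence the sum lies in $C^{\infty}\bigl(\mathbb{S}^{n}\times[0,T_{1})\bigr)$ by the Sobolev embedding theorem and solves (\ref{add-2}) classically. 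Linearity makes this solution in fact global in time, so any $T_{1}>0$ works, which is stronger than the stated conclusion.

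Finally, uniqueness is cleanest via an energy estimate rather than the expansion. If $\varphi,\tilde{\varphi}$ both solve (\ref{add-2}), their difference $w=\varphi-\tilde{\varphi}$ satisfies the homogeneous equation $w_{tt}=\Delta w$ with vanishing Cauchy data; the energy $E(t)=\tfrac12\int_{\mathbb{S}^{n}}\bigl(w_{t}^{2}+|\nabla w|^{2}\bigr)\,d\mu$ obeys $E'(t)=\int_{\mathbb{S}^{n}}\bigl(w_{t}\,\Delta w-(\Delta w)\,w_{t}\bigr)\,d\mu=0$ after integrating by parts on the boundaryless manifold $\mathbb{S}^{n}$, so $E\equiv E(0)=0$ and $w\equiv 0$. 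The only genuinely technical point is the regularity bookkeeping in the third step, namely propagating the rapid decay of $b_{k},c_{k},f_{k}$ through the $t$-dependent factors to all orders; but this is routine, and the constant-coefficient, time-independent nature of both $\Delta$ and the source keeps every commutator trivial.
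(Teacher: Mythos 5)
Your proof is correct. Note, however, that the paper offers no argument at all for this lemma: it simply asserts the conclusion ``by the standard theory of second-order hyperbolic PDEs,'' which implicitly points at the general energy-estimate/Galerkin machinery for linear hyperbolic equations. Your spectral route is a legitimate and in this case particularly clean substitute: because the operator is the fixed Laplace--Beltrami operator of the round sphere and the source $1/H_{0}$ is time-independent, the eigenfunction expansion decouples the problem into explicit ODEs $a_{k}''=-\lambda_{k}a_{k}+f_{k}$, the rapid decay of the Fourier coefficients of smooth data on a closed manifold propagates through the trigonometric solution formulae to every Sobolev norm, and Sobolev embedding gives $C^{\infty}$ regularity; the energy identity on the boundaryless manifold $\mathbb{S}^{n}$ gives uniqueness. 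What your approach buys is an essentially explicit solution and the stronger conclusion that the solution is global in time (any $T_{1}>0$, indeed $T_{1}=\infty$, works), whereas the general theory the paper appeals to would apply equally to variable-coefficient hyperbolic operators --- which is the form actually needed in the paper's subsequent Lemma 2.2, where the linearized operator has nonconstant coefficients and your spectral method would no longer decouple. So your argument is complete and correct for this lemma, but be aware it exploits structure (time-independence and commutativity with $\Delta$) that is special to this auxiliary equation.
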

Next, we want to consider the linearization of (\ref{add-1}) around
$\varphi_{0}$.
\begin{lemma}
Let $\varphi_{0}\in
C^{\infty}\left(\mathbb{S}^{n}\times[0,T_{1})\right)$ be the unique
solution of the wave equation (\ref{add-2}) and $\xi\in
C^{\infty}\left(\mathbb{S}^{n}\times [0,T_{1})\right)$. Then there
exists some $T>0$ such that the linearization of (\ref{add-1})
around $\varphi_{0}$ given by
\begin{eqnarray*}
\left\{
\begin{array}{lll}
L_{\varphi_{0}}\varphi:=\frac{\partial^{2}\varphi}{\partial
t^{2}}-[a^{ij}\varphi_{ij}+b^{it}\varphi_{it}
+c^{i}\varphi_{i}+d^{t}\varphi_{t}+e\varphi]=\xi,~~~~\quad\forall
x\in
{\mathbb{S}^{n}},~t>0,\\[2mm]
\frac{\partial\varphi}{\partial t}(\cdot,0)=\varphi_{1}\\[2mm]
 \varphi(\cdot,0)=\varphi_{2} &\quad
\end{array}
\right.
\end{eqnarray*}
has a unique solution $\varphi\in
C^{\infty}\left(\mathbb{S}^{n}\times[0,T)\right).$
\end{lemma}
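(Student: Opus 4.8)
The plan is to treat $L_{\varphi_{0}}\varphi=\xi$ as a genuine second-order linear Cauchy problem on the closed manifold $\mathbb{S}^{n}$ and to invoke the standard existence and uniqueness theory for such problems, the only nontrivial input being the verification that $L_{\varphi_{0}}$ is (uniformly) strictly hyperbolic. First I would make the coefficients explicit. By construction they are the partial derivatives of $\phi$ with respect to its arguments $\varphi_{ij},\varphi_{it},\varphi_{i},\varphi_{t},\varphi$, frozen along the known function $\varphi_{0}$. Writing $v^{i}=\frac{dx^{i}}{dt}$ for the (given) coordinate velocities and differentiating the quotient $\upsilon^{2}/Q$ with $Q=n+(-\sigma^{ij}+\varphi^{i}\varphi^{j}/\upsilon^{2})\varphi_{ij}=Hu\upsilon$, one finds for the top-order coefficients
\begin{eqnarray*}
a^{ij}=\frac{1}{H^{2}}g^{ij}-v^{i}v^{j},\qquad b^{it}=-2v^{i},
\end{eqnarray*}
while $c^{i},d^{t},e$ are smooth functions of the lower-order data. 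Since $\varphi_{0}\in C^{\infty}(\mathbb{S}^{n}\times[0,T_{1}))$ and since $H_{0}>0$ forces $H>0$, equivalently $Q>0$, on $\mathbb{S}^{n}\times[0,T)$ for some $0<T\leqslant T_{1}$ by continuity, all five coefficients are smooth and bounded there.

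The key step is hyperbolicity. The characteristic polynomial of the principal part in the time covariable $\tau$ (with spatial covariable $\xi$) is
\begin{eqnarray*}
\tau^{2}-b^{it}\xi_{i}\tau-a^{ij}\xi_{i}\xi_{j}=\tau^{2}+2v^{i}\xi_{i}\,\tau-\Big(\tfrac{1}{H^{2}}g^{ij}\xi_{i}\xi_{j}-(v^{i}\xi_{i})^{2}\Big)=0.
\end{eqnarray*}
Its discriminant is exactly
\begin{eqnarray*}
4(v^{i}\xi_{i})^{2}+4\Big(\tfrac{1}{H^{2}}g^{ij}\xi_{i}\xi_{j}-(v^{i}\xi_{i})^{2}\Big)=\frac{4}{H^{2}}\,g^{ij}\xi_{i}\xi_{j},
\end{eqnarray*}
so the quadratic velocity contributions cancel and, because $(g^{ij})$ is positive definite and $H>0$ on $\mathbb{S}^{n}\times[0,T)$, the discriminant is strictly positive for every $\xi\neq0$. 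Hence the two roots $\tau_{\pm}(x,t,\xi)$ are real and distinct, uniformly on $\mathbb{S}^{n}\times[0,T']$ for each $T'<T$; that is, $L_{\varphi_{0}}$ is strictly hyperbolic with respect to $t$. This cancellation, which survives even when $a^{ij}$ itself fails to be positive definite for large initial velocity, is the crux of the argument and the step I expect to be most delicate, since it is precisely where the geometric structure of the flow enters.

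With strict hyperbolicity in hand the remainder is standard. One may rewrite the scalar equation as a first-order system in $(\varphi_{t},\varphi_{1},\dots,\varphi_{n},\varphi)$ which, by the distinct-real-roots structure, admits a symmetrizer, and then appeal to the classical theory of linear (strictly) hyperbolic equations with smooth coefficients over the \emph{compact} manifold $\mathbb{S}^{n}$: Sobolev energy estimates together with a Galerkin or semigroup construction yield, for the smooth data $\xi,\varphi_{1},\varphi_{2}$, a unique solution $\varphi\in C^{\infty}(\mathbb{S}^{n}\times[0,T))$. Because $\mathbb{S}^{n}$ is closed there are no boundary terms in the energy identity; uniqueness follows from a Gr\"onwall argument applied to the energy of the difference of two solutions, and the $C^{\infty}$ regularity follows by differentiating the equation and bootstrapping, the coefficients and data being smooth. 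Linearity means no smallness hypothesis is needed, so the solution persists on the whole interval $[0,T)$ on which the coefficients remain smooth.
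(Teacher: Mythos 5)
Your proposal is correct and follows essentially the same route as the paper: compute the linearized coefficients $a^{ij}=\frac{1}{H^{2}}g^{ij}-\frac{dx^{i}}{dt}\frac{dx^{j}}{dt}$, $b^{it}=-2\frac{dx^{i}}{dt}$, observe that the velocity contributions cancel in the principal part (your discriminant computation is algebraically the same as the paper's congruence of the coefficient matrix to $\mathrm{diag}(-1,\tfrac{1}{H^{2}}g^{ij})$, i.e., completing the square in the time covariable), use $H_{0}>0$ and continuity to get uniform hyperbolicity on a short time interval, and then invoke the standard theory of second-order linear hyperbolic equations on the compact manifold $\mathbb{S}^{n}$.
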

\begin{proof}
Let $\varphi_{\varepsilon}:=\varphi_{0}+\varepsilon\varphi$. We
obtain the linearized operator $L_{\varphi_{0}}$ of
$\frac{\partial^{2}}{\partial t^{2}}-\phi$ around $\varphi_{0}$ as
\begin{eqnarray*}
L_{\varphi_{0}}\varphi:&=&\frac{d}{d\varepsilon}\Big{|}_{\varepsilon=0}
\left(\frac{\partial^{2}\varphi_{\varepsilon}}{\partial t^{2}}-
\phi(x,(\varphi_{\varepsilon})_{ij},(\varphi_{\varepsilon})_{it},(\varphi_{\varepsilon})_{i},
(\varphi_{\varepsilon})_{t},\varphi_{\varepsilon})\right)\\
&=&\frac{\partial^{2}\varphi}{\partial
t^{2}}-\Big{(}\frac{\partial\phi}{\partial(\varphi_{\varepsilon})_{ij}}
\frac{d(\varphi_{\varepsilon})_{ij}}{d\varepsilon}+\frac{\partial\phi}{\partial(\varphi_{\varepsilon})_{it}}
\frac{d(\varphi_{\varepsilon})_{it}}{d\varepsilon}\\
&&+\frac{\partial\phi}{\partial(\varphi_{\varepsilon})_{i}}
\frac{d(\varphi_{\varepsilon})_{i}}{d\varepsilon}+\frac{\partial\phi}{\partial(\varphi_{\varepsilon})_{t}}
\frac{d(\varphi_{\varepsilon})_{t}}{d\varepsilon}+\frac{\partial\phi}{\partial(\varphi_{\varepsilon})}
\frac{d(\varphi_{\varepsilon})}{d\varepsilon}\Big{)}|_{\varepsilon=0}\\
&=&\frac{\partial^{2}\varphi}{\partial
t^{2}}-\left(\frac{\partial\phi}{\partial(\varphi_{0})_{ij}}\varphi_{ij}+
\frac{\partial\phi}{\partial(\varphi_{0})_{it}}\varphi_{it}
+\frac{\partial\phi}{\partial(\varphi_{0})_{i}}\varphi_{i}+\frac{\partial\phi}{\partial(\varphi_{0})_{t}}\varphi_{t}
+\frac{\partial\phi}{\partial(\varphi_{0})}\varphi\right).
\end{eqnarray*}
So, we have
$$a^{ij}:=\frac{g^{ij}}{H^{2}}(\cdot,(\varphi_{0})_{ij}, (\varphi_{0})_{it}, (\varphi_{0})_{i}, (\varphi_{0})_{t},
(\varphi_{0}))-\frac{dx^{i}}{dt}\frac{dx^{j}}{dt},$$
$$b^{it}:=-2\frac{dx^{i}}{dt},$$
and the first equation in (\ref{add-1}) has the form
\begin{eqnarray*}
\frac{\partial^{2}\varphi}{\partial
t^{2}}=a^{ij}\varphi_{ij}+b^{it}\varphi_{it}+I(x,\varphi_{i},\varphi_{t},\varphi),
\end{eqnarray*}
 where the last term $I(x,\varphi_{i},\varphi_{t},\varphi)$ depends on $x,\varphi_{i},\varphi_{t},\varphi$. Consider the coefficient matrix of terms involving
second-order derivatives of $\varphi$, and then we have
\begin{gather*}
\begin{pmatrix}
-1 & -\frac{dx^{1}}{dt} & \cdots & -\frac{dx^{n}}{dt}\\
-\frac{dx^{1}}{dt} & \frac{1}{H^{2}}g^{11}-\frac{dx^{1}}{dt}\frac{dx^{1}}{dt} & \cdots & \frac{1}{H^{2}}g^{1n}-\frac{dx^{1}}{dt}\frac{dx^{n}}{dt}\\
\vdots & \vdots & \vdots & \vdots\\
-\frac{dx^{n}}{dt} & \frac{1}{H^{2}}g^{n1}-\frac{dx^{n}}{dt}\frac{dx^{1}}{dt} &\cdots & \frac{1}{H^{2}}g^{nn}-\frac{dx^{n}}{dt}\frac{dx^{n}}{dt}
\end{pmatrix}
\end{gather*}
which, by a suitable linear transformation, becomes
\begin{gather*}
\begin{pmatrix}
-1 & 0 & \cdots & 0\\
0 & \frac{1}{H^{2}}g^{11} & \cdots & \frac{1}{H^{2}}g^{1n}\\
\vdots & \vdots & \vdots & \vdots\\
0 & \frac{1}{H^{2}}g^{n1} &\cdots & \frac{1}{H^{2}}g^{nn}
\end{pmatrix}
.
\end{gather*}
At $t=0$, since $H_{0}$ is strictly positive, thus $L_{\varphi_{0}}$
is uniformly hyperbolic in some small time interval $[0,\ell)$.
Therefore, the theory of second-order linear hyperbolic PDEs yields
the result.
\end{proof}

Therefore, we have the following short-time existence.
\begin{theorem} \label{maintheorem} (Local existence and uniqueness) If the initial hypersurface $M_0$ is a compact, mean convex, star-shaped
smooth hypersurface of $\mathbb{R}^{n+1}$ ($n\geqslant2$), which is
given as an embedding
\begin{eqnarray*}
X_{0}:\mathbb{S}^{n}\rightarrow\mathbb{R}^{n+1},
\end{eqnarray*}
then there exists a constant $T_{\rm{max}}>0$ such that the initial
value problem (IVP for short)
\begin{eqnarray}  \label{IHMCF}
\left\{
\begin{array}{ll}
{\frac{d^{2}}{d
t^{2}}X(x,t)=H^{-1}(x,t)\vec{\nu}(x,t),~~~~\quad\forall x\in
{\mathbb{S}^{n}},~t>0,}\\[2mm]
\frac{d X}{d t}(x,0)=X_{1}(x),\\[2mm]
 X(x,0)=X_{0}(x), &\quad
\end{array}
\right.
\end{eqnarray}
 has a unique smooth solution $X(x, t)$ on $\mathbb{S}^{n}
\times [0,T_{max})$, where $X_{1}(x)$ is a smooth vector-valued
function on $\mathbb{S}^n$.
\end{theorem}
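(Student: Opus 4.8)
The plan is to reduce the geometric initial value problem (\ref{IHMCF}) to the scalar quasilinear hyperbolic equation (\ref{add-1}) and then solve the latter by a linearization-and-iteration scheme resting on the two preceding lemmas. Since $M_0$ is star-shaped, by continuity each evolving hypersurface $M_t$ remains star-shaped on a short time interval, so it can be written as a radial graph $r=u(x,t)$ over $\mathbb{S}^n$. The computations already carried out in this section show that, under this graphical representation, the system (\ref{IHMCF}) is equivalent to the single equation (\ref{add-1}) for $\varphi=\log u$, with initial data $\varphi(\cdot,0)=\varphi_2=\log u_0$ read off from $X_0$ and $\partial_t\varphi(\cdot,0)=\varphi_1$ determined by the radial component of the prescribed velocity $X_1$. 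Hence it suffices to produce a unique smooth solution of (\ref{add-1}) on a maximal interval $[0,T_{\max})$; the tangential components $x^i(t)$ are then recovered by integrating the second ODE in (\ref{kei}), and the embedding $X(\cdot,t)$ is reassembled from $u=e^{\varphi}$.

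To solve (\ref{add-1}), I would set up a Picard-type iteration whose linear steps are exactly the problems handled in Lemma 2.2. Start from $\varphi^{(0)}:=\varphi_0$, the solution of the model wave equation (\ref{add-2}) furnished by Lemma 2.1, and inductively define $\varphi^{(m+1)}$ as the unique solution of the linearized problem $L_{\varphi^{(m)}}\varphi^{(m+1)}=\xi^{(m)}$ with the same initial data $(\varphi_1,\varphi_2)$, where the source $\xi^{(m)}$ is assembled so that a fixed point solves (\ref{add-1}). Each such step is solvable and unique by the argument of Lemma 2.2 (with $\varphi^{(m)}$ in place of $\varphi_0$), because the coefficient matrix of the principal part computed there is uniformly hyperbolic near $t=0$: the strict positivity of $H_0$ (mean convexity) makes the spatial block $\frac{1}{H^2}g^{ij}$ positive definite, and this persists on a short interval by continuity.

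The hard part will be to show that the iterates $\{\varphi^{(m)}\}$ converge. This requires a priori energy estimates for the linearized operators $L_{\varphi^{(m)}}$ that are uniform in $m$ on a common subinterval. Concretely, I would fix a high Sobolev order $s$, verify that the hyperbolicity constant and the relevant norms of the coefficients $a^{ij},b^{it},\dots$ stay uniformly bounded for small $t$, and then run standard Sobolev energy estimates (differentiating the equation, pairing against $\partial_t\varphi$, and using Moser-type product and composition inequalities to control the nonlinearity $\phi$) to bound $\|\varphi^{(m)}\|_{H^s}$ uniformly and to prove that consecutive differences contract in the lower norm $H^{s-1}$ on a possibly shorter interval. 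This yields a fixed point $\varphi\in H^s$ solving (\ref{add-1}). An equivalent route is the inverse function theorem: the map $\mathcal{F}(\varphi)=\partial_t^2\varphi-\phi(x,\varphi_{ij},\dots)$ has Fr\'echet derivative $L_{\varphi_0}$ at $\varphi_0$, which is invertible by Lemma 2.2, so $\mathcal{F}=0$ is locally solvable near $\varphi_0$. Controlling the loss of derivatives intrinsic to quasilinear hyperbolic problems and pinning down the largest interval on which uniform hyperbolicity and the uniform coefficient bounds survive is the crux of the argument.

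Finally, I would upgrade regularity and translate back to geometry. Given smooth initial data and smooth coefficients, standard bootstrapping for hyperbolic equations promotes the solution to $\varphi\in C^\infty(\mathbb{S}^n\times[0,T_{\max}))$; mean convexity $H>0$ persists for short time by continuity, so the denominators appearing in $\phi$ and in (\ref{kei}) stay bounded away from zero and the graphical reduction remains valid throughout. Setting $u=e^{\varphi}$ and integrating the tangential ODEs then recovers a smooth embedding $X(\cdot,t)$ solving (\ref{IHMCF}). Uniqueness follows from the uniqueness in Lemma 2.2 together with the energy estimate applied to the difference of two solutions, which must vanish under identical initial data. This produces the constant $T_{\max}>0$ and the unique smooth solution on $\mathbb{S}^n\times[0,T_{\max})$ asserted in the theorem.
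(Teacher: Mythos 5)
Your proposal follows essentially the same route as the paper: use star-shapedness to write $M_t$ as a radial graph over $\mathbb{S}^n$, reduce (\ref{IHMCF}) to the scalar quasilinear hyperbolic equation (\ref{add-1}) for $\varphi=\log u$, linearize around the solution $\varphi_0$ of the auxiliary wave equation (\ref{add-2}), and invoke uniform hyperbolicity (coming from $H_0>0$) together with linear hyperbolic theory to conclude. The only difference is that you spell out the Picard iteration, energy estimates, and bootstrapping that the paper compresses into the single word ``Therefore'' after Lemmas 2.1 and 2.2, which is a reasonable elaboration rather than a different argument.
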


\begin{remark} \label{remark2-1}
\rm{ (1) If the IVP (\ref{IHMCF}) is replaced by
\begin{eqnarray} \label{IHMCF-1}
\left\{
\begin{array}{ll}
{\frac{d^{2}}{d
t^{2}}X(x,t)=H^{-1}(x,t)\vec{\nu}(x,t)+c(t)X(x,t),~~~~\quad\forall
x\in
{\mathbb{S}^{n}},~t>0,}\\[2mm]
\frac{d X}{d t}(x,0)=X_{1}(x),\\[2mm]
 X(x,0)=X_{0}(x), &\quad
\end{array}
\right.
\end{eqnarray}
with $c(t)$ a bounded continuous function w.r.t. to $t$, and other
assumptions are the same to those in Theorem \ref{maintheorem}, then
one can also get the local existence and uniqueness of the forced
HIMCF (\ref{IHMCF-1}) since the first evolution equation in
(\ref{IHMCF-1}) is a second-order hyperbolic PDE by nearly the same
argument in this section. Although we only add a forcing term
$c(t)X(x,t)$ in direction of the position vector, the convergent
situation of (\ref{IHMCF-1}) will be much different from
(\ref{IHMCF}), which can be seen from examples shown in Section 3
and Remark \ref{remark3-1}.\\
(2) Let $F$ be a symmetric, positive, 1-homogeneous function defined
on an open cone $\Gamma$ of $\mathbb{R}^{n}$ with vertex in the
origin, which contains the positive diagonal, i.e., all $n$-tuples
of the form $(\lambda,\cdots,\lambda)$, $\lambda>0$. Assume that
$F\in C^{0}(\overline\Gamma)\cap C^{2}(\Gamma)$ is monotone,
concave, i.e.,
\begin{eqnarray*}
\frac{\partial F}{\partial\lambda^i}>0, \qquad i=1,2,\cdots,n,
~~\rm{in}~\Gamma,\\
\frac{\partial^{2}F}{\partial\lambda^i\partial\lambda^j}\leqslant0,
\qquad\qquad \qquad
\end{eqnarray*}
and that
\begin{eqnarray*}
F=0 ~~\rm{on}~\partial\Gamma.
\end{eqnarray*}
We also use the normalization convention $F(1,\cdots,1)=n+1$. Based
on Gerhardt \cite{cg} on the ICF in $\mathbb{R}^{n+1}$, we can
consider the following IVP
\begin{eqnarray} \label{IHMCF-2}
\left\{
\begin{array}{ll}
{\frac{d^{2}}{d
t^{2}}X(x,t)=F^{-1}(x,t)\vec{\nu}(x,t),~~~~\quad\forall x\in
{\mathbb{S}^{n}},~t>0,}\\[2mm]
\frac{d X}{d t}(x,0)=X_{1}(x),\\[2mm]
 X(x,0)=X_{0}(x), &\quad
\end{array}
\right.
\end{eqnarray}
where $F$ defined on $\Gamma$ is a function of principle curvatures
described as above, and other assumptions are the same to those in
Theorem \ref{maintheorem}. Clearly, the IVP (\ref{IHMCF}) is a
special case of the IVP (\ref{IHMCF-2}), and the first evolution
equation in (\ref{IHMCF-2}) is a hyperbolic version of the ICF,
which leads to the fact that we call it \emph{hyperbolic inverse
curvature flow} (HICF for short). We \textbf{claim} that the
hyperbolic flow (\ref{IHMCF-2}) also has a unique smooth solution
$X(x, t)$ on $\mathbb{S}^{n} \times [0,T_2)$ with some $T_{2}>0$. As
the argument in Section 2, together with the first evolution
equation of (\ref{IHMCF-2}), one can obtain the following evolution
equation
\begin{eqnarray} \label{ev-varphi}
\frac{\partial^{2}\varphi}{\partial t^{2}}=\frac{\upsilon}{uF}
-\left[\left(\varphi_{ij}+\varphi_{i}\varphi_{j}\right)\frac{dx^{i}}{dt}\frac{dx^{j}}{dt}
+2\left(\varphi_{it}+\varphi_{i}\varphi_{t}\right)\frac{dx^{i}}{dt}\right]-\left(\frac{\partial
\varphi}{\partial t}\right)^{2}.
\end{eqnarray}
Denote by $\mathcal{M}(\Gamma)$ the class of all real $(n\times
n)$-matrices whose eigenvalues  belong to $\Gamma$. Then one can
define a function $\mathcal{F}$ on $\mathcal{M}(\Gamma)$  as
 \begin{eqnarray*}
 \mathcal{F}(a^{ij})=F(\lambda^i),
 \end{eqnarray*}
where the $(\lambda^i)$ are eigenvalues of the matrix $(a^{ij})$. It
has been proven in \cite{cns} the monotonicity and concavity of $F$
now take the form
\begin{eqnarray} \label{2-m}
\mathcal{F}_{ij}=\frac{\partial\mathcal{F}}{\partial a^{ij}} \qquad
\rm{is~positive~definite},
\end{eqnarray}
and
\begin{eqnarray} \label{2-c}
\mathcal{F}_{ij,rs}=\frac{\partial^{2}\mathcal{F}}{\partial
a^{ij}\partial a^{rs}} \qquad \rm{is~negative~semidefinite}.
\end{eqnarray}
 Consider the tensor
\begin{eqnarray*}
h^{i}_{j}&=&g^{ik}h_{kj}=
\frac{1}{u\upsilon}\left[\delta^{i}_{j}+\left(-\sigma^{ik}+\frac{\varphi^{i}\varphi^{k}}{\upsilon^{2}}\right)\varphi_{kj}\right].
\end{eqnarray*}
Define the symmetric tensor
\begin{eqnarray*}
\widehat{h}_{ij}=\frac{1}{2}(\widetilde{\sigma}_{ik}h^{k}_{j}+\widetilde{\sigma}_{jk}h^{k}_{i}),
\end{eqnarray*}
where
\begin{eqnarray*}
\widetilde{\sigma}_{ij}=\sigma_{ij}+\varphi_{i}\varphi_{j}.
\end{eqnarray*}
Set
\begin{eqnarray*}
\widetilde{h}_{ij}:=\frac{u}{\upsilon}\widehat{h}_{ij}=\upsilon^{-2}\left(\sigma_{ij}-\varphi_{ij}+\varphi_{i}\varphi_{j}\right),
\end{eqnarray*}
then, together with (\ref{ev-varphi}), we have
\begin{eqnarray*}
\frac{\partial^{2}\varphi}{\partial
t^{2}}=\frac{1}{\mathcal{F}(\widetilde{h}_{ij})}
-\left[\left(\varphi_{ij}+\varphi_{i}\varphi_{j}\right)\frac{dx^{i}}{dt}\frac{dx^{j}}{dt}
+2\left(\varphi_{it}+\varphi_{i}\varphi_{t}\right)\frac{dx^{i}}{dt}\right]-\left(\frac{\partial
\varphi}{\partial t}\right)^{2},
\end{eqnarray*}
where the nonlinearity $\mathcal{F}$ only depends on $D\varphi$ and
$D^{2}\varphi$.

Now, we do the linearization process. Set
 $$Q(\varphi, D\varphi, D^{2}\varphi):=\frac{1}{\mathcal{F}(\widetilde{h}_{ij})}
-\left[\left(\varphi_{ij}+\varphi_{i}\varphi_{j}\right)\frac{dx^{i}}{dt}\frac{dx^{j}}{dt}
+2\left(\varphi_{it}+\varphi_{i}\varphi_{t}\right)\frac{dx^{i}}{dt}\right]-\left(\frac{\partial
\varphi}{\partial t}\right)^{2},$$
 then one can obtain
\begin{eqnarray*}
Q^{ij}&=&\frac{\partial Q}{\partial \varphi_{ij}}\\
&=&-\frac{1}{\mathcal{F}^{2}(\widetilde{h}_{ij})}\frac{\partial
\mathcal{F}}{\partial \widetilde{h}_{ij}}\frac{\partial
\widetilde{h}_{ij}}{\partial \varphi_{ij}}
-\frac{dx^{i}}{dt}\frac{dx^{j}}{dt}\\
&=&\frac{1}{\upsilon^{2}\mathcal{F}^{2}}\frac{\partial F}{\partial
\widetilde{h}_{ij}}-\frac{dx^{i}}{dt}\frac{dx^{j}}{dt}.
\end{eqnarray*}
Therefore, we have
 \begin{eqnarray*}
 \frac{\partial^{2}\varphi}{\partial
t^{2}}=Q^{ij}\varphi_{ij}-2\frac{dx^{i}}{dt}\varphi_{it} +I(x,
\varphi_{i}, \varphi_{t}, \varphi),
 \end{eqnarray*}
where the last term $I(x, \varphi_{i}, \varphi_{t}, \varphi)$ only
depends on $x, \varphi_{i}, \varphi_{t}, \varphi$. The coefficient
matrix of terms involving second-order derivatives of $\varphi$ in
the above evolution equation is
\begin{gather*}
\begin{pmatrix}
-1 & -\frac{dx^{1}}{dt} & \cdots & -\frac{dx^{n}}{dt}\\
-\frac{dx^{1}}{dt} & \frac{1}{\upsilon^{2}\mathcal{F}^{2}}\frac{\partial \mathcal{F}}{\partial \widetilde{h}_{11}}-\frac{dx^{1}}{dt}\frac{dx^{1}}{dt} & \cdots & \frac{1}{\upsilon^{2}\mathcal{F}^{2}}\frac{\partial \mathcal{F}}{\partial \widetilde{h}_{1n}}-\frac{dx^{1}}{dt}\frac{dx^{n}}{dt}\\
\vdots & \vdots & \vdots & \vdots\\
-\frac{dx^{n}}{dt} &
\frac{1}{\upsilon^{2}\mathcal{F}^{2}}\frac{\partial
\mathcal{F}}{\partial
\widetilde{h}_{n1}}-\frac{dx^{n}}{dt}\frac{dx^{1}}{dt} &\cdots &
\frac{1}{\upsilon^{2}\mathcal{F}^{2}}\frac{\partial\mathcal{F}}{\partial
\widetilde{h}_{nn}}-\frac{dx^{n}}{dt}\frac{dx^{n}}{dt}
\end{pmatrix}
\end{gather*}
which, by a suitable linear transformation, becomes
\begin{gather} \label{NM}
\begin{pmatrix}
-1 & 0 & \cdots & 0\\
0 & \frac{1}{\upsilon^{2}\mathcal{F}^{2}}\frac{\partial \mathcal{F}}{\partial \widetilde{h}_{11}} & \cdots & \frac{1}{\upsilon^{2}\mathcal{F}^{2}}\frac{\partial \mathcal{F}}{\partial \widetilde{h}_{1n}}\\
\vdots & \vdots & \vdots & \vdots\\
0 & \frac{1}{\upsilon^{2}\mathcal{F}^{2}}\frac{\partial
\mathcal{F}}{\partial \widetilde{h}_{n1}} &\cdots &
\frac{1}{\upsilon^{2}\mathcal{F}^{2}}\frac{\partial\mathcal{F}}{\partial
\widetilde{h}_{nn}}
\end{pmatrix}
,
\end{gather}
which, by (\ref{2-m}) and (\ref{2-c}), implies that the matrix
(\ref{NM}) is negative definite. So, the equation is a second-order
linear hyperbolic PDE. Our claim follows by the standard theory of
second-order linear hyperbolic PDEs.\\
(3) Although we can also get the short-time existence of the IVP
(\ref{IHMCF-2}), in this paper we mainly discuss the IVP
(\ref{IHMCF}) since if the initial hypersurface $M_0$ is more
special (e.g., sphere, cylinder), the evolution equation of the
flow, which in general is a second-order hyperbolic PDE, degenerates
into a second-order ordinary differential equation (ODE for short)
and then the convergent situation of the evolving hypersurfaces can
be easily known by directly checking the explicit solution to the
ODE (for details, see examples shown in Section 3).

}
\end{remark}

\section{Examples}

\renewcommand{\thesection}{\arabic{section}}
\renewcommand{\theequation}{\thesection.\arabic{equation}}
\setcounter{equation}{0}

In order to possibly understand the convergence of HIMCF
(\ref{IHMCF}) well, we would like to consider the following two
interesting examples in this section.

\begin{example}  \label{ex1}
\rm{ Consider a family of spheres in $\mathbb{R}^3$
\begin{eqnarray*}
X(x,t)=r(t)(\cos\alpha\cos\beta,\cos\alpha\sin\beta,\sin\alpha),
\end{eqnarray*}
where $\alpha\in\left[-\frac{\pi}{2},\frac{\pi}{2}\right]$,
$\beta\in[0,2\pi]$. By straightforward computation, the induced
metric and the second fundamental form are given as follows
\begin{eqnarray*}
g_{11}=r^2, \qquad g_{22}=r^{2}\cos^{2}\alpha, \qquad
g_{12}=g_{21}=0,
\end{eqnarray*}
and
\begin{eqnarray*}
h_{11}=r, \qquad h_{22}=r\cos^{2}\alpha, \qquad h_{12}=h_{21}=0,
\end{eqnarray*}
which implies the mean curvature is
\begin{eqnarray*}
H=g^{ij}h_{ij}=\frac{2}{r}.
\end{eqnarray*}
Besides, the outward unit normal vector of each $X(\cdot,t)$ is
$\vec{v}=(\cos\alpha\cos\beta,\cos\alpha\sin\beta,\sin\alpha)$.
Therefore, in this setting, the HIMCF (\ref{IHMCF}) becomes
\begin{equation}\label{ex11}
\left\{\begin{array}{ll}
r_{tt}=\frac{r}{2},\\[2mm]
r(0)=r_{0}>0,~~~~r_{t}(0)=r_{1},&
 \end{array}\right.
\end{equation}
with
$X_{1}(x)=r_{1}(\cos\alpha\cos\beta,\cos\alpha\sin\beta,\sin\alpha)$
for some constant $r_{1}$. Solving (\ref{ex11}) directly yields
\begin{eqnarray*}
r(t)=\frac{r_{0}+\sqrt{2}r_{1}}{2}e^{\frac{\sqrt{2}}{2}t}+\frac{r_{0}-\sqrt{2}r_{1}}{2}e^{-\frac{\sqrt{2}}{2}t}
\end{eqnarray*}
on $[0,T_{\rm{max}})$ for some $0<T_{\rm{max}}\leqslant\infty$. It
is not difficult to know that
\begin{itemize}
\item if $r_{0}+\sqrt{2}r_{1}>0$, then $T_{\rm{max}}=\infty$ (i.e., the flow exists for all the time).
Moreover, if furthermore, $r_{0}-\sqrt{2}r_{1}\leqslant0$, the
evolving spheres \emph{expand} exponentially to the infinity, and if
furthermore, $r_{0}-\sqrt{2}r_{1}>0$, then the evolving spheres
\emph{converge} first for a while and then \emph{expand}
exponentially to the infinity;

\item if $r_{0}+\sqrt{2}r_{1}=0$, then
$r(t)=\sqrt{2}r_{0}e^{-\frac{\sqrt{2}}{2}t}$, which implies
$T_{\rm{max}}=\infty$ and the evolving spheres \emph{converge} to a
single point as time tends to infinity;

\item if $r_{0}+\sqrt{2}r_{1}<0$, then
$T_{\rm{max}}=\frac{\sqrt{2}}{2}\ln\left(\frac{\sqrt{2}r_{1}-r_{0}}{\sqrt{2}r_{1}+r_{0}}\right)$
and the evolving spheres \emph{converge} to a single point as
$t\rightarrow T_{\rm{max}}$.
\end{itemize}
From the above argument, at least we can get an impression that the
convergent situation of the HIMCF (\ref{IHMCF}) is much complicated
and has close relation with the initial data.
 }
\end{example}

Based on Example \ref{ex1}, one can consider the following
high-dimensional case.

\begin{example} \label{ex2}
\rm{  Consider a family of spheres in $\mathbb{R}^{n+1}$
($n\geqslant2$)
\begin{eqnarray*}
X(x,t)=r(t)(\cos\theta_{1},\sin\theta_{1}\cos\theta_{2},\sin\theta_{1}\sin\theta_{2}\cos\theta_{3},\cdots,\\
\sin\theta_{1}\sin\theta_{2}\sin\theta_{3}\cdots\sin\theta_{n-1}\cos\theta_{n},\sin\theta_{1}\sin\theta_{2}\sin\theta_{3}\cdots\sin\theta_{n-1}\sin\theta_{n}),
\end{eqnarray*}
where $\theta_{1}\in\left[-\frac{\pi}{2},\frac{\pi}{2}\right]$,
$\theta_{\beta}\in[0,2\pi]$ for $\beta=2,3,\cdots,n$. By
straightforward computation, the induced metric and the second
fundamental form are given as follows
\begin{eqnarray*}
g_{11}=r^{2}\cos^{2}\alpha, \qquad
g_{22}=g_{33}=\cdots=g_{nn}=r^{2}, \qquad
g_{ij}=g_{ji}=0~~{\mathrm{for}}~~i\neq j,
\end{eqnarray*}
and
\begin{eqnarray*}
h_{11}=r\cos^{2}\alpha, \qquad h_{22}=h_{33}=\cdots=h_{nn}=r, \qquad
h_{ij}=h_{ji}=0~~{\mathrm{for}}~~i\neq j,
\end{eqnarray*}
which implies the mean curvature is
\begin{eqnarray*}
H=g^{ij}h_{ij}=\frac{n}{r}.
\end{eqnarray*}
Similar to Example \ref{ex1}, in this setting, the HIMCF
(\ref{IHMCF}) becomes
\begin{equation}\label{ex22}
\left\{\begin{array}{ll}
r_{tt}=\frac{r}{n},\\[2mm]
r(0)=r_{0}>0,~~~~r_{t}(0)=r_{1},&
 \end{array}\right.
\end{equation}
for some constant $r_{1}$. Solving (\ref{ex22}) directly yields
\begin{eqnarray*}
r(t)=\frac{r_{0}+\sqrt{n}r_{1}}{2}e^{\frac{\sqrt{n}}{n}t}+\frac{r_{0}-\sqrt{n}r_{1}}{2}e^{-\frac{\sqrt{n}}{n}t}
\end{eqnarray*}
on $[0,T_{\rm{max}})$ for some $0<T_{\rm{max}}\leqslant\infty$, and
then we have
\begin{itemize}
\item if $r_{0}+\sqrt{n}r_{1}>0$, then $T_{\rm{max}}=\infty$ (i.e., the flow exists for all the time).
Moreover, if furthermore, $r_{0}-\sqrt{n}r_{1}\leqslant0$, the
evolving spheres \emph{expand} exponentially to the infinity, and if
furthermore, $r_{0}-\sqrt{n}r_{1}>0$, then the evolving spheres
\emph{converge} first for a while and then \emph{expand}
exponentially to the infinity;

\item if $r_{0}+\sqrt{n}r_{1}=0$, then
$r(t)=\sqrt{n}r_{0}e^{-\frac{\sqrt{n}}{n}t}$, which implies
$T_{\rm{max}}=\infty$ and the evolving spheres \emph{converge} to a
single point as time tends to infinity;

\item if $r_{0}+\sqrt{n}r_{1}<0$, then
$T_{\rm{max}}=\frac{\sqrt{n}}{n}\ln\left(\frac{\sqrt{n}r_{1}-r_{0}}{\sqrt{n}r_{1}+r_{0}}\right)$
and the evolving spheres \emph{converge} to a single point as
$t\rightarrow T_{\rm{max}}$.
\end{itemize}
}
\end{example}

\begin{example} \label{ex3}
\rm{ Now, we would like to consider cylinder solution for the HIMCF
(\ref{IHMCF}) in $\mathbb{R}^3$ which has the following form
\begin{eqnarray*}
X(x,t)=(r(t)\cos\alpha,r(t)\sin\alpha,\rho),
\end{eqnarray*}
where $\alpha\in[0,2\pi]$, $\rho\in[0,\rho_0]$ for some
$\rho_{0}>0$. Clearly, the induced metric and the second fundamental
form can be easily computed as follows
\begin{eqnarray*}
g_{11}=r^2, \qquad g_{22}=r^{2}, \qquad g_{12}=g_{21}=0,
\end{eqnarray*}
and
\begin{eqnarray*}
h_{11}=r, \qquad h_{22}=0, \qquad h_{12}=h_{21}=0,
\end{eqnarray*}
which implies the mean curvature is
\begin{eqnarray*}
H=g^{ij}h_{ij}=\frac{1}{r}.
\end{eqnarray*}
Besides, the outward unit normal vector od each $X(\cdot,t)$ is
$\vec{v}=(\cos\alpha,\sin\alpha,0)$. Therefore, in this setting, the
HIMCF (\ref{IHMCF}) becomes
\begin{equation}\label{ex33}
\left\{\begin{array}{ll}
r_{tt}=r,\\[2mm]
r(0)=r_{0}>0,~~~~r_{t}(0)=r_{1},&
 \end{array}\right.
\end{equation}
with $X_{1}(x)=(r_{1}\cos\alpha,r_{1}\sin\alpha,\rho)$ for some
constant $r_{1}$. Solving (\ref{ex33}) directly yields
\begin{eqnarray*}
r(t)=\frac{r_{0}+r_{1}}{2}e^{t}+\frac{r_{0}-r_{1}}{2}e^{-t}
\end{eqnarray*}
on $[0,T_{\rm{max}})$ for some $0<T_{\rm{max}}\leqslant\infty$. It
is not difficult to know that
\begin{itemize}
\item if $r_{0}+r_{1}>0$, then $T_{\rm{max}}=\infty$ (i.e., the flow exists for all the
time). Moreover, if furthermore, $r_{0}-r_{1}\leqslant0$, the
evolving cylinders \emph{expand} exponentially to the infinity, and
if furthermore, $r_{0}-r_{1}>0$, then the evolving cylinders
\emph{converge} first for a while and then \emph{expand}
exponentially to the infinity;

\item if $r_{0}+r_{1}=0$, then
$r(t)=r_{0}e^{-t}$, which implies $T_{\rm{max}}=\infty$ and the
evolving cylinders \emph{converge} to \emph{a straight line} as time
tends to infinity;

\item if $r_{0}+r_{1}<0$, then
$T_{\rm{max}}=\ln\left(\frac{r_{1}-r_{0}}{r_{0}+r_{1}}\right)$ and
the evolving cylinders \emph{converge} to \emph{a straight line} as
$t\rightarrow T_{\rm{max}}$.
\end{itemize}
}
\end{example}

Of course, as shown in Example \ref{ex2}, one can also consider the
high-dimensional case of Example \ref{ex3}, i.e., the generalized
cylinder solutions to the HIMCF (\ref{IHMCF}). However, through a
simple calculation, one can easily find that, similar to the sphere
case, there is no obvious difference between Example \ref{ex3} and
its high-dimensional version.

\begin{remark} \label{remark3-1}
\rm{ If the HIMCF (\ref{IHMCF}) is replaced by the forced HIMCF
(\ref{IHMCF-1}) in examples shown above, then the convergent
situation will be more complicated. For instance, if the replacement
has been made in Example \ref{ex1}, then (\ref{ex11}) will become
\begin{equation*}
\left\{\begin{array}{ll}
r_{tt}=\frac{r}{2}+c(t)r,\\[2mm]
r(0)=r_{0}>0,~~~~r_{t}(0)=r_{1}.&
 \end{array}\right.
\end{equation*}
Denote the solution to the above IVP by $r(t)$. Since $c(t)$ is
bounded continuous, there exist $c_{-}$, $c^{+}$ such that
$c^{-}\leqslant c(t)\leqslant c^{+}$. Consider the following IVPs
\begin{equation*}
\left\{\begin{array}{ll}
r_{tt}=\frac{r}{2}+c^{-}r,\\[2mm]
r(0)=r_{0}>0,~~~~r_{t}(0)=r_{1},&
 \end{array}\right.
\end{equation*}
and
\begin{equation*}
\left\{\begin{array}{ll}
r_{tt}=\frac{r}{2}+c^{+}r,\\[2mm]
r(0)=r_{0}>0,~~~~r_{t}(0)=r_{1},&
 \end{array}\right.
\end{equation*}
whose solutions are denoted by $r^{-}(t)$ and $r^{+}(t)$
respectively. Clearly, $r^{-}(t)\leqslant r(t)\leqslant r^{+}(t)$.
So, the convergent situation of $r(t)$ deeply depends on that of
$r^{-}(t)$, $r^{+}(t)$ which is not simple. This is because that one
has to discuss the sign of $\left(c^{-}+\frac{1}{2}\right)$,
$\left(c^{+}+\frac{1}{2}\right)$, which leads to the fact that the
convergent situation of $r(t)$ here will be more complicated that of
the one described in Example \ref{ex1}. }
\end{remark}

\section{Evolution equations of some geometric quantities}
\renewcommand{\thesection}{\arabic{section}}
\renewcommand{\theequation}{\thesection.\arabic{equation}}
\setcounter{equation}{0}

  Form the evolution equation for the HIMCF (\ref{IHMCF}), we can derive evolution equations for
some geometric quantities of the hypersurface $X(\cdot, t)$, and
these equations will play an important role in the future study on
the HIMCF.
\begin{lemma} \label{lemma4-1}
Under the HIMCF (\ref{IHMCF}), the following identities hold
$$\Delta h_{ij}=\nabla_{i}\nabla_{j}H+Hh_{il}g^{lm}h_{mj}-|A|^{2}h_{ij},$$
$$\Delta |A|^{2}=2g^{ik}g^{jl}h_{kl}\nabla_{i}\nabla_{j}H+2|\nabla A|^{2}+2H tr(A^{3})-2|A|^{4},$$
where
$$|A|^{2}=g^{ij}g^{kl}h_{ik}h_{jl}, ~~~~~ \quad
tr(A^{3})=g^{ij}g^{kl}g^{mn}h_{ik}h_{lm}h_{nj}.$$
\end{lemma}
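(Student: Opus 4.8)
The plan is to observe that both identities are purely geometric facts—the classical Simons-type identities for an immersed hypersurface in the flat ambient space $\mathbb{R}^{n+1}$—so the flow (\ref{IHMCF}) enters only through the fact that each slice $X(\cdot,t)$ is such a hypersurface; I would therefore fix a time $t$ and work on the fixed Riemannian manifold $(\mathbb{S}^n,g(\cdot,t))$. The two structural inputs are the Codazzi equation, which because the ambient space is flat reads $\nabla_k h_{ij}=\nabla_i h_{kj}$ and is hence totally symmetric in all three indices, and the Gauss equation $R_{ijkl}=h_{ik}h_{jl}-h_{il}h_{jk}$.

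For the first identity I would start from $\Delta h_{ij}=g^{kl}\nabla_k\nabla_l h_{ij}$ and use Codazzi to rewrite $\nabla_l h_{ij}=\nabla_i h_{lj}$, so that $\Delta h_{ij}=g^{kl}\nabla_k\nabla_i h_{lj}$. I then commute $\nabla_k\nabla_i$ into $\nabla_i\nabla_k$ by the Ricci identity for the $(0,2)$-tensor $h$, which produces two curvature terms of the form $-R_{kil}{}^{m}h_{mj}-R_{kij}{}^{m}h_{lm}$. After tracing against $g^{kl}$, the leading piece $g^{kl}\nabla_i\nabla_k h_{lj}$ collapses to $\nabla_i\nabla_j H$ (using $\nabla g=0$ together with $g^{kl}h_{kl}=H$ and Codazzi), while the Gauss equation lets me rewrite the two curvature contractions purely in terms of $h$. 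Collecting and simplifying should yield precisely $\nabla_i\nabla_j H+Hh_{il}g^{lm}h_{mj}-|A|^{2}h_{ij}$.

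For the second identity I would use the Bochner-type expansion $\Delta|A|^{2}=2g^{ik}g^{jl}h_{kl}\,\Delta h_{ij}+2|\nabla A|^{2}$, which follows from $|A|^{2}=g^{ik}g^{jl}h_{ij}h_{kl}$ and $\nabla g=0$. Writing $h^{ij}:=g^{ik}g^{jl}h_{kl}$ and $h^{l}{}_{j}:=g^{lm}h_{mj}$, I substitute the first identity into the term $2h^{ij}\Delta h_{ij}$ and contract, using $h^{ij}h_{il}h^{l}{}_{j}=\mathrm{tr}(A^{3})$ and $h^{ij}h_{ij}=|A|^{2}$; this produces $2g^{ik}g^{jl}h_{kl}\nabla_i\nabla_j H+2H\,\mathrm{tr}(A^{3})-2|A|^{4}$, and adding back $2|\nabla A|^{2}$ gives the stated formula.

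The main obstacle will be the careful index bookkeeping in the two curvature contractions of the first identity: one must insert the Gauss equation with the correct sign convention, raise and lower indices with $g$ consistently, and verify that exactly the terms $Hh_{il}g^{lm}h_{mj}$ and $-|A|^{2}h_{ij}$ survive with the right coefficients and no spurious mixed term remains. Once the first identity is established, the second is a routine contraction.
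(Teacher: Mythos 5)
Your proposal is correct: the paper gives no proof of its own for this lemma but simply cites Zhu's \emph{Lectures on Mean Curvature Flows}, and the argument there is precisely the standard Simons-identity derivation you outline (total symmetry of $\nabla h$ from the Codazzi equation in a flat ambient space, the Ricci commutation identity, the Gauss equation to convert the curvature contractions into terms in $h$, and then the Bochner-type contraction $\Delta|A|^{2}=2g^{ik}g^{jl}h_{kl}\Delta h_{ij}+2|\nabla A|^{2}$ for the second identity). Nothing further is needed.
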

The proof of Lemma \ref{lemma4-1} can be found in Zhu \cite{zxp}.

\begin{lemma} \label{lemma4-2}
Under the HIMCF (\ref{IHMCF}), it holds that
\begin{eqnarray*}
\frac{\partial^{2}g_{ij}}{\partial t^{2}}=
2H^{-1}h_{ij}+2e^{2\varphi}(\varphi_{it}+\varphi_{i}\varphi_{t})(\varphi_{jt}+\varphi_{j}\varphi_{t}),
\end{eqnarray*}
where $\varphi$ is defined as in Section 2.
\end{lemma}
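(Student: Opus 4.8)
The plan is to differentiate the induced metric $g_{ij}=\langle \partial_i X,\partial_j X\rangle$ twice in $t$, using that the spatial derivatives $\partial_i$ commute with $\partial_t$. Writing $X_i=\partial_i X$ and applying the product rule twice gives
\[
\frac{\partial^2 g_{ij}}{\partial t^2}=\langle \partial_t^2 X_i,X_j\rangle+2\langle \partial_t X_i,\partial_t X_j\rangle+\langle X_i,\partial_t^2 X_j\rangle,
\]
so the computation splits into the two \emph{outer} terms, which I expect to produce $2H^{-1}h_{ij}$, and the \emph{middle} term $2\langle\partial_t X_i,\partial_t X_j\rangle$, which should account for the remaining $\varphi$-expression.

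For the outer terms I would commute derivatives, $\partial_t^2 X_i=\partial_i(\partial_t^2 X)=\partial_i(H^{-1}\vec{\nu})$ by the flow equation in (\ref{IHMCF}), and expand $\partial_i(H^{-1}\vec{\nu})=\partial_i(H^{-1})\vec{\nu}+H^{-1}\partial_i\vec{\nu}$. Since $\vec{\nu}$ is normal, $\langle\vec{\nu},X_j\rangle=0$ kills the first piece, and the Weingarten relation $\langle\partial_i\vec{\nu},X_j\rangle=h_{ij}$ (in the sign convention of Section 2, as one checks on the round sphere where $h_{ij}=\frac1r g_{ij}$) gives $\langle\partial_t^2 X_i,X_j\rangle=H^{-1}h_{ij}$. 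By symmetry of $h_{ij}$ the third term contributes the same, so together they yield $2H^{-1}h_{ij}$. Note that no third-order terms appear here precisely because $H^{-1}$ is kept intact, rather than substituted through the $\varphi$-equation (\ref{kei3}); this is why the material time derivative, and not the graph one, is the right object to differentiate.

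The middle term is the heart of the matter, and here I would use the star-shaped representation $X=u\,\omega$, where $\omega=\omega(x)\in\mathbb{S}^{n}$ is the radial unit field, so that $\langle\omega,\omega\rangle=1$, $\langle\omega,\omega_k\rangle=0$ and $\langle\omega_k,\omega_l\rangle=\sigma_{kl}$ with $\omega_k=\partial_{x^k}\omega$. Differentiating first in time and then in space produces $\partial_t X_i$ as a combination of $\omega$ and the $\omega_k$ whose coefficients involve $u_t$ and $u_{ti}$; the algebraic substitutions $u_t=e^{\varphi}\varphi_t$ and $u_{ti}=e^{\varphi}(\varphi_{it}+\varphi_i\varphi_t)$, both immediate from $\varphi=\log u$, are exactly what convert the radial coefficient into the claimed factor $e^{2\varphi}(\varphi_{it}+\varphi_i\varphi_t)(\varphi_{jt}+\varphi_j\varphi_t)$ once $\langle\partial_t X_i,\partial_t X_j\rangle$ is formed and the frame relations above are applied.

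The main obstacle I anticipate is exactly the bookkeeping in this middle term: because the flow is second order its velocity is \emph{not} purely normal, so one must carry the tangential drift $\frac{dx^i}{dt}$ together with the angular contributions coming from $\langle\omega_i,\omega_j\rangle=\sigma_{ij}$, and organize them so that they collapse to the stated rank-one tensor. This is precisely the step where spurious $\sigma_{ij}$-type terms can survive, so before concluding I would verify the final expression against the explicit sphere and cylinder solutions of Section 3, where $u$ depends only on $t$ and every quantity is computable in closed form, as an independent consistency check.
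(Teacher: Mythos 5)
Your decomposition is exactly the paper's: expand $\partial_t^2\langle X_i,X_j\rangle$ by the product rule, handle the two outer terms with the flow equation and the Weingarten relation to obtain $2H^{-1}h_{ij}$, and reduce the middle term $2\langle\partial_tX_i,\partial_tX_j\rangle$ to the stated $\varphi$-expression. Your treatment of the outer terms matches the paper's line for line.

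The gap is in the middle term, and it is precisely the step you flag as dangerous. The paper disposes of it in one line, asserting $\langle\partial_t\partial_iX,\partial_t\partial_jX\rangle=u_{it}u_{jt}$; you propose to derive this from $X=u\,\omega$ and then ``organize'' the leftover contributions, but you never show that the $\sigma_{ij}$-type terms cancel --- and the consistency check you yourself suggest shows that they do not. Writing $X=u\,\omega$ gives $\partial_tX_i=u_{ti}\,\omega+u_t\,\omega_i$, hence $\langle\partial_tX_i,\partial_tX_j\rangle=u_{ti}u_{tj}+u_t^2\sigma_{ij}$, with an extra $u_t^2\sigma_{ij}$ that neither your target formula nor the paper's accounts for. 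On the round sphere of Example \ref{ex2} (where radial initial velocity makes the Lagrangian and graph parametrizations coincide, so no tangential drift can rescue the computation) one has $g_{ij}=r^2\sigma_{ij}$, so $\partial_t^2g_{ij}=(2rr_{tt}+2r_t^2)\sigma_{ij}$, while the right-hand side of the lemma equals $2H^{-1}h_{ij}=\tfrac{2}{n}r^2\sigma_{ij}=2rr_{tt}\sigma_{ij}$ (the $\varphi$-term vanishes because $u$ is spatially constant): the identity misses exactly $2u_t^2\sigma_{ij}$ whenever $r_t\neq0$. So your plan, carried out honestly, does not arrive at the stated formula; either the bookkeeping with $\tfrac{dx^i}{dt}$ must be done explicitly and shown to cancel this term (it will not on the sphere), or the identity needs the additional $2u_t^2\sigma_{ij}$-type correction. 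Do perform the sphere check you propose --- it is decisive.
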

\begin{proof}
Denote by $\langle,\rangle$ the standard Euclidean metric in
$\mathbb{R}^{n+1}$ in this section. By a direct computation, we have
\begin{eqnarray*}
\frac{\partial^{2}}{\partial
t^{2}}g_{ij}&=&\frac{\partial^{2}}{\partial t^{2}}
\left\langle\frac{\partial X}{\partial x^{i}}, \frac{\partial
X}{\partial x^{j}}\right\rangle=\frac{\partial}{\partial
t}\left(\left\langle \frac{\partial^{2}X}{\partial x^{i}\partial t},
\frac{\partial X}{\partial x^{j}}\right\rangle
+\left\langle \frac{\partial X}{\partial x^{i}}, \frac{\partial^{2}X}{\partial x^{j}\partial t}\right\rangle\right)\\
&=&\left\langle \frac{\partial^{3}X}{\partial t^{2}\partial x^{i}},
\frac{\partial X}{\partial x^{j}}\right\rangle +\left\langle
\frac{\partial^{2}X}{\partial t\partial x^{i}},
\frac{\partial^{2}X}{\partial t\partial x^{j}}\right\rangle
+\left\langle \frac{\partial^{2}X}{\partial t\partial x^{i}},
\frac{\partial^{2}X}{\partial t\partial x^{j}}\right\rangle
+\left\langle \frac{\partial^{3}X}{\partial t^{2}\partial x^{j}}, \frac{\partial X}{\partial x^{i}}\right\rangle\\
&=&2\left\langle \frac{\partial^{3}X}{\partial t^{2}\partial x^{i}},
\frac{\partial X}{\partial x^{j}}\right\rangle
+2\left\langle \frac{\partial^{2}X}{\partial t\partial x^{i}}, \frac{\partial^{2}X}{\partial t\partial x^{j}}\right\rangle\\
&=&2\left\langle \frac{\partial}{\partial x^{i}}(H^{-1}\vec{\nu}),
\frac{\partial X}{\partial x^{j}}\right\rangle
+2\left\langle \frac{\partial^{2}X}{\partial t\partial x^{i}}, \frac{\partial^{2}X}{\partial t\partial x^{j}}\right\rangle\\
&=&2H^{-1}\left\langle h_{ik}g^{kl}\frac{\partial X}{\partial
x^{l}}, \frac{\partial X}{\partial x^{j}}\right\rangle
+2\left\langle \frac{\partial^{2}X}{\partial t\partial x^{i}}, \frac{\partial^{2}X}{\partial t\partial x^{j}}\right\rangle\\
&=&2H^{-1}h_{ij}+2\left\langle \frac{\partial^{2}X}{\partial t\partial x^{i}}, \frac{\partial^{2}X}{\partial t\partial x^{j}}\right\rangle\\
&=&2H^{-1}h_{ij}+2u_{it}u_{jt}=2H^{-1}h_{ij}+2e^{2\varphi}(\varphi_{it}+\varphi_{i}\varphi_{t})(\varphi_{jt}+\varphi_{j}\varphi_{t}),
\end{eqnarray*}
which completes the proof of Lemma \ref{lemma4-2}.
\end{proof}

\begin{lemma} \label{lemma4-3}
Under the HIMCF (\ref{IHMCF}), we have
\begin{eqnarray*}
\frac{\partial^{2}\vec{\nu}}{\partial
t^{2}}&=&H^{-2}g^{ij}\frac{\partial H}{\partial x^{i}}\frac{\partial
X}{\partial x^{j}}
-\frac{1}{\upsilon}g^{ij}e^{\varphi}(\varphi_{it}+\varphi_{i}\varphi_{t})\frac{\partial^{2} X}{\partial t\partial x^{j}}\\
&&+\frac{1}{\upsilon}g^{ij}g^{kl}e^{3\varphi}
(\varphi_{it}+\varphi_{i}\varphi_{t})(\varphi_{l}\varphi_{jt}+3\varphi_{j}\varphi_{l}\varphi_{t}+2\varphi_{j}\varphi_{lt})\frac{\partial
X}{\partial x^{k}},
\end{eqnarray*}
where $\varphi$ and $\upsilon$ are given as in Section 2.
\end{lemma}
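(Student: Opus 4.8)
The plan is to compute $\vec{\nu}_t$ first and then differentiate once more in $t$. Since $\langle\vec{\nu},\vec{\nu}\rangle\equiv1$, differentiating in $t$ gives $\langle\vec{\nu}_t,\vec{\nu}\rangle=0$, so $\vec{\nu}_t$ is tangential and may be written $\vec{\nu}_t=c^{j}\,\partial X/\partial x^{j}$ for coefficients $c^{j}$ to be found. To determine them I would differentiate the orthogonality relation $\langle\vec{\nu},\partial X/\partial x^{k}\rangle=0$ in $t$, obtaining $\langle\vec{\nu}_t,\partial X/\partial x^{k}\rangle=-\langle\vec{\nu},\partial^{2}X/\partial t\partial x^{k}\rangle$, and then raise the index with $g^{jk}$ to get $c^{j}=-g^{jk}\langle\vec{\nu},\partial^{2}X/\partial t\partial x^{k}\rangle$. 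Using the explicit form $\vec{\nu}=\upsilon^{-1}(-D_{i}u,1)$ together with $u=e^{\varphi}$, a direct computation yields $\langle\vec{\nu},\partial^{2}X/\partial t\partial x^{k}\rangle=u_{kt}/\upsilon=e^{\varphi}(\varphi_{kt}+\varphi_{k}\varphi_{t})/\upsilon$, so that
\[
\vec{\nu}_t=-\frac{1}{\upsilon}g^{ij}e^{\varphi}(\varphi_{it}+\varphi_{i}\varphi_{t})\frac{\partial X}{\partial x^{j}},
\]
which is purely tangential, as it must be.

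Next I would differentiate this in $t$ by the product rule,
\[
\frac{\partial^{2}\vec{\nu}}{\partial t^{2}}=\Big(\frac{\partial}{\partial t}c^{j}\Big)\frac{\partial X}{\partial x^{j}}+c^{j}\frac{\partial^{2}X}{\partial t\partial x^{j}},\qquad c^{j}=-\frac{1}{\upsilon}g^{ij}e^{\varphi}(\varphi_{it}+\varphi_{i}\varphi_{t}).
\]
The second summand is already exactly the middle term of the claimed identity, so no further work is needed there; the normal part of $\vec{\nu}_{tt}$ (which must be present since $\langle\vec{\nu}_{tt},\vec{\nu}\rangle=-|\vec{\nu}_t|^{2}$) is carried inside the undecomposed factor $\partial^{2}X/\partial t\partial x^{j}$, which is why the statement exhibits no separate $\vec{\nu}$-term. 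It then remains to show that the tangential expression $(\partial_t c^{j})\,\partial X/\partial x^{j}$ equals the sum of the first and third terms of the statement.

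For this I would expand $\partial_t c^{j}$, differentiating $\upsilon^{-1}$, the inverse metric $g^{ij}$, and the factor $u_{it}=e^{\varphi}(\varphi_{it}+\varphi_{i}\varphi_{t})$ separately. The $\nabla H$ term is produced by $\partial_t u_{it}=\partial_{i}u_{tt}$: substituting the Section 2 identity (\ref{kei2}), namely $u_{tt}=\upsilon/H-(\cdots)$, and taking $\partial_{i}$ releases the factor $-\upsilon H_{i}/H^{2}$, which after multiplication by $-\upsilon^{-1}g^{ij}$ becomes precisely $H^{-2}g^{ij}(\partial H/\partial x^{i})\,\partial X/\partial x^{j}$, the first term. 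The remaining contributions, coming from the lower-order part of $\partial_{i}u_{tt}$, from $\partial_t(\upsilon^{-1})$, and, crucially, from $\partial_t g^{ij}=-g^{ik}g^{jl}\partial_t g_{kl}$ with $\partial_t g_{kl}=\langle\partial^{2}X/\partial t\partial x^{k},\partial X/\partial x^{l}\rangle+\langle\partial X/\partial x^{k},\partial^{2}X/\partial t\partial x^{l}\rangle$, are all tangential; after rewriting the inner products through the $\varphi$-derivatives and repeatedly using $u=e^{\varphi}$, they should reorganize into the third term. The derivative of the inverse metric is what supplies the extra factor $g^{kl}$ and the nested combination $(\varphi_{l}\varphi_{jt}+3\varphi_{j}\varphi_{l}\varphi_{t}+2\varphi_{j}\varphi_{lt})$.

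The hard part is this last reorganization: it is purely computational but bookkeeping-heavy, and the main risk is the reparametrization drift terms $dx^{i}/dt$, which enter both through (\ref{kei2}) and through $\partial_t g_{kl}$, and which must be shown to combine into exactly the stated coefficients rather than leaving residual terms. A convenient sanity check is furnished by the sphere and cylinder solutions of Section 3: there $\varphi=\log u$ is independent of the spatial variables, so every factor $(\varphi_{it}+\varphi_{i}\varphi_{t})$ as well as $\partial H/\partial x^{i}$ vanishes, all three terms drop out, and the identity collapses to $\vec{\nu}_{tt}=0$, consistent with $\vec{\nu}$ being constant in $t$ for those explicit solutions.
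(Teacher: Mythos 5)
Your first step coincides with the paper's: both write $\vec{\nu}_t$ as a tangential vector with coefficients $c^{j}=-g^{jk}\langle\vec{\nu},\partial^{2}X/\partial t\partial x^{k}\rangle=-\upsilon^{-1}g^{jk}u_{kt}$, and both obtain the middle term of the lemma from the undifferentiated factor $\partial^{2}X/\partial t\partial x^{j}$ left by the product rule. The skeleton $\vec{\nu}_{tt}=(\partial_{t}c^{j})\,\partial X/\partial x^{j}+c^{j}\,\partial^{2}X/\partial t\partial x^{j}$ is exactly the paper's as well.

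The gap is in how you evaluate $\partial_{t}c^{j}$, and it is not merely bookkeeping. The paper never invokes the scalar equation (\ref{kei2}); it keeps $c^{j}$ in inner-product form, so that $\partial_{t}c^{j}$ splits into $-g^{jk}\langle\vec{\nu},\partial^{3}X/\partial t^{2}\partial x^{k}\rangle$, $-g^{jk}\langle\vec{\nu}_{t},\partial^{2}X/\partial t\partial x^{k}\rangle$, and $-(\partial_{t}g^{jk})\langle\vec{\nu},\partial^{2}X/\partial t\partial x^{k}\rangle$. Into the first piece it substitutes the vector flow equation $\partial^{2}X/\partial t^{2}=H^{-1}\vec{\nu}$ and uses $\langle\vec{\nu},\partial_{k}\vec{\nu}\rangle=0$, so that $\langle\vec{\nu},\partial_{k}(H^{-1}\vec{\nu})\rangle=\partial_{k}(H^{-1})$ exactly; this yields the $H^{-2}g^{jk}H_{k}$ term with \emph{no remainder}. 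The other two pieces are tangential pairings $\langle\partial_{l}X,\partial^{2}X/\partial t\partial x^{k}\rangle=u_{l}u_{kt}$, etc., which assemble directly into the combination $u_{it}(u_{l}u_{jt}+2u_{j}u_{lt})$, i.e.\ the third term. Your route --- substituting the scalar identity (\ref{kei2}) for $u_{tt}$ and applying $\partial_{i}$ --- produces, besides $-\upsilon H_{i}/H^{2}$, the extra term $\upsilon_{i}/H$ and the spatial derivatives of the drift terms involving $dx^{i}/dt$; none of these occur in the paper's computation or in the stated formula, and your proposal only asserts that they ``should reorganize'' into the third term, possibly after cancelling against $\partial_{t}(\upsilon^{-1})$ and $\partial_{t}g^{ij}$. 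That cancellation is the entire content of the lemma on your route and is left unverified. The missing idea is precisely the identity $\langle\vec{\nu},\partial_{i}(H^{-1}\vec{\nu})\rangle=\partial_{i}(H^{-1})$, which sidesteps all of it; and your sphere/cylinder sanity check, while consistent, is vacuous for exactly the terms at issue, since every spatial derivative vanishes there.
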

\begin{proof}
First, we have
\begin{eqnarray*}
\frac{\partial\vec{\nu}}{\partial t}
=\left\langle\frac{\partial\overrightarrow{\nu}}{\partial t},
\frac{\partial X}{\partial x^{i}}\right\rangle g^{ij}\frac{\partial
X}{\partial x^{j}} =-\left\langle\vec{\nu},
\frac{\partial^{2}X}{\partial t\partial x^{i}}\right\rangle
g^{ij}\frac{\partial X}{\partial x^{j}}.
\end{eqnarray*}
Then, by a direct computation, it follows that
\begin{eqnarray*}
\frac{\partial^{2}\vec{\nu}}{\partial t^{2}}
&=&-\left\langle\frac{\partial\vec{\nu}}{\partial t},
\frac{\partial^{2}X}{\partial t\partial x^{i}}\right\rangle
g^{ij}\frac{\partial X}{\partial x^{j}} -\left\langle\vec{\nu},
\frac{\partial^{3}X}{\partial t^{2}\partial x^{i}}\right\rangle
g^{ij}\frac{\partial X}{\partial x^{j}} -\left\langle\vec{\nu},
\frac{\partial^{2}X}{\partial t\partial x^{i}}\right\rangle
\frac{\partial g^{ij}}{\partial t}
\frac{\partial X}{\partial x^{j}} - \\
&& \quad \left\langle\vec{\nu}, \frac{\partial^{2}X}{\partial
t\partial x^{i}}\right\rangle g^{ij}
\frac{\partial^{2} X}{\partial t\partial x^{j}}\\
&=&g^{ij}g^{kl}\left\langle\vec{\nu}, \frac{\partial^{2}X}{\partial
t\partial x^{k}}\right\rangle\left\langle\frac{\partial X}{\partial
x^{l}}, \frac{\partial^{2}X}{\partial t\partial
x^{i}}\right\rangle\frac{\partial X}{\partial x^{j}}
-\left\langle\vec{\nu}, \frac{\partial}{\partial x^{i}}(H^{-1}\vec{\nu})\right\rangle g^{ij}\frac{\partial X}{\partial x^{j}}\\
&&+\left\langle\vec{\nu}, \frac{\partial^{2}X}{\partial t\partial
x^{i}}\right\rangle g^{ik}g^{jl}\frac{\partial g_{kl}}{\partial
t}\frac{\partial X}{\partial x^{j}} -\left\langle\vec{\nu},
\frac{\partial^{2}X}{\partial t\partial x^{i}}\right\rangle g^{ij}
\frac{\partial^{2} X}{\partial t\partial x^{j}}\\
&=&H^{-2}g^{ij}\frac{\partial H}{\partial x^{i}}\frac{\partial
X}{\partial x^{j}} -g^{ij}\left\langle\vec{\nu},
\frac{\partial^{2}X}{\partial t\partial x^{i}}\right\rangle
\frac{\partial^{2} X}{\partial t\partial x^{j}}
+g^{ij}g^{kl}\left\langle\vec{\nu}, \frac{\partial^{2}X}{\partial
t\partial
x^{i}}\right\rangle\Big{(}\left\langle\frac{\partial^{2}X}{\partial
t\partial x^{j}}, \frac{\partial X}{\partial x^{l}}\right\rangle
+\\
&& \quad 2\left\langle\frac{\partial X}{\partial x^{j}}, \frac{\partial^{2}X}{\partial t\partial x^{l}}\right\rangle\Big{)}\frac{\partial X}{\partial x^{k}}\\
&=&H^{-2}g^{ij}\frac{\partial H}{\partial x^{i}}\frac{\partial
X}{\partial x^{j}}-\frac{1}{\upsilon}g^{ij}u_{it}\frac{\partial^{2}
X}{\partial t\partial x^{j}}
+\frac{1}{\upsilon}g^{ij}g^{kl}u_{it}(u_{l}u_{jt}+2u_{j}u_{lt})\frac{\partial X}{\partial x^{k}}\\
&=&H^{-2}g^{ij}\frac{\partial H}{\partial x^{i}}\frac{\partial
X}{\partial x^{j}}
-\frac{1}{\upsilon}g^{ij}e^{\varphi}(\varphi_{it}+\varphi_{i}\varphi_{t})\frac{\partial^{2} X}{\partial t\partial x^{j}}\\
&&+\frac{1}{\upsilon}g^{ij}g^{kl}e^{3\varphi}
(\varphi_{it}+\varphi_{i}\varphi_{t})(\varphi_{l}\varphi_{jt}+3\varphi_{j}\varphi_{l}\varphi_{t}+2\varphi_{j}\varphi_{lt})\frac{\partial
X}{\partial x^{k}}
\end{eqnarray*}
which completes the proof of Lemma \ref{lemma4-3}.
\end{proof}

\begin{lemma} \label{lemma4-4}
Under the HIMCF (\ref{IHMCF}), we have
\begin{eqnarray*}
&&\frac{\partial^{2}h_{ij}}{\partial t^{2}}=H^{-2}\Delta
h_{ij}-2H^{-3}\nabla_{i}H\nabla_{j}H+|A|^{2}h_{ij}+\frac{1}{\upsilon^2}g^{kl}h_{ij}e^{2\varphi}(\varphi_{kt}+\varphi_{k}\varphi_{t})
(\varphi_{lt}+\varphi_{l}\varphi_{t})+\\
&&\qquad \qquad
\frac{2}{\upsilon}\frac{\partial\Gamma^{k}_{ij}}{\partial
t}e^{\varphi}(\varphi_{kt}+\varphi_{k}\varphi_{t}),
 \end{eqnarray*}
 where $\varphi$ and $\upsilon$ are defined as in Section 2.
\end{lemma}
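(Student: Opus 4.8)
The plan is to run the standard ``differentiate the defining relation, then substitute the flow equation and a Simons identity'' scheme, but keeping careful track of the tangential part of the velocity, which is nonzero here. I start from the Euclidean relation $h_{ij}=-\langle\vec{\nu},X_{ij}\rangle$ with $X_{ij}=\partial_{x^i}\partial_{x^j}X$; this sign is fixed by the outward-normal convention $\vec{\nu}_i=h_{ik}g^{kl}X_l$ already used in the proof of Lemma \ref{lemma4-2} (and is consistent with $h_{ij}=r^{-1}g_{ij}$ on the sphere of Example \ref{ex1}). Since $x$ and $t$ are independent, $\partial_t$ commutes with $\partial_{x^i},\partial_{x^j}$, so
\begin{equation*}
\partial_t^2 h_{ij}=-\langle\vec{\nu}_{tt},X_{ij}\rangle-2\langle\vec{\nu}_t,X_{ijt}\rangle-\langle\vec{\nu},X_{ijtt}\rangle.
\end{equation*}
Before expanding I would record the identities used throughout: the Gauss formula $X_{ij}=\Gamma^k_{ij}X_k-h_{ij}\vec{\nu}$; the consequences $\langle\vec{\nu},\vec{\nu}_i\rangle=0$ and $\langle\vec{\nu},\vec{\nu}_{ij}\rangle=-h_{ik}g^{kl}h_{lj}$ of Weingarten; and the two first-order time identities $\langle\vec{\nu},X_{ti}\rangle=\upsilon^{-1}e^{\varphi}(\varphi_{it}+\varphi_i\varphi_t)$ and $\vec{\nu}_t=-g^{ij}\langle\vec{\nu},X_{ti}\rangle X_j$, both read off from the proof of Lemma \ref{lemma4-3}.

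The conceptual core is the last term. Substituting the flow equation $X_{tt}=H^{-1}\vec{\nu}$ gives $X_{ijtt}=\partial_{x^i}\partial_{x^j}(H^{-1}\vec{\nu})$; expanding by Leibniz and contracting against $-\vec{\nu}$ annihilates every term carrying a tangential $\vec{\nu}_i$ and leaves $-\partial_i\partial_j(H^{-1})+H^{-1}h_{ik}g^{kl}h_{lj}$. I would then rewrite the ordinary Hessian of the scalar $H^{-1}$ covariantly, $\partial_i\partial_j(H^{-1})=\nabla_i\nabla_j(H^{-1})+\Gamma^k_{ij}\nabla_k(H^{-1})$, and insert Simons' identity (Lemma \ref{lemma4-1}) in the form $\nabla_i\nabla_jH=\Delta h_{ij}-Hh_{il}g^{lm}h_{mj}+|A|^2h_{ij}$. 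This is exactly the mechanism producing the first three terms of the statement: the Laplacian term $H^{-2}\Delta h_{ij}$, the gradient term $-2H^{-3}\nabla_iH\nabla_jH$, and the curvature term $|A|^2h_{ij}$, with the cubic $Hh^2$ contribution from Simons cancelling against the $H^{-1}h_{ik}g^{kl}h_{lj}$ coming from $\langle\vec{\nu},\vec{\nu}_{ij}\rangle$. One non-tensorial leftover, $H^{-2}\Gamma^k_{ij}\nabla_kH$, is parked for cancellation in the next step.

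For the remaining two terms I would substitute the explicit $\vec{\nu}_{tt}$ of Lemma \ref{lemma4-3} into $-\langle\vec{\nu}_{tt},X_{ij}\rangle$ and pair via Gauss, using $\langle X_q,X_{ij}\rangle=g_{qk}\Gamma^k_{ij}$ and $\langle\vec{\nu},X_{ij}\rangle=-h_{ij}$. The radial-gradient part $H^{-2}g^{pq}(\partial_pH)X_q$ of $\vec{\nu}_{tt}$ then contributes precisely $-H^{-2}\Gamma^k_{ij}\nabla_kH$, cancelling the parked term. For the middle term I differentiate Gauss in time,
\begin{equation*}
X_{ijt}=(\partial_t\Gamma^k_{ij})X_k+\Gamma^k_{ij}X_{tk}-(\partial_t h_{ij})\vec{\nu}-h_{ij}\vec{\nu}_t,
\end{equation*}
and contract with the tangential vector $\vec{\nu}_t$. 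Using $\langle\vec{\nu}_t,X_k\rangle=-\upsilon^{-1}e^{\varphi}(\varphi_{kt}+\varphi_k\varphi_t)$ and $\langle\vec{\nu}_t,\vec{\nu}\rangle=0$, the first summand yields exactly $\frac{2}{\upsilon}(\partial_t\Gamma^k_{ij})e^{\varphi}(\varphi_{kt}+\varphi_k\varphi_t)$, the final term of the statement, while $-h_{ij}\vec{\nu}_t$ together with the normal part of $\vec{\nu}_{tt}$ assembles the $\upsilon^{-2}h_{ij}g^{kl}(\varphi_{kt}+\varphi_k\varphi_t)(\varphi_{lt}+\varphi_l\varphi_t)$ term once the two contributions are added with their signs.

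The main obstacle is \emph{not} the Simons step, which is clean, but the tangential bookkeeping in the last step: one must verify that all the $\Gamma^k_{ij}$-weighted tangential pieces — from the last summand of $\vec{\nu}_{tt}$, from the tangential part of $X_{ti}$ inside $\langle X_{tq},X_{ij}\rangle$, and from $\Gamma^k_{ij}X_{tk}$ — cancel, leaving only the single $\partial_t\Gamma^k_{ij}$ term, and that the two $\upsilon^{-2}$ contributions combine with the correct overall coefficient. The one genuinely delicate input is the identity $\langle\vec{\nu},X_{ti}\rangle=\upsilon^{-1}e^{\varphi}(\varphi_{it}+\varphi_i\varphi_t)$ (equivalently $\langle X_{ti},X_{tj}\rangle=u_{it}u_{jt}$ from Lemma \ref{lemma4-2}), since it encodes the non-normal component of the velocity and is what converts the Euclidean computation into the $\varphi$-form of the statement. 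Substituting $u=e^{\varphi}$ throughout then gives the asserted expression.
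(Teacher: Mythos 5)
Your proposal is correct and follows essentially the same route as the paper: the same three-term decomposition of $\partial_t^2 h_{ij}=-\langle\vec{\nu}_{tt},X_{ij}\rangle-2\langle\vec{\nu}_t,X_{ijt}\rangle-\langle\vec{\nu},X_{ijtt}\rangle$, substitution of the flow equation and the $\vec{\nu}_t$, $\vec{\nu}_{tt}$ formulas from Lemma \ref{lemma4-3}, cancellation of the Christoffel-weighted pieces, Simons' identity from Lemma \ref{lemma4-1} to trade $\nabla_i\nabla_jH$ for $\Delta h_{ij}$ (with the $Hh^2$ term cancelling $H^{-1}h_{ik}g^{kl}h_{lj}$), and finally $\langle\vec{\nu},X_{tk}\rangle=\upsilon^{-1}u_{kt}=\upsilon^{-1}e^{\varphi}(\varphi_{kt}+\varphi_k\varphi_t)$. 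Your explicit tracking of the non-tensorial $H^{-2}\Gamma^k_{ij}\nabla_kH$ term and of the two $\upsilon^{-2}$ contributions matches what the paper does implicitly in its long inner-product computation.
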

\begin{proof}
Since
 \begin{eqnarray*}
 \frac{\partial h_{ij}}{\partial
t}=-\frac{\partial}{\partial t}\left\langle\vec{\nu},
\frac{\partial^{2} X}{\partial x^{i}\partial x^{j}}\right\rangle
=-\left\langle\frac{\partial\vec{\nu}}{\partial t},
\frac{\partial^{2} X}{\partial x^{i}\partial x^{j}}\right\rangle
-\left\langle\vec{\nu}, \frac{\partial^{3} X}{\partial t\partial
x^{i}\partial x^{j}}\right\rangle,
\end{eqnarray*}
 we have
\begin{eqnarray*}
\frac{\partial^{2}h_{ij}}{\partial t^{2}}
&=&-\left\langle\frac{\partial^{2}\vec{\nu}}{\partial t^{2}},
\frac{\partial^{2} X}{\partial x^{i}\partial x^{j}}\right\rangle
-2\left\langle\frac{\partial\vec{\nu}}{\partial t},
\frac{\partial^{3} X}{\partial t\partial x^{i}\partial
x^{j}}\right\rangle
-\left\langle\vec{\nu}, \frac{\partial^{4} X}{\partial t^{2}\partial x^{i}\partial x^{j}}\right\rangle\\
&=&-H^{-2}g^{kl}\left\langle\frac{\partial H}{\partial
x^{k}}\frac{\partial X}{\partial x^{l}}, \frac{\partial^{2}
X}{\partial x^{i}\partial x^{j}}\right\rangle
+g^{kl}\left\langle\vec{\nu}, \frac{\partial^{2}X}{\partial
t\partial x^{k}}\right\rangle
\left\langle\frac{\partial^{2}X}{\partial t\partial x^{l}}, \frac{\partial^{2}X}{\partial x^{i}\partial x^{j}}\right\rangle\\
&&-g^{pq}g^{kl}\left\langle\vec{\nu}, \frac{\partial^{2}X}{\partial
t\partial x^{p}}\right\rangle
\left(\left\langle\frac{\partial^{2}X}{\partial t\partial x^{q}},
\frac{\partial X}{\partial x^{l}}\right\rangle
+2\left\langle\frac{\partial X}{\partial x^{q}},
\frac{\partial^{2}X}{\partial t\partial x^{l}}\right\rangle\right)
\left\langle\frac{\partial X}{\partial x^{k}}, \frac{\partial^{2}X}{\partial x^{i}\partial x^{j}}\right\rangle\\
&&+2g^{kl}\left\langle\vec{\nu}, \frac{\partial^{2}X}{\partial
t\partial x^{k}}\right\rangle \left\langle\frac{\partial X}{\partial
x^{l}}, \frac{\partial^{3} X}{\partial t\partial x^{i}\partial
x^{j}}\right\rangle
-\left\langle\vec{\nu}, \frac{\partial^{2}}{\partial x^{i}\partial x^{j}}\left(H^{-1}\vec{\nu}\right)\right\rangle\\
&=&-H^{-2}\frac{\partial H}{\partial x^{k}}\Gamma^{k}_{ij}
+g^{kl}\left\langle\vec{\nu}, \frac{\partial^{2}X}{\partial
t\partial x^{k}}\right\rangle\Gamma^{m}_{ij}
\left\langle\frac{\partial X}{\partial x^{m}},
\frac{\partial^{2}X}{\partial t\partial x^{l}}\right\rangle
-g^{kl}\left\langle\vec{\nu}, \frac{\partial^{2}X}{\partial
t\partial x^{k}}\right\rangle h_{ij}\times\\
&&\left\langle\vec{\nu}, \frac{\partial^{2}X}{\partial t\partial
x^{l}}\right\rangle-g^{pq}\left\langle\vec{\nu},
\frac{\partial^{2}X}{\partial t\partial x^{p}}\right\rangle
\left(\left\langle\frac{\partial^{2}X}{\partial t\partial x^{q}},
\frac{\partial X}{\partial x^{m}}\right\rangle
+2\left\langle\frac{\partial X}{\partial x^{q}}, \frac{\partial^{2}X}{\partial t\partial x^{m}}\right\rangle\right)\Gamma^{m}_{ij}\\
&&+2g^{kl}\left\langle\vec{\nu}, \frac{\partial^{2}X}{\partial
t\partial x^{k}}\right\rangle\Gamma^{m}_{ij}
\left\langle\frac{\partial X}{\partial x^{l}},
\frac{\partial^{2}X}{\partial t\partial x^{m}}\right\rangle
+2\frac{\partial\Gamma^{k}_{ij}}{\partial t}\left\langle\vec{\nu},
\frac{\partial^{2}X}{\partial t\partial x^{k}}\right\rangle
+2g^{kl}h_{ij}\times\\
&&\left\langle\vec{\nu}, \frac{\partial^{2}X}{\partial t\partial
x^{k}}\right\rangle
\left\langle\vec{\nu}, \frac{\partial^{2}X}{\partial t\partial x^{l}}\right\rangle
-2H^{-3}\nabla_{i}H\nabla_{j}H+H^{-2}\frac{\partial^{2}H}{\partial x^{i}\partial x^{j}}+H^{-1}h_{jk}g^{kl}h_{li}\\
&=&H^{-2}\nabla_{i}\nabla_{j}H-2H^{-3}\nabla_{i}H\nabla_{j}H+H^{-1}h_{jk}g^{kl}h_{li}
+g^{kl}h_{ij}\times\\
&&\qquad\qquad \left\langle\vec{\nu}, \frac{\partial^{2}X}{\partial
t\partial x^{k}}\right\rangle\left \langle\vec{\nu},
\frac{\partial^{2}X}{\partial t\partial x^{l}}\right\rangle
+2\frac{\partial\Gamma^{k}_{ij}}{\partial t}\left\langle\vec{\nu},
\frac{\partial^{2}X}{\partial t\partial x^{k}}\right\rangle.
\end{eqnarray*}
    By Lemma \ref{lemma4-1},
$$\Delta h_{ij}=\nabla_{i}\nabla_{j}H+Hh_{il}g^{lm}h_{mj}-|A|^{2}h_{ij},$$
we can obtain
\begin{eqnarray*}
\frac{\partial^{2}h_{ij}}{\partial t^{2}}&=&H^{-2}\Delta
h_{ij}-2H^{-3}\nabla_{i}H\nabla_{j}H+|A|^{2}h_{ij}
+g^{kl}h_{ij}\left\langle\vec{\nu}, \frac{\partial^{2}X}{\partial
t\partial x^{k}}\right\rangle \left\langle\vec{\nu},
\frac{\partial^{2}X}{\partial t\partial x^{l}}\right\rangle
+\\
&& \quad 2\frac{\partial\Gamma^{k}_{ij}}{\partial t}\left\langle\vec{\nu}, \frac{\partial^{2}X}{\partial t\partial x^{k}}\right\rangle\\
&=&H^{-2}\Delta h_{ij}-2H^{-3}\nabla_{i}H\nabla_{j}H+|A|^{2}h_{ij}+\frac{1}{\upsilon^2}g^{kl}h_{ij}u_{kt}u_{lt}+\frac{2}{\upsilon}\frac{\partial\Gamma^{k}_{ij}}{\partial t}u_{kt}\\
&=&H^{-2}\Delta
h_{ij}-2H^{-3}\nabla_{i}H\nabla_{j}H+|A|^{2}h_{ij}+\frac{1}{\upsilon^2}g^{kl}h_{ij}e^{2\varphi}(\varphi_{kt}+\varphi_{k}\varphi_{t})
(\varphi_{lt}+\varphi_{l}\varphi_{t})+\\
&& \quad \frac{2}{\upsilon}\frac{\partial\Gamma^{k}_{ij}}{\partial
t}e^{\varphi}(\varphi_{kt}+\varphi_{k}\varphi_{t}),
\end{eqnarray*}
which completes the proof of Lemma \ref{lemma4-4}.
\end{proof}

\begin{lemma} \label{lemma4-5}
Under the HIMCF (\ref{IHMCF}), we have
\begin{eqnarray*}
\frac{\partial^{2}H}{\partial t^{2}}&=& H^{-2}\Delta
H-2H^{-3}|\nabla H|^{2}-H^{-1}|A|^{2}
+2g^{ik}g^{jp}g^{lq}h_{ij}\frac{\partial g_{pq}}{\partial
t}\frac{\partial g_{kl}}{\partial t}
-2g^{ik}g^{jl}\frac{\partial g_{kl}}{\partial t}\frac{\partial h_{ij}}{\partial t}\\
&&-\left(2g^{ik}g^{jl}h_{ij}-\frac{1}{\upsilon^2}Hg^{kl}\right)e^{2\varphi}(\varphi_{kt}+\varphi_{k}\varphi_{t})(\varphi_{lt}+\varphi_{l}\varphi_{t})
+\frac{2}{\upsilon}g^{ij}\frac{\partial\Gamma^{k}_{ij}}{\partial
t}e^{\varphi}(\varphi_{kt}+\varphi_{k}\varphi_{t}),
\end{eqnarray*}
 where $\varphi$ and $\upsilon$ are defined as in Section 2.
\end{lemma}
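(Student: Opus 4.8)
The plan is to differentiate the defining relation $H=g^{ij}h_{ij}$ twice in $t$ by the product rule, obtaining
\begin{eqnarray*}
\frac{\partial^{2}H}{\partial t^{2}}=\frac{\partial^{2}g^{ij}}{\partial t^{2}}\,h_{ij}+2\frac{\partial g^{ij}}{\partial t}\frac{\partial h_{ij}}{\partial t}+g^{ij}\frac{\partial^{2}h_{ij}}{\partial t^{2}},
\end{eqnarray*}
and then to feed in the evolution equations already established. To handle the inverse metric, I would differentiate $g^{ik}g_{kj}=\delta^{i}_{j}$: the first derivative gives $\frac{\partial g^{ij}}{\partial t}=-g^{ik}g^{jl}\frac{\partial g_{kl}}{\partial t}$, and differentiating once more expresses $\frac{\partial^{2}g^{ij}}{\partial t^{2}}$ through $\frac{\partial g_{kl}}{\partial t}$ and $\frac{\partial^{2}g_{kl}}{\partial t^{2}}$. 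The middle summand is then immediately $-2g^{ik}g^{jl}\frac{\partial g_{kl}}{\partial t}\frac{\partial h_{ij}}{\partial t}$, which is one of the terms in the claim.

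For the first summand, contracting $\frac{\partial^{2}g^{ij}}{\partial t^{2}}$ with $h_{ij}$ leaves two terms quadratic in $\frac{\partial g}{\partial t}$ together with $-g^{ik}g^{jl}h_{ij}\frac{\partial^{2}g_{kl}}{\partial t^{2}}$; into the latter I substitute Lemma \ref{lemma4-2}, which produces $-2H^{-1}|A|^{2}$ and $-2g^{ik}g^{jl}h_{ij}e^{2\varphi}(\varphi_{kt}+\varphi_{k}\varphi_{t})(\varphi_{lt}+\varphi_{l}\varphi_{t})$. For the third summand I substitute Lemma \ref{lemma4-4} and trace with $g^{ij}$, using $g^{ij}\Delta h_{ij}=\Delta\!\left(g^{ij}h_{ij}\right)=\Delta H$ (metric compatibility), $g^{ij}h_{ij}=H$, $g^{ij}\nabla_{i}H\nabla_{j}H=|\nabla H|^{2}$ and $g^{ik}g^{jl}h_{ij}h_{kl}=|A|^{2}$. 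This yields $H^{-2}\Delta H-2H^{-3}|\nabla H|^{2}$; a curvature contribution $+H^{-1}|A|^{2}$ from tracing the $|A|^{2}h_{ij}$ term of Lemma \ref{lemma4-4} (that term carries the factor $H^{-2}$ produced by Lemma \ref{lemma4-1}, so $g^{ij}H^{-2}|A|^{2}h_{ij}=H^{-1}|A|^{2}$); the forced term $\frac{1}{\upsilon^{2}}Hg^{kl}e^{2\varphi}(\varphi_{kt}+\varphi_{k}\varphi_{t})(\varphi_{lt}+\varphi_{l}\varphi_{t})$; and $\frac{2}{\upsilon}g^{ij}\frac{\partial\Gamma^{k}_{ij}}{\partial t}e^{\varphi}(\varphi_{kt}+\varphi_{k}\varphi_{t})$.

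Collecting terms finishes the proof: the two $|A|^{2}$ contributions $-2H^{-1}|A|^{2}$ and $+H^{-1}|A|^{2}$ combine to the stated $-H^{-1}|A|^{2}$, while the $e^{2\varphi}$-quadratic pieces assemble into $-\left(2g^{ik}g^{jl}h_{ij}-\frac{1}{\upsilon^{2}}Hg^{kl}\right)e^{2\varphi}(\varphi_{kt}+\varphi_{k}\varphi_{t})(\varphi_{lt}+\varphi_{l}\varphi_{t})$. The step I expect to be the main obstacle is the index bookkeeping inside the first summand: one must verify that the two quadratic terms coming from $\frac{\partial^{2}g^{ij}}{\partial t^{2}}$, namely $g^{ia}g^{kb}g^{jl}h_{ij}\frac{\partial g_{ab}}{\partial t}\frac{\partial g_{kl}}{\partial t}$ and $g^{ik}g^{jp}g^{lq}h_{ij}\frac{\partial g_{pq}}{\partial t}\frac{\partial g_{kl}}{\partial t}$, are actually equal. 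This is where the symmetry of both $h_{ij}$ and $\frac{\partial g_{ij}}{\partial t}$ must be exploited: after raising indices and relabeling, the two scalars coincide, so they merge into the single coefficient $2g^{ik}g^{jp}g^{lq}h_{ij}\frac{\partial g_{pq}}{\partial t}\frac{\partial g_{kl}}{\partial t}$ recorded in the statement.
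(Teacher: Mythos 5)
Your proposal is correct and follows essentially the same route as the paper: differentiate $H=g^{ij}h_{ij}$ twice by the product rule, express $\frac{\partial^{2}g^{ij}}{\partial t^{2}}$ through $\frac{\partial g_{kl}}{\partial t}$ and $\frac{\partial^{2}g_{kl}}{\partial t^{2}}$, and substitute Lemmas \ref{lemma4-2} and \ref{lemma4-4} (the paper plugs in the intermediate expression from the proof of Lemma \ref{lemma4-4} involving $\nabla_{i}\nabla_{j}H$ and uses $g^{ij}\nabla_{i}\nabla_{j}H=\Delta H$, which is the same computation rearranged). Your parenthetical remark that the $|A|^{2}h_{ij}$ term must carry the factor $H^{-2}$ is exactly right --- the displayed statement of Lemma \ref{lemma4-4} drops that factor --- and it is precisely what is needed for the contributions $-2H^{-1}|A|^{2}$ and $+H^{-1}|A|^{2}$ to combine into the stated $-H^{-1}|A|^{2}$.
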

\begin{proof}
Noting that $g^{hm}g_{ml}=\delta^{h}_{l}$,
  we can get
$\frac{\partial g^{ij}}{\partial t}=-g^{ik}g^{jl}\frac{\partial
g_{kl}}{\partial t}$,
 which implies
\begin{eqnarray*}
\frac{\partial^{2}g^{ij}}{\partial t^{2}}&=&-\frac{\partial g^{ik}}{\partial t}g^{jl}\frac{\partial g_{kl}}{\partial t}
-g^{ik}\frac{\partial g^{jl}}{\partial t}\frac{\partial g_{kl}}{\partial t}
-g^{ik}g^{jl}\frac{\partial^{2}g_{kl}}{\partial t^{2}}\\
&=&2g^{ik}g^{jp}g^{lq}\frac{\partial g_{pq}}{\partial t}\frac{\partial g_{kl}}{\partial t}
-g^{ik}g^{jl}\frac{\partial^{2}g_{kl}}{\partial t^{2}}.
\end{eqnarray*}
By a direct calculation, we have
\begin{eqnarray*}
\frac{\partial^{2}H}{\partial t^{2}}&=&\frac{\partial^{2}}{\partial t^{2}}(g^{ij}h_{ij})\\
&=&\frac{\partial^{2}g^{ij}}{\partial t^{2}}h_{ij}+2\frac{\partial
g^{ij}}{\partial t}\frac{\partial h_{ij}}{\partial t}
+g^{ij}\frac{\partial^{2}h_{ij}}{\partial t^{2}}\\
&=&\left(2g^{ik}g^{jp}g^{lq}\frac{\partial g^{pq}}{\partial
t}\frac{\partial g^{kl}}{\partial t}
-g^{ik}g^{jl}\frac{\partial^{2}g_{kl}}{\partial t^{2}}\right)h_{ij}
-2g^{ik}g^{jl}\frac{\partial g_{kl}}{\partial t}\frac{\partial h_{ij}}{\partial t}+g^{ij}\frac{\partial^{2}h_{ij}}{\partial t^{2}}\\
&=&2g^{ik}g^{jp}g^{lq}h_{ij}\frac{\partial g_{pq}}{\partial
t}\frac{\partial g_{kl}}{\partial t} -2g^{ik}g^{jl}\frac{\partial
g_{kl}}{\partial t}\frac{\partial h_{ij}}{\partial t}
-g^{ik}g^{jl}h_{ij}
\left(2H^{-1}h_{kl}+2\left\langle \frac{\partial^{2}X}{\partial t\partial x^{k}}, \frac{\partial^{2}X}{\partial t\partial x^{l}}\right\rangle\right)\\
&&+g^{ij}\Big{(}H^{-2}\nabla_{i}\nabla_{j}H-2H^{-3}\nabla_{i}H\nabla_{j}H+H^{-1}h_{jk}g^{kl}h_{li}\\
&&+g^{kl}h_{ij}\left\langle\vec{\nu}, \frac{\partial^{2}X}{\partial
t\partial x^{k}}\right\rangle \left\langle\vec{\nu},
\frac{\partial^{2}X}{\partial t\partial x^{l}}\right\rangle
+2\frac{\partial\Gamma^{k}_{ij}}{\partial t}\left\langle\vec{\nu}, \frac{\partial^{2}X}{\partial t\partial x^{k}}\right\rangle\Big{)}\\
&=&H^{-2}\Delta H-2H^{-3}|\nabla
H|^{2}-H^{-1}|A|^{2}-2g^{ik}g^{jl}h_{ij} \left\langle
\frac{\partial^{2}X}{\partial t\partial x^{k}},
\frac{\partial^{2}X}{\partial t\partial x^{l}}\right\rangle
+\\
&&Hg^{kl}\left\langle\vec{\nu}, \frac{\partial^{2}X}{\partial
t\partial x^{k}}\right\rangle \left\langle\vec{\nu},
\frac{\partial^{2}X}{\partial t\partial
x^{l}}\right\rangle+2g^{ij}\frac{\partial\Gamma^{k}_{ij}}{\partial
t}\left\langle\vec{\nu}, \frac{\partial^{2}X}{\partial
t\partial x^{k}}\right\rangle\\
&& +2g^{ik}g^{jp}g^{lq}h_{ij}\frac{\partial g_{pq}}{\partial
t}\frac{\partial g_{kl}}{\partial t}
-2g^{ik}g^{jl}\frac{\partial g_{kl}}{\partial t}\frac{\partial h_{ij}}{\partial t}\\
&=&H^{-2}\Delta H-2H^{-3}|\nabla
H|^{2}-H^{-1}|A|^{2}-2g^{ik}g^{jl}h_{ij}u_{kt}u_{lt}+\frac{1}{\upsilon^2}Hg^{kl}u_{kt}u_{lt}
+\frac{2}{\upsilon}g^{ij}\frac{\partial\Gamma^{k}_{ij}}{\partial t}u_{kt}\\
&&+2g^{ik}g^{jp}g^{lq}h_{ij}\frac{\partial g_{pq}}{\partial
t}\frac{\partial g_{kl}}{\partial t}
-2g^{ik}g^{jl}\frac{\partial g_{kl}}{\partial t}\frac{\partial h_{ij}}{\partial t}\\
&=&H^{-2}\Delta H-2H^{-3}|\nabla H|^{2}-H^{-1}|A|^{2}
+2g^{ik}g^{jp}g^{lq}h_{ij}\frac{\partial g_{pq}}{\partial
t}\frac{\partial g_{kl}}{\partial t}
-2g^{ik}g^{jl}\frac{\partial g_{kl}}{\partial t}\frac{\partial h_{ij}}{\partial t}\\
&&-\left(2g^{ik}g^{jl}h_{ij}-\frac{1}{\upsilon^2}Hg^{kl}\right)e^{2\varphi}(\varphi_{kt}+\varphi_{k}\varphi_{t})(\varphi_{lt}+\varphi_{l}\varphi_{t})
+\frac{2}{\upsilon}g^{ij}\frac{\partial\Gamma^{k}_{ij}}{\partial
t}e^{\varphi}(\varphi_{kt}+\varphi_{k}\varphi_{t}),
\end{eqnarray*}
which completes the proof of Lemma \ref{lemma4-5}.
\end{proof}

\begin{lemma} \label{lemma4-6}
Under the HIMCF (\ref{IHMCF}), we have
\begin{eqnarray*}
\frac{\partial^{2}}{\partial t^{2}}|A|^{2}&=&H^{-2}\Delta(|A|^{2})-2H^{-2}|\nabla A|^{2}+2H^{-2}|A|^{4}-4H^{-2}|\nabla H|^{2}-4H^{-1}tr(A^{3})\\
&&+2g^{ij}g^{kl}\frac{\partial h_{ik}}{\partial t}\frac{\partial
h_{jl}}{\partial t}
-8g^{im}g^{jn}g^{kl}h_{jl}\frac{\partial g_{mn}}{\partial t}\frac{\partial h_{ik}}{\partial t}\\
&&+2g^{im}h_{ik}h_{jl}\frac{\partial g_{pq}}{\partial t}\frac{\partial g_{mn}}{\partial t}(2g^{jp}g^{nq}g^{kl}+g^{jn}g^{kp}g^{lq})\\
&&+\frac{2}{\upsilon^2}|A|^{2}g^{pq}e^{2\varphi}(\varphi_{pt}+\varphi_{p}\varphi_{t})(\varphi_{qt}+\varphi_{q}\varphi_{t})
+\frac{4}{\upsilon}g^{ij}g^{kl}h_{jl}\frac{\partial\Gamma^{p}_{ik}}{\partial t}e^{\varphi}(\varphi_{pt}+\varphi_{p}\varphi_{t})\\
&&-4g^{im}g^{jn}g^{kl}h_{ik}h_{jl}e^{2\varphi}(\varphi_{mt}+\varphi_{m}\varphi_{t})(\varphi_{nt}+\varphi_{n}\varphi_{t}),
\end{eqnarray*}
 where $\varphi$ is defined as in Section 2.
\end{lemma}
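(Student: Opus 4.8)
The plan is to differentiate the scalar $|A|^2=g^{ij}g^{kl}h_{ik}h_{jl}$ twice in $t$ by the Leibniz rule, treating it as a product of the four factors $g^{ij}$, $g^{kl}$, $h_{ik}$, $h_{jl}$. The expansion of $\frac{\partial^2}{\partial t^2}(g^{ij}g^{kl}h_{ik}h_{jl})$ produces four pure second-derivative terms — the two carrying $\partial_t^2 g$ coincide after relabelling and combine to $2(\partial_t^2 g^{ij})\,g^{kl}h_{ik}h_{jl}$, and the two carrying $\partial_t^2 h$ combine to $2h^{ik}\,\partial_t^2 h_{ik}$ — together with six mixed products of first $t$-derivatives. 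Thus the ingredients are $\partial_t g^{ij}$, $\partial_t^2 g^{ij}$, $\partial_t h_{ij}$ and $\partial_t^2 h_{ij}$, the last two of which I leave in raw form inside the mixed terms.

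For the metric factors I would use the two identities recorded in the proof of Lemma~\ref{lemma4-5}, namely $\partial_t g^{ij}=-g^{ik}g^{jl}\partial_t g_{kl}$ and $\partial_t^2 g^{ij}=2g^{ik}g^{jp}g^{lq}\partial_t g_{pq}\,\partial_t g_{kl}-g^{ik}g^{jl}\partial_t^2 g_{kl}$, and then substitute $\partial_t^2 g_{kl}=2H^{-1}h_{kl}+2u_{kt}u_{lt}$ from Lemma~\ref{lemma4-2}, writing throughout $u_{kt}=e^{\varphi}(\varphi_{kt}+\varphi_k\varphi_t)$. The reason for isolating $2(\partial_t^2 g^{ij})g^{kl}h_{ik}h_{jl}$ is that its $-2H^{-1}h^{ij}$ part contracts against $g^{kl}h_{ik}h_{jl}$ to give exactly the term $-4H^{-1}\,tr(A^3)$, while its $\partial_t g\,\partial_t g$ and $u_{kt}u_{lt}$ parts are routed to the correction block. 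For the second fundamental form I substitute $\partial_t^2 h_{ik}$ from Lemma~\ref{lemma4-4}.

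The key algebraic step concerns $2h^{ik}\,\partial_t^2 h_{ik}$. After substituting Lemma~\ref{lemma4-4}, its leading part is $2H^{-2}h^{ik}\Delta h_{ik}$, and since the ambient metric is parallel the scalar product rule $2h^{ik}\Delta h_{ik}=\Delta|A|^2-2|\nabla A|^2$ converts this into $H^{-2}\Delta|A|^2-2H^{-2}|\nabla A|^2$. I would then invoke the Simons-type identities of Lemma~\ref{lemma4-1} to rewrite the remaining curvature contractions — those coming from the $|A|^2 h_{ik}$ and $-2H^{-3}\nabla_i H\nabla_k H$ pieces of Lemma~\ref{lemma4-4}, together with the cubic and quartic contractions generated above by $\partial_t^2 g^{ij}$ — in terms of the scalar invariants $tr(A^3)$, $|A|^4$ and $|\nabla H|^2$. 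Collecting these with the earlier $-4H^{-1}\,tr(A^3)$ yields the geometric terms of the first line. Every factor still carrying $u_{kt}$, $\partial_t g_{ij}$, $\partial_t h_{ij}$ or $\partial_t\Gamma^k_{ij}$ is kept unexpanded and assembled into the six correction terms, using the identifications $\langle\vec\nu,\partial_t\partial_k X\rangle=\upsilon^{-1}u_{kt}$ and $u_{kt}=e^{\varphi}(\varphi_{kt}+\varphi_k\varphi_t)$ already exploited in Lemmas~\ref{lemma4-2}--\ref{lemma4-4}.

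I expect the main obstacle to be organisational rather than conceptual: the faithful bookkeeping of the ten-term Leibniz expansion. One must carry all the index contractions through the four substitutions without loss, and — most delicately — sort the output correctly into the geometric block and the correction block, ensuring that the pieces of $\partial_t^2 g^{ij}$ and $\partial_t^2 h_{ik}$ that carry $u_{kt}$ or $\partial_t g$ are sent to the corrections rather than double-counted against the curvature terms. Once the Simons identities of Lemma~\ref{lemma4-1} are applied and like terms are gathered, the stated formula follows.
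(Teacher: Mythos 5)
Your proposal is correct and follows essentially the same route as the paper's proof: the ten-term Leibniz expansion of $\partial_t^2\left(g^{ij}g^{kl}h_{ik}h_{jl}\right)$, substitution of $\partial_t g^{ij}=-g^{ik}g^{jl}\partial_t g_{kl}$ together with Lemmas \ref{lemma4-2} and \ref{lemma4-4} (writing $u_{kt}=e^{\varphi}(\varphi_{kt}+\varphi_{k}\varphi_{t})$), and the Simons identities of Lemma \ref{lemma4-1} to produce $H^{-2}\Delta|A|^{2}-2H^{-2}|\nabla A|^{2}$ and the scalar invariants $tr(A^{3})$, $|A|^{4}$, $|\nabla H|^{2}$. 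The only (equivalent) difference is that the paper substitutes the intermediate form of $\partial_t^2 h_{ik}$ containing $H^{-2}\nabla_{i}\nabla_{k}H+H^{-1}h_{ip}g^{pq}h_{qk}$ and then applies the second identity of Lemma \ref{lemma4-1}, rather than the final displayed form with $H^{-2}\Delta h_{ik}$ plus the product rule as you do; if you take the latter route, note that the $|A|^{2}h_{ij}$ term in that final form must carry the factor $H^{-2}$ for the coefficient $2H^{-2}|A|^{4}$ to come out as stated.
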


\begin{proof}
By a direct calculation, we have
\begin{eqnarray*}
\frac{\partial^{2}}{\partial
t^{2}}|A|^{2}&=&2g^{kl}h_{ik}h_{jl}\frac{\partial^{2}g^{ij}}{\partial
t^{2}} +2\frac{\partial g^{ij}}{\partial t}\frac{\partial
g^{kl}}{\partial t}h_{ik}h_{jl} +8g^{kl}h_{jl}\frac{\partial
g^{ij}}{\partial t}\frac{\partial h_{ik}}{\partial t} \\
&&\quad +2g^{ij}g^{kl}\frac{\partial h_{ik}}{\partial
t}\frac{\partial h_{jl}}{\partial t}
+2g^{ij}g^{kl}h_{jl}\frac{\partial^{2}h_{ik}}{\partial t^{2}}\\
&=&2g^{kl}h_{ik}h_{jl}\left(2g^{im}g^{jp}g^{nq}\frac{\partial
g_{pq}}{\partial t}\frac{\partial g_{mn}}{\partial t}
-g^{im}g^{jn}\frac{\partial^{2}g_{mn}}{\partial t^{2}}\right)+\\
&&2g^{im}g^{jn}g^{kp}g^{lq}h_{ik}h_{jl}\frac{\partial
g_{mn}}{\partial t}\frac{\partial g_{pq}}{\partial
t}-8g^{im}g^{jn}g^{kl}h_{jl}\frac{\partial g_{mn}}{\partial
t}\frac{\partial h_{ik}}{\partial t}
+2g^{ij}g^{kl}\frac{\partial h_{ik}}{\partial t}\frac{\partial h_{jl}}{\partial t}\\
&&+2g^{ij}g^{kl}h_{jl}\Big{(}H^{-2}\nabla_{i}\nabla_{k}H-2H^{-3}\nabla_{i}H\nabla_{j}H+H^{-1}h_{ip}g^{pq}h_{qk}\\
&&+g^{pq}h_{ij}\left\langle\vec{\nu}, \frac{\partial^{2}X}{\partial
t\partial x^{p}}\right\rangle \left\langle\vec{\nu},
\frac{\partial^{2}X}{\partial t\partial x^{q}}\right\rangle
+2\frac{\partial\Gamma^{p}_{ij}}{\partial t}\left\langle\vec{\nu}, \frac{\partial^{2}X}{\partial t\partial x^{p}}\right\rangle\Big{)}\\
&=&4g^{im}g^{jp}g^{nq}g^{kl}h_{ik}h_{jl}\frac{\partial
g_{pq}}{\partial t}\frac{\partial g_{mn}}{\partial t}
-2g^{im}g^{jn}g^{kl}h_{ik}h_{jl}\times\\
&&\qquad \left(2H^{-1}h_{mn}+2\left\langle
\frac{\partial^{2}X}{\partial t\partial x^{m}},
\frac{\partial^{2}X}{\partial t\partial x^{n}}\right\rangle\right)\\
&&+2g^{im}g^{jn}g^{kp}g^{lq}h_{ik}h_{jl}\frac{\partial
g_{mn}}{\partial t}\frac{\partial g_{pq}}{\partial t}
-8g^{im}g^{jn}g^{kl}h_{jl}\frac{\partial g_{mn}}{\partial
t}\frac{\partial h_{ik}}{\partial t}
+2g^{ij}g^{kl}\frac{\partial h_{ik}}{\partial t}\frac{\partial h_{jl}}{\partial t}\\
&&+2H^{-2}g^{ij}g^{kl}h_{jl}\nabla_{i}\nabla_{k}H+2H^{-1}tr(A^{3})-4H^{-3}g^{kl}h_{kl}|\nabla H|^{2}\\
&&+2g^{pq}|A|^{2}\left\langle\vec{\nu},
\frac{\partial^{2}X}{\partial t\partial x^{p}}\right\rangle
\left\langle\vec{\nu}, \frac{\partial^{2}X}{\partial t\partial
x^{q}}\right\rangle
+4g^{ij}g^{kl}h_{jl}\frac{\partial\Gamma^{p}_{ik}}{\partial t}
\left\langle\vec{\nu}, \frac{\partial^{2}X}{\partial t\partial
x^{p}}\right\rangle.
\end{eqnarray*}
Noting, by Lemma \ref{lemma4-1},
$$\Delta |A|^{2}=2g^{ik}g^{jl}h_{kl}\nabla_{i}\nabla_{j}H+2|\nabla A|^{2}+2H tr(A^{3})-2|A|^{4},$$
then
\begin{eqnarray*}
\frac{\partial^{2}}{\partial
t^{2}}|A|^{2}&=&H^{-2}\Delta(|A|^{2})-2H^{-2}|\nabla
A|^{2}+2H^{-2}|A|^{4}
-4H^{-2}|\nabla H|^{2}-4H^{-1}tr(A^{3})\\
&&+\frac{2}{\upsilon^2}|A|^{2}g^{pq}u_{pt}u_{qt}
+2g^{ij}g^{kl}\frac{\partial h_{ik}}{\partial t}\frac{\partial
h_{jl}}{\partial t}
-8g^{im}g^{jn}g^{kl}h_{jl}\frac{\partial g_{mn}}{\partial t}\frac{\partial h_{ik}}{\partial t}\\
&&-4g^{im}g^{jn}g^{kl}h_{ik}h_{jl}u_{mt}u_{nt}
+\frac{4}{\upsilon}g^{ij}g^{kl}h_{jl}\frac{\partial\Gamma^{p}_{ik}}{\partial t}u_{pt}\\
&&+2g^{im}h_{ik}h_{jl}\frac{\partial g_{pq}}{\partial t}\frac{\partial g_{mn}}{\partial t}(2g^{jp}g^{nq}g^{kl}+g^{jn}g^{kp}g^{lq})\\
&=&H^{-2}\triangle(|A|^{2})-2H^{-2}|\nabla A|^{2}+2H^{-2}|A|^{4}-4H^{-2}|\nabla H|^{2}-4H^{-1}tr(A^{3})\\
&&+2g^{ij}g^{kl}\frac{\partial h_{ik}}{\partial t}\frac{\partial
h_{jl}}{\partial t}
-8g^{im}g^{jn}g^{kl}h_{jl}\frac{\partial g_{mn}}{\partial t}\frac{\partial h_{ik}}{\partial t}\\
&&+2g^{im}h_{ik}h_{jl}\frac{\partial g_{pq}}{\partial t}\frac{\partial g_{mn}}{\partial t}(2g^{jp}g^{nq}g^{kl}+g^{jn}g^{kp}g^{lq})\\
&&+\frac{2}{\upsilon^2}|A|^{2}g^{pq}e^{2\varphi}(\varphi_{pt}+\varphi_{p}\varphi_{t})(\varphi_{qt}+\varphi_{q}\varphi_{t})
+\frac{4}{\upsilon}g^{ij}g^{kl}h_{jl}\frac{\partial\Gamma^{p}_{ik}}{\partial t}e^{\varphi}(\varphi_{pt}+\varphi_{p}\varphi_{t})\\
&&-4g^{im}g^{jn}g^{kl}h_{ik}h_{jl}e^{2\varphi}(\varphi_{mt}+\varphi_{m}\varphi_{t})(\varphi_{nt}+\varphi_{n}\varphi_{t}),
\end{eqnarray*}
which completes the proof of Lemma \ref{lemma4-6}.
\end{proof}

As we can seen from \emph{complicated} evolution equations in this
section, it is difficult to get gradient estimates and higher-order
estimates for the mean curvature and the second fundamental forms,
which leads to the result that so far we cannot say something about
the convergence of the HIMCF (\ref{IHMCF}) and also the hyperbolic
flows (\ref{IHMCF-1}), (\ref{IHMCF-2}). However, for the lower
dimensional case (i.e., the HIMCF in the plane $\mathbb{R}^2$), we
can get the expanding and convergent conclusions, which will be
shown clearly in the following section.

\section{HIMCF in the plane $\mathbb{R}^2$}
\renewcommand{\thesection}{\arabic{section}}
\renewcommand{\theequation}{\thesection.\arabic{equation}}
\setcounter{equation}{0}

\subsection{The short-time existence}

Consider a family of closed plane curves
$F:\mathbb{S}^{1}\times[0,T)\rightarrow\mathbb{R}^2$ which satisfy
the following IVP
\begin{eqnarray}\label{curve flow}
\left\{\begin{array}{lll}
\frac{\partial^{2}}{\partial
t^{2}}F(u,t)=k^{-1}(u,t)\vec{\nu}(u,t)-\nabla\rho(u,t),~~~~\forall
u\in\mathbb{S}^{1},~t\in[0,T)\\[2mm]
\frac{\partial F}{\partial t}(\cdot,0)=f(u)\vec{\nu}_{0},\\[2mm]
 F(\cdot,0)=F_{0}, &\quad
\end{array}
\right.
\end{eqnarray}
where $k(u,t)$ and $\vec{\nu}$ are the curvature and the unit
outward normal vector of the plane curve $F(u, t)$ respectively,
$f(u)\in C^{\infty}(\mathbb{S}^1)$ is the initial normal velocity,
and $\vec{\nu}_{0}$ is the unit outward normal vector of the smooth
\emph{strictly convex} plane curve $F_{0}(u)$. Besides, $\nabla\rho$
is defined by
\begin{eqnarray*}
\nabla\rho:=\left\langle\frac{\partial^{2}F}{\partial s\partial t},
\frac{\partial F}{\partial t}\right\rangle\vec{T}(u,t),
\end{eqnarray*}
where, by abuse of a notation, $\langle,\rangle$ denotes the
standard Euclidean metric in $\mathbb{R}^{2}$ also, and $\vec{T}$,
$s$ are the unit tangent vector of $F(u,t)$ and the arc-length
parameter respectively.

Now, we would like to show that the HIMCF (\ref{curve flow}) is a
normal flow. However, before that, we need the following definition
which has been mentioned in \cite{klw,m1}.

\begin{definition}
A curve $F:\mathbb{S}^{1}\times [0,T)\rightarrow\mathbb{R}^{2}$
evolves normally if and only if its tangential velocity vanishes.
\end{definition}

\begin{lemma} \label{ADD}
The hyperbolic curvature flow (\ref{curve flow}) is a normal flow.
\end{lemma}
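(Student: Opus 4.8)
The plan is to write the velocity in the moving Frenet frame as $\frac{\partial F}{\partial t}=\alpha\vec{T}+\beta\vec{\nu}$, where $\alpha:=\langle\frac{\partial F}{\partial t},\vec{T}\rangle$ is the tangential speed and $\beta:=\langle\frac{\partial F}{\partial t},\vec{\nu}\rangle$ is the normal speed, and to show that $\alpha\equiv0$ for all $t$. By the definition of normal evolution this is precisely the assertion of the lemma. The initial data already give $\alpha(\cdot,0)=\langle f(u)\vec{\nu}_{0},\vec{T}(\cdot,0)\rangle=0$, since $\vec{\nu}_{0}\perp\vec{T}(\cdot,0)$; so it suffices to derive an evolution equation that forces $\alpha$ to remain zero.

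First I would record how the unit tangent evolves in time. Writing $\vec{T}=v^{-1}F_{u}$ with $v:=|F_{u}|$ and $\partial_{s}=v^{-1}\partial_{u}$, a direct differentiation gives $\frac{\partial\vec{T}}{\partial t}=-\frac{v_{t}}{v}\vec{T}+\frac{\partial^{2}F}{\partial s\partial t}$. Since $\langle\vec{T},\vec{T}\rangle=1$ forces $\frac{\partial\vec{T}}{\partial t}\perp\vec{T}$, pairing this identity with $\vec{T}$ yields the useful relation $\langle\frac{\partial^{2}F}{\partial s\partial t},\vec{T}\rangle=\frac{v_{t}}{v}$ (equivalently $\langle F_{ut},F_{u}\rangle=vv_{t}$), and hence the tangential parts cancel, leaving $\frac{\partial\vec{T}}{\partial t}=\langle\frac{\partial^{2}F}{\partial s\partial t},\vec{\nu}\rangle\,\vec{\nu}$ purely normal.

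Then I would differentiate $\alpha$ in time via $\frac{\partial\alpha}{\partial t}=\langle\frac{\partial^{2}F}{\partial t^{2}},\vec{T}\rangle+\langle\frac{\partial F}{\partial t},\frac{\partial\vec{T}}{\partial t}\rangle$ and substitute the flow equation (\ref{curve flow}). The term $k^{-1}\vec{\nu}$ drops out against $\vec{T}$, while the forcing term contributes $-\langle\frac{\partial^{2}F}{\partial s\partial t},\frac{\partial F}{\partial t}\rangle$. Expanding $\frac{\partial F}{\partial t}=\alpha\vec{T}+\beta\vec{\nu}$ in the two surviving inner products and invoking the identities $\langle\frac{\partial^{2}F}{\partial s\partial t},\vec{T}\rangle=\frac{v_{t}}{v}$ and $\frac{\partial\vec{T}}{\partial t}=\langle\frac{\partial^{2}F}{\partial s\partial t},\vec{\nu}\rangle\vec{\nu}$ from the previous step, the normal-speed cross terms $\beta\langle\frac{\partial^{2}F}{\partial s\partial t},\vec{\nu}\rangle$ cancel exactly, leaving the clean linear equation $\frac{\partial\alpha}{\partial t}=-\frac{v_{t}}{v}\,\alpha$. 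This is a first-order linear homogeneous ODE in $t$ for each fixed $u$; with $\alpha(\cdot,0)=0$, integration gives $\alpha(u,t)=\alpha(u,0)\exp\!\left(-\int_{0}^{t}\frac{v_{t}}{v}\,d\tau\right)\equiv0$, so the tangential velocity vanishes identically and the flow is normal.

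The main obstacle is the bookkeeping for $\frac{\partial\vec{T}}{\partial t}$ under the time-dependent arc-length parametrization: one must keep track of the fact that $v$ (hence $\partial_{s}=v^{-1}\partial_{u}$) depends on $t$ and that $\partial_{s}$ and $\partial_{t}$ do not commute. The decisive point is the cancellation of the $\beta$-terms in $\frac{\partial\alpha}{\partial t}$, which is exactly what the definition of the forcing term $\nabla\rho=\langle\frac{\partial^{2}F}{\partial s\partial t},\frac{\partial F}{\partial t}\rangle\vec{T}$ is engineered to produce; once that structure is recognized, the homogeneous ODE and the zero-initial-data conclusion are immediate.
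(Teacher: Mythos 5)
Your proof is correct and rests on the same mechanism as the paper's: the forcing term $\nabla\rho$ is built so that the tangential component of the velocity satisfies a homogeneous equation with zero initial data. The only difference is cosmetic --- the paper tracks the unnormalized quantity $\left\langle\frac{\partial F}{\partial t},\frac{\partial F}{\partial u}\right\rangle$ and shows it is exactly conserved, whereas you track the normalized tangential speed $\alpha=\left\langle\frac{\partial F}{\partial t},\vec{T}\right\rangle$ and obtain the equivalent linear ODE $\alpha_{t}=-\frac{v_{t}}{v}\,\alpha$.
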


\begin{proof}
By a direct computation, we have
\begin{eqnarray*}
\frac{d}{dt}\left\langle\frac{\partial F}{\partial t},
\frac{\partial F}{\partial
u}\right\rangle&=&\left\langle\frac{\partial^{2} F}{\partial t^{2}},
\frac{\partial F}{\partial u}\right\rangle
+\left\langle\frac{\partial F}{\partial t}, \frac{\partial^{2} F}{\partial t\partial u}\right\rangle\\
&=&\left\langle-\nabla\rho, \frac{\partial F}{\partial
u}\right\rangle +\left\langle\frac{\partial F}{\partial t},
\frac{\partial^{2} F}{\partial t\partial
u}\right\rangle\\
&=&-\left\langle\frac{\partial^{2}F}{\partial s\partial t},
\frac{\partial F}{\partial
t}\right\rangle\cdot\left\langle\frac{\partial F}{\partial
s},\frac{\partial F}{\partial u}\right\rangle
+\left\langle\frac{\partial F}{\partial t}, \frac{\partial^{2}
F}{\partial t\partial u}\right\rangle\\
 &=&0,
\end{eqnarray*}
which, together with the fact that the initial velocity of the IVP
(\ref{curve flow}) is normal, implies the conclusion of Lemma
\ref{ADD}.
\end{proof}

By the IVP (\ref{curve flow}) and Lemma \ref{ADD}, it is easy to get
the following
\begin{eqnarray}\label{norma flow}
\left\{\begin{array}{ll}
\frac{\partial F}{\partial t}(u, t)=\sigma(u, t)\vec{\nu}\\[2mm]
 F(u,0)=F_{0}(u), &\quad
\end{array}
\right.
\end{eqnarray}
where $\sigma(u,t)=f(u)+\int_{0}^{t}k^{-1}(u,\xi)d\xi$. So, we have
\begin{eqnarray*}
\frac{\partial\sigma}{\partial t}=k^{-1}(u,t), \qquad
\sigma\frac{\partial\sigma}{\partial
s}=\left\langle\frac{\partial^{2}F}{\partial s\partial
t},\frac{\partial F}{\partial t}\right\rangle,
\end{eqnarray*}
where $s=s(\cdot, t)$ is the arc-length parameter of curve $F(\cdot,
t):\mathbb{S}^{1}\rightarrow \mathbb{R}^{2}$. By arc-length formula,
we have
\begin{eqnarray*}
\frac{\partial}{\partial s}=\frac{1}{\sqrt{\left(\frac{\partial
x}{\partial u}\right)^{2} +\left(\frac{\partial y}{\partial
u}\right)^{2}}}\frac{\partial}{\partial u}
=\frac{1}{\left|\frac{\partial F}{\partial
u}\right|}\frac{\partial}{\partial u}
:=\frac{1}{\upsilon}\frac{\partial}{\partial u},
 \end{eqnarray*}
  where $(x,y)$ is
the cartesian coordinates, and $\upsilon=\sqrt{\left(\frac{\partial
x}{\partial u}\right)^{2}+\left(\frac{\partial y}{\partial
u}\right)^{2}}=\left|\frac{\partial F}{\partial u}\right|$. For the
orthogonal field $\{\vec{\nu},\vec{T}\}$ of $\mathbb{R}^{2}$, by
Frenet formula, we have
\begin{eqnarray} \label{FORMULA-1}
\frac{\partial\vec{T}}{\partial s}=-k\vec{\nu},\qquad
\frac{\partial\vec{\nu}}{\partial s}=k\vec{T}.
\end{eqnarray}
Denote by $\theta$ the unit inner normal angle for a convex closed
curve $F:\mathbb{S}^{1}\rightarrow\mathbb{R}^{2}$. Then, we have
 \begin{eqnarray*}
\vec{\nu}=(\cos\theta, \sin\theta),\qquad \vec{T}=(-\sin\theta,
\cos\theta).
 \end{eqnarray*}
Together with (\ref{FORMULA-1}),  we have
\begin{eqnarray*}
\frac{\partial\vec{T}}{\partial
s}=\frac{\partial\vec{T}}{\partial\theta}
\frac{\partial\theta}{\partial
s}=-\vec{\nu}\frac{\partial\theta}{\partial s}=-k\vec{\nu},
 \end{eqnarray*}
which implies $\frac{\partial\theta}{\partial s}=k$. Moreover,
\begin{eqnarray} \label{FORMULA-2}
\frac{\partial\vec{\nu}}{\partial t}
=\frac{\partial\vec{\nu}}{\partial\theta}\frac{\partial\theta}{\partial
t} =\frac{\partial\theta}{\partial t}\vec{T}, \qquad
\frac{\partial\vec{T}}{\partial t}
=\frac{\partial\vec{T}}{\partial\theta}\frac{\partial\theta}{\partial
t} =-\frac{\partial\theta}{\partial t}\vec{\nu}.
 \end{eqnarray}

\begin{lemma}\label{evolution of v}
The derivative of $\upsilon$ with respect to $t$ is
$\frac{\partial\upsilon}{\partial t}=k\sigma\upsilon.$
\end{lemma}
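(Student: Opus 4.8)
The plan is to start from the identity $\upsilon^{2}=\left\langle\frac{\partial F}{\partial u},\frac{\partial F}{\partial u}\right\rangle$, which is just the definition $\upsilon=\left|\frac{\partial F}{\partial u}\right|$ squared, and differentiate it with respect to $t$. This converts the statement into a computation about $\frac{\partial^{2}F}{\partial t\partial u}$. Concretely, differentiating gives
\begin{eqnarray*}
2\upsilon\frac{\partial\upsilon}{\partial t}
=\frac{\partial}{\partial t}\left\langle\frac{\partial F}{\partial u},\frac{\partial F}{\partial u}\right\rangle
=2\left\langle\frac{\partial^{2}F}{\partial t\partial u},\frac{\partial F}{\partial u}\right\rangle,
\end{eqnarray*}
so everything reduces to evaluating the right-hand inner product.

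The next step is to use the fact, established in (\ref{norma flow}) via Lemma \ref{ADD}, that the flow is normal, i.e. $\frac{\partial F}{\partial t}=\sigma\vec{\nu}$. Commuting the mixed partials and applying the product rule yields $\frac{\partial^{2}F}{\partial t\partial u}=\frac{\partial}{\partial u}(\sigma\vec{\nu})=\sigma_{u}\vec{\nu}+\sigma\,\frac{\partial\vec{\nu}}{\partial u}$. I would then convert the $u$-derivatives into arc-length derivatives using the relation $\frac{\partial}{\partial u}=\upsilon\frac{\partial}{\partial s}$ (the inverse of the arc-length formula $\frac{\partial}{\partial s}=\frac{1}{\upsilon}\frac{\partial}{\partial u}$ recorded just before the Frenet formulas). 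In particular $\frac{\partial F}{\partial u}=\upsilon\vec{T}$, and by the Frenet formula (\ref{FORMULA-1}) we have $\frac{\partial\vec{\nu}}{\partial s}=k\vec{T}$, hence $\frac{\partial\vec{\nu}}{\partial u}=\upsilon k\vec{T}$.

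Substituting these into the inner product and invoking the orthonormality of $\{\vec{\nu},\vec{T}\}$ finishes the argument: the $\sigma_{u}\vec{\nu}$ term is annihilated against $\upsilon\vec{T}$, leaving
\begin{eqnarray*}
\left\langle\frac{\partial^{2}F}{\partial t\partial u},\frac{\partial F}{\partial u}\right\rangle
=\left\langle\sigma_{u}\vec{\nu}+\sigma\upsilon k\vec{T},\,\upsilon\vec{T}\right\rangle
=\sigma\upsilon^{2}k.
\end{eqnarray*}
Combining with the first display gives $2\upsilon\frac{\partial\upsilon}{\partial t}=2\sigma\upsilon^{2}k$, and dividing by $2\upsilon$ yields the claimed formula $\frac{\partial\upsilon}{\partial t}=k\sigma\upsilon$. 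There is no real obstacle here; the only point requiring mild care is the bookkeeping when passing between $\partial_{u}$ and $\partial_{s}$ (since $\upsilon$ itself is $t$-dependent, one must commute the $t$- and $u$-derivatives \emph{before} introducing the arc-length conversion), after which the normality of the flow and the orthogonality $\vec{\nu}\perp\vec{T}$ make the normal-component contribution vanish automatically.
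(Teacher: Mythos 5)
Your proof is correct and follows essentially the same route as the paper's: both differentiate $\upsilon^{2}=\left\langle\frac{\partial F}{\partial u},\frac{\partial F}{\partial u}\right\rangle$ in $t$, substitute $\frac{\partial F}{\partial t}=\sigma\vec{\nu}$ from the normal flow, and evaluate the resulting inner product using $\frac{\partial F}{\partial u}=\upsilon\vec{T}$ together with the Frenet formula $\frac{\partial\vec{\nu}}{\partial s}=k\vec{T}$ and the conversion $\frac{\partial}{\partial u}=\upsilon\frac{\partial}{\partial s}$. The only cosmetic difference is that you spell out why the $\sigma_{u}\vec{\nu}$ term drops (orthogonality to $\vec{T}$), a step the paper performs silently.
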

\begin{proof}
By a direct computation, we have
\begin{eqnarray*}
\frac{\partial}{\partial t}(\upsilon^{2})&=&\frac{\partial}{\partial
t}\left\langle\frac{\partial F}{\partial u},\frac{\partial
F}{\partial u}\right\rangle =2\left\langle\frac{\partial F}{\partial
u},\frac{\partial^{2} F}{\partial t\partial u}\right\rangle
=2\left\langle\left|\frac{\partial F}{\partial u}\right|\vec{T},\frac{\partial}{\partial u}(\sigma\vec{\nu})\right\rangle\\
&=&2\left\langle\upsilon\vec{T},\sigma\frac{\partial\vec{\nu}}{\partial
u}\right\rangle
=2\left\langle\upsilon\vec{T},\sigma\frac{\partial\vec{\nu}}{\partial s}\frac{\partial s}{\partial u}\right\rangle\\
&=&2\left\langle\upsilon\vec{T},\sigma k\vec{T}\upsilon\right\rangle\\
&=&2\upsilon^{2}k\sigma,
\end{eqnarray*}
which implies the conclusion of Lemma \ref{evolution of v}.
\end{proof}

By Lemma \ref{evolution of v}, we can obtain
\begin{eqnarray*}
\frac{\partial^{2}}{\partial t\partial s}=\frac{\partial}{\partial
t}\left(\frac{1}{\upsilon}\frac{\partial}{\partial u}\right)
=-\frac{1}{\upsilon^{2}}\frac{\partial\upsilon}{\partial
t}\frac{\partial}{\partial u}
+\frac{1}{\upsilon}\frac{\partial}{\partial
u}\frac{\partial}{\partial t} =-k\sigma\frac{\partial}{\partial
s}+\frac{\partial^{2}}{\partial s\partial t}.
\end{eqnarray*}
Therefore, together with (\ref{norma flow}), we have
\begin{eqnarray*}
\frac{\partial\vec{T}}{\partial
t}&=&\frac{\partial^{2}F}{\partial t\partial s}\\
&=&-k\sigma\frac{\partial F}{\partial s}+\frac{\partial^{2}F}{\partial s\partial t}\\
&=&-k\sigma\vec{T}+\frac{\partial}{\partial s}(\sigma\vec{\nu})\\
&=&\frac{\partial\sigma}{\partial s}\vec{\nu},
\end{eqnarray*}
which, combining with (\ref{FORMULA-2}), yields
\begin{eqnarray*}
\frac{\partial\sigma}{\partial s}=-\frac{\partial\theta}{\partial
t}, \qquad \frac{\partial\vec{\nu}}{\partial
t}=-\frac{\partial\sigma}{\partial s}\vec{T}.
\end{eqnarray*}

Assume $F: \mathbb{S}^{1}\times [0,T)\rightarrow\mathbb{R}^{2}$ is a
family of convex curves satisfying the flow (\ref{curve flow}), and
we can use the normal angle to parameterize the evolving curve
$F(\cdot,t)$, which will give the notion of support function used to
get the short-time existence of the flow. Set
\begin{eqnarray*}
 \widetilde{F}(\theta,\tau)=F(u(\theta,\tau),t(\theta,\tau)),
\end{eqnarray*}
where $t(\theta,\tau)=\tau$.
 By the chain rule, we have
 \begin{eqnarray*}
0=\frac{\partial\theta}{\partial\tau}=\frac{\partial\theta}{\partial
u}\frac{\partial u}{\partial\tau}+\frac{\partial\theta}{\partial
t},
\end{eqnarray*}
 and then
 \begin{eqnarray*}
\frac{\partial\theta}{\partial t}=-\frac{\partial\theta}{\partial
u}\frac{\partial u}{\partial\tau} =-\frac{\partial\theta}{\partial
s}\frac{\partial s}{\partial u}\frac{\partial u}{\partial\tau}
=-k\upsilon\frac{\partial u}{\partial\tau}.
 \end{eqnarray*}
 Therefore, a direct calculation yields
\begin{eqnarray*}
\frac{\partial\vec{T}}{\partial\tau}&=&\frac{\partial\vec{T}}{\partial
u}\frac{\partial u}{\partial\tau}
+\frac{\partial\vec{T}}{\partial t}\\
&=&\frac{\partial\vec{T}}{\partial s}\frac{\partial s}{\partial
u}\frac{\partial u}{\partial\tau}
-\frac{\partial\theta}{\partial t}\vec{\nu}\\
&=&-\left(k\upsilon\frac{\partial u}{\partial\tau}+\frac{\partial\theta}{\partial t}\right)\vec{\nu}\\
&=&0,
\end{eqnarray*}
and
\begin{eqnarray*}
\frac{\partial\vec{\nu}}{\partial\tau}&=&\frac{\partial\vec{\nu}}{\partial
u}\frac{\partial u}{\partial\tau}
+\frac{\partial\vec{\nu}}{\partial t}\\
&=&\frac{\partial\vec{\nu}}{\partial s}\frac{\partial s}{\partial
u}\frac{\partial u}{\partial\tau}
+\frac{\partial\theta}{\partial t}\vec{T}\\
&=&\left(k\upsilon\frac{\partial u}{\partial\tau}+\frac{\partial\theta}{\partial t}\right)\vec{T}\\
&=&0,
\end{eqnarray*}
which implies $\vec{v}$ and $\vec{T}$ are independent of the
parameter $\tau$.

  Define the support function of the evolving curve $\widetilde{F}(\theta,\tau)=(x(\theta,\tau),y(\theta,\tau))$ as follows
  \begin{eqnarray*}
S(\theta,\tau)=\langle\widetilde{F}(\theta,\tau),
\vec{\nu}\rangle=x(\theta,\tau)\cos\theta+y(\theta,\tau)\sin\theta.
\end{eqnarray*}
Then we have
\begin{eqnarray*}
S_{\theta}(\theta,\tau)=\langle\widetilde{F}(\theta,\tau),
\vec{T}\rangle =-x(\theta,\tau)\sin\theta+y(\theta,\tau)\cos\theta,
\end{eqnarray*}
and
\begin{eqnarray*}
\left\{
\begin{array}{ll}
x(\theta,\tau)=S\cos\theta-S_{\theta}\sin\theta,\\[2mm]
y(\theta,\tau)=S\sin\theta+S_{\theta}\cos\theta. &\quad
\end{array}
\right.
\end{eqnarray*}
By a direct computation, we have
\begin{eqnarray*}
S_{\theta\theta}+S&=&\langle\widetilde{F}_{\theta}(\theta,\tau),
\vec{T}\rangle +\langle\widetilde{F}(\theta,\tau), -\vec{\nu}\rangle
+\langle\widetilde{F}(\theta,\tau), \vec{\nu}\rangle\\
&=&\langle\widetilde{F}_{\theta}(\theta,\tau), \vec{T}\rangle\\
&=&\left\langle\frac{\partial F}{\partial u}\frac{\partial u}{\partial s}\frac{\partial s}{\partial\theta},\vec{T}\right\rangle\\
&=&\frac{1}{k}.
\end{eqnarray*}
The above expression makes sense, since the evolving curve is
strictly convex.

  On the other hand, since $\vec{\nu}$ and $\vec{T}$ are independent of the parameter $\tau$, together with (\ref{norma flow}) and the definition of the support function $S$,
  we can get
\begin{eqnarray*}
S_{\tau}=\left\langle\frac{\partial\widetilde{F}}{\partial\tau},\vec{\nu}\right\rangle
=\left\langle\frac{\partial F}{\partial u}\frac{\partial
u}{\partial\tau}+\frac{\partial F}{\partial
t},\vec{\nu}\right\rangle =\left\langle\frac{\partial F}{\partial
t},\vec{\nu}\right\rangle=\widetilde{\sigma}(\theta,\tau),
\end{eqnarray*}
and
\begin{eqnarray*}
S_{\tau\tau}&=&\left\langle\frac{\partial^{2}\widetilde{F}}{\partial\tau^{2}},\vec{\nu}\right\rangle\\
&=&\left\langle\frac{\partial F}{\partial
u}\frac{\partial^{2}u}{\partial\tau^{2}}
+\frac{\partial^{2}F}{\partial u^{2}}\left(\frac{\partial
u}{\partial\tau}\right)^{2} +2\frac{\partial^{2}F}{\partial
u\partial t}\frac{\partial u}{\partial\tau}
+\frac{\partial^{2}F}{\partial t^{2}},\vec{\nu}\right\rangle\\
&=&\left\langle\frac{\partial^{2}F}{\partial
u^{2}}\left(\frac{\partial u}{\partial\tau}\right)^{2}
+\frac{\partial^{2}F}{\partial u\partial t}\frac{\partial
u}{\partial\tau},\vec{\nu}\right\rangle
+\left\langle\frac{\partial^{2}F}{\partial u\partial
t}\frac{\partial u}{\partial\tau}
+\frac{\partial^{2}F}{\partial t^{2}},\vec{\nu}\right\rangle\\
&=&\frac{\partial u}{\partial\tau}\left\langle\left(\frac{\partial
F}{\partial u}\right)_{\tau},\vec{\nu}\right\rangle
+\left\langle\frac{\partial^{2}F}{\partial u\partial
t}\frac{\partial u}{\partial\tau}
+\frac{\partial^{2}F}{\partial t^{2}},\vec{\nu}\right\rangle\\
&=&\left\langle\frac{\partial^{2}F}{\partial u\partial
t}\frac{\partial u}{\partial\tau}
+\frac{\partial^{2}F}{\partial t^{2}},\vec{\nu}\right\rangle\\
&=&\left\langle\frac{\partial^{2}F}{\partial u\partial
t}\frac{\partial u}{\partial\tau},\vec{\nu}\right\rangle+k^{-1}.
\end{eqnarray*}
Since $F: \mathbb{S}^{1}\times [0,T)\rightarrow \mathbb{R}^{2}$ is a
normal flow (see Lemma \ref{ADD}), which implies
\begin{eqnarray*}
\left\langle\frac{\partial F}{\partial
t},\vec{T}\right\rangle(u,t)=0,
\end{eqnarray*}
for all $t\in [0,T)$, we have
\begin{eqnarray*}
S_{\tau\theta}=\frac{\partial}{\partial\tau}\left\langle\widetilde{F},\vec{T}\right\rangle
=\left\langle\frac{\partial F}{\partial u}\frac{\partial
u}{\partial\tau} +\frac{\partial F}{\partial
t},\vec{T}\right\rangle=\upsilon\frac{\partial u}{\partial\tau}
\end{eqnarray*}
and
\begin{eqnarray*}
S_{\theta\tau}=\frac{\partial}{\partial\theta}\left\langle\frac{\partial
F}{\partial t},\vec{\nu}\right\rangle
=\left\langle\frac{\partial^{2}F}{\partial u\partial
t}\frac{\partial u}{\partial\theta},\vec{\nu}\right\rangle
=\left\langle\frac{\partial^{2}F}{\partial u\partial
t}\frac{\partial u}{\partial s}\frac{\partial s}{\partial\theta},
\vec{\nu}\right\rangle=\frac{1}{k\upsilon}\left\langle\frac{\partial^{2}F}{\partial
u\partial t},\vec{\nu}\right\rangle
\end{eqnarray*}
by straightforward computation. Hence, $S(\theta,\tau)$ satisfies
\begin{eqnarray*}
S_{\tau\tau}=\left\langle\frac{\partial^{2}F}{\partial u\partial
t}\frac{\partial u}{\partial\tau},\vec{\nu}\right\rangle+k^{-1}
=k\upsilon S_{\theta\tau}\frac{\partial
u}{\partial\tau}+k^{-1}=kS_{\theta\tau}^{2}+k^{-1},
 \end{eqnarray*}
which is equivalent to
 \begin{eqnarray*}
S_{\tau\tau}=\frac{S_{\theta\tau}^{2}}{S_{\theta\theta}+S}+(S_{\theta\theta}+S),
\qquad \forall (\theta,\tau)\in\mathbb{S}^{1}\times[0,T).
\end{eqnarray*}
Together with $(\ref{curve flow})$, we know that
\begin{eqnarray}\label{s flow}
\left\{\begin{array}{lll}
SS_{\tau\tau}+S_{\tau\tau}S_{\theta\theta}-S_{\theta\tau}^{2}-(S_{\theta\theta}+S)^2=0,\\[2mm]
S(\theta,0)=h(\theta),\\[2mm]
S_{\tau}(\theta,0)=\widetilde{f}(\theta),&\quad
\end{array}
\right.
\end{eqnarray}
where $h(\theta)$ and $\widetilde{f}(\theta)$ are the support
function of the initial curve $F_{0}(u(\theta))$ and the initial
velocity of this initial curve respectively.

 Similar to the high-dimensional case mentioned in Section 2, here we would like to get the short-time existence of the IVP (\ref{s flow}) by the linearization method.
 Let
\begin{eqnarray*}
Q(S_{\theta\theta}, S_{\theta\tau},
S):=\frac{S_{\theta\tau}^{2}}{S_{\theta\theta}+S}+(S_{\theta\theta}+S),
 \end{eqnarray*}
 then we have
  \begin{eqnarray} \label{EVEQ}
S_{\tau\tau}=\frac{\partial Q}{\partial
S_{\theta\theta}}S_{\theta\theta}+\frac{\partial Q}{\partial
S_{\theta\tau}}S_{\theta\tau} +\frac{\partial Q}{\partial S}S,
 \end{eqnarray}
  where
\begin{eqnarray*}
\frac{\partial Q}{\partial
S_{\theta\theta}}=1-\frac{S_{\theta\tau}^{2}}{(S_{\theta\theta}+S)^{2}},
\qquad
 \frac{\partial Q}{\partial
S_{\theta\tau}}=\frac{2S_{\theta\tau}}{S_{\theta\theta}+S}.
 \end{eqnarray*}
 Consider the coefficient matrix of terms in (\ref{EVEQ}) involving
 second-order derivatives of $S$, and then we have
\begin{gather*}
\begin{pmatrix}
-1 & \frac{S_{\theta\tau}}{S_{\theta\theta}+S}\\
\frac{S_{\theta\tau}}{S_{\theta\theta}+S} &
1-\frac{S_{\theta\tau}^{2}}{(S_{\theta\theta}+S)^{2}}
\end{pmatrix}
\end{gather*}
which, by a suitable linear transformation, we have
\begin{gather*}
\begin{pmatrix}
-1 & 0\\
0 & 1
\end{pmatrix}
.
\end{gather*}
So (\ref{EVEQ}) is a second-order hyperbolic PDE. By the standard
theory of second-order hyperbolic PDEs, we have the following.

\begin{theorem} \label{main-2}(Local existences and uniqueness) Assume that
$F_{0}$ is a smooth strictly convex closed plane curve. Then there
exist a positive $T_{\rm{max}}>0$ and a family of strictly convex
closed curves $F(u,t)$  satisfying the IVP $(\ref{curve flow})$ on
$\mathbb{S}^{1}\times[0,T_{\rm{max}})$, provided $f(u)$ is a smooth
function on $\mathbb{S}^{1}$.
\end{theorem}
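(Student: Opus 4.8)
The plan is to exploit the strict convexity of $F_0$ to reparametrize the evolving curves by the unit inner normal angle $\theta$ and to trade the vector-valued flow for a scalar equation governing the support function $S(\theta,\tau)$. All the geometric reductions required for this are already in place: Lemma \ref{ADD} shows that \eqref{curve flow} is a normal flow, so it assumes the form \eqref{norma flow}, and the subsequent support-function computations reduce \eqref{curve flow} to the scalar IVP \eqref{s flow}. Thus the entire problem collapses to solving \eqref{s flow} for $S$ on a short time interval and then reconstructing the curve $F$ from $S$.

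First I would regard \eqref{EVEQ} as a quasilinear second-order PDE in $S$, with principal part controlled by $Q(S_{\theta\theta},S_{\theta\tau},S)$. The coefficient matrix of the second-order terms, as computed just above the statement, is congruent to $\mathrm{diag}(-1,1)$ precisely when $S_{\theta\theta}+S=1/k>0$; this holds at $\tau=0$ because $F_0$ is strictly convex, so the equation is uniformly hyperbolic on a neighborhood of $\tau=0$. The initial data $h(\theta)$ and $\widetilde{f}(\theta)$ are smooth since $F_0$ and $f$ are smooth. Linearizing around a reference solution, exactly as in the high-dimensional argument of Section 2, and invoking the standard existence-and-uniqueness theory for second-order linear hyperbolic PDEs together with an iteration scheme to absorb the nonlinearity, yields a unique smooth solution $S\in C^{\infty}\left(\mathbb{S}^{1}\times[0,T)\right)$ for some $T>0$.

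Having produced $S$, I would recover the evolving curve via the explicit formulas $x=S\cos\theta-S_{\theta}\sin\theta$ and $y=S\sin\theta+S_{\theta}\cos\theta$, and check that the resulting family $F(u,t)$ solves \eqref{curve flow} by reversing the derivation of \eqref{s flow}. Since $k=(S_{\theta\theta}+S)^{-1}$ depends continuously on $S$ and its derivatives and $k>0$ at $\tau=0$, the curvature remains strictly positive on a possibly smaller interval $[0,T_{\mathrm{max}})$; hence each $F(\cdot,t)$ is a strictly convex closed curve there, which is exactly the assertion of the theorem.

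The main obstacle is controlling the degeneracy of the principal symbol: the hyperbolicity of \eqref{EVEQ} hinges on $S_{\theta\theta}+S$ staying strictly positive, and the nonlinearity in $Q$ blows up as this quantity tends to zero. Securing a uniform positive lower bound for $S_{\theta\theta}+S$ (equivalently, a uniform upper bound for $k$) on a definite time interval, so that the standard theory applies with uniform constants, is the delicate point; it is precisely the short-time preservation of strict convexity that simultaneously underpins the solvability of \eqref{s flow} and guarantees the geometric meaning of the reconstructed curve.
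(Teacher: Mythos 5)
Your proposal follows essentially the same route as the paper: reduce the flow to the scalar support-function equation (\ref{s flow}) via the normal-flow property, verify that the coefficient matrix of the second-order terms in (\ref{EVEQ}) has hyperbolic signature when $S_{\theta\theta}+S=k^{-1}>0$, and conclude by linearization and the standard theory of second-order hyperbolic PDEs, exactly as the paper does by analogy with its Section 2 argument. Your additional remarks on reconstructing $F$ from $S$ and on the short-time persistence of $S_{\theta\theta}+S>0$ only make explicit steps the paper leaves implicit, so there is no substantive difference.
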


\subsection{Expansion and Convergence}

As in Section 3, we would like to understand further and then try to
get more evolution information about the hyperbolic flow (\ref{curve
flow}) through the following interesting example. For simplicity, we
replace $\tau$ by $t$.

\begin{example}\label{radius}
Let $F(u,t)$ be a family of round circles in $\mathbb{R}^2$ with the
radius $r(t)$ centered at the origin, i.e.,
$$F(u,t)=r(t)(\cos\theta, \sin\theta).$$
Then the support function $S$ and the curvature $k$ are given by
 \begin{eqnarray*}
S(\theta,t)=r(t), \qquad k(\theta,t)=\frac{1}{r(t)},
 \end{eqnarray*}
which implies IVP $(\ref{curve flow})$ becomes
\begin{equation}\label{evolution of r}
\left\{\begin{array}{ll}
r_{tt}=r(t),\\[2mm]
r(0)=r_{0}>0,~~~~r_{t}(0)=r_{1}.&
 \end{array}\right.
\end{equation}
 Solving (\ref{evolution of r}) directly yields
\begin{eqnarray*}
r(t)=\frac{r_{0}+r_{1}}{2}e^{t}+\frac{r_{0}-r_{1}}{2}e^{-t}
\end{eqnarray*}
on $[0,T_{\rm{max}})$ for some $0<T_{\rm{max}}\leqslant\infty$. As
Example \ref{ex1}, we know that
\begin{itemize}
\item if $r_{0}+r_{1}>0$, then $T_{\rm{max}}=\infty$ (i.e., the flow exists for all the time).
Moreover, if furthermore, $r_{0}-r_{1}\leqslant0$, the evolving
curves \emph{expand} exponentially to the infinity, and if
furthermore, $r_{0}-r_{1}>0$, then the evolving curves
\emph{converge} first for a while and then \emph{expand}
exponentially to the infinity;

\item if $r_{0}+r_{1}=0$, then
$r(t)=r_{0}e^{-t}$, which implies $T_{\rm{max}}=\infty$ and the
evolving curves \emph{converge} to a single point as time tends to
infinity;

\item if $r_{0}+r_{1}<0$, then
$T_{\rm{max}}=\frac{1}{2}\ln\left(\frac{r_{1}-r_{0}}{r_{1}+r_{0}}\right)$
and the evolving curves \emph{converge} to a single point as
$t\rightarrow T_{\rm{max}}$.
\end{itemize}
\end{example}

\begin{remark}
\rm{ From the above example, we know that although the initial curve
is so special (i.e., circles), the evolution of the flow (\ref{curve
flow}) is complicated which deeply depends on the initial values of
the flow. It seems like it is very difficult to accurately describe
the evolution of the HIMCF (\ref{curve flow}) as time tends to the
maximal existence time (i.e., as $t\rightarrow T_{\rm{max}}$).
Fortunately, using the containment principle we have derived (see
Proposition \ref{containment} below), we can overcome this
difficulty.}
\end{remark}

 In order to get the containment principle, we need to use the maximum principle for a strip (see Lemma \ref{LEMMA11} below) which has been shown
 in \cite{pw}. However, in order to state the conclusion of Lemma \ref{LEMMA11} clearly, we need to
 introduce some preliminaries first, which has been mentioned in \cite{klw}.
  Consider the general second-order operator $L$
\begin{eqnarray}\label{operator L}
L[\omega]:=a\omega_{\theta\theta}+2b\omega_{\theta
t}+c\omega_{tt}+d\omega_{\theta}+e\omega_{t}
\end{eqnarray}
where $a$, $b$, $c$ are twice continuously differentiable and $d$,
$e$ are continuously differentiable of $\theta$ and $t$. If
$b^{2}-ac>0$ at a point $(\theta,t)$, then the operator $L$ is said
to be hyperbolic at this point. It is hyperbolic in a domain $D$ if
it is hyperbolic at each point of $D$, and uniformly hyperbolic in a
domain $D$ if there exists a constant $\mu$ such that
$b^{2}-ac\geqslant\mu>0$ in $D$.

Assume that $\omega$ and the conormal derivative
 \begin{eqnarray*}
 \frac{\partial\omega}{\partial\nu}\triangleq-b\frac{\partial\omega}{\partial\theta}-c\frac{\partial\omega}{\partial
t}
\end{eqnarray*}
 are given at $t=0$. The
adjoint operator $L^{*}$ associated with $L$ can be defined by
\begin{eqnarray*}
L^{*}[\omega]&\triangleq&(a\omega)_{\theta\theta}+2(b\omega)_{\theta t}+(c\omega)_{tt}-(d\omega)_{\theta}-(e\omega)_{t}\\
&=&a\omega_{\theta\theta}+2b\omega_{\theta t}+c\omega_{tt}+(2a_{\theta}+2b_{t}-d)\omega_{\theta}+(2b_{\theta}+2c_{t}-e)\omega_{t}\\
&&+(a_{\theta\theta}+2b_{\theta t}+c_{tt}-d_{\theta}-e_{t})\omega.
\end{eqnarray*}
Set
\begin{eqnarray*}
K_{+}(\theta,t):&=&\left(\sqrt{b^{2}-ac}\right)_{\theta}+\frac{b}{c}\left(\sqrt{b^{2}-ac}\right)_{\theta}+\frac{1}{c}\left(b_{\theta}+c_{t}-e\right)\sqrt{b^{2}-ac}\\
&&+\left[-\frac{1}{2c}(b^{2}-ac)_{\theta}+a_{\theta}+b_{t}-d-\frac{b}{c}(b_{\theta}+c_{t}-e)\right],
\end{eqnarray*}
and
\begin{eqnarray*}
K_{-}(\theta,t):&=&\left(\sqrt{b^{2}-ac}\right)_{\theta}+\frac{b}{c}\left(\sqrt{b^{2}-ac}\right)_{\theta}+\frac{1}{c}\left(b_{\theta}+c_{t}-e\right)\sqrt{b^{2}-ac}\\
&&-\left[-\frac{1}{2c}(b^{2}-ac)_{\theta}+a_{\theta}+b_{t}-d-\frac{b}{c}(b_{\theta}+c_{t}-e)\right].
\end{eqnarray*}
 As shown in \cite[pp. 502-503]{klw}, we know that for
\begin{eqnarray} \label{DL}
l(\theta, t):=1+\alpha t-\beta t^{2}
\end{eqnarray}
with $\alpha$, $\beta$ sufficiently large such that
\begin{equation}  \label{cofficients}
\left\{\begin{array}{llll}
2\sqrt{b^{2}-ac}(\alpha-2\beta t)+(1+\alpha t-\beta t^{2})K_{+}\geqslant 0\\[2mm]
2\sqrt{b^{2}-ac}(\alpha-2\beta t)+(1+\alpha t-\beta t^{2})K_{-}\geqslant 0\\[2mm]
-2c\beta+(2b_{\theta}+2c_{t}-e)(\alpha-2\beta t)+\\
\qquad \qquad (a_{\theta\theta}+ 2b_{\theta
t}+c_{tt}-d_{\theta}-e_{t} +g)(1 + \alpha t+\beta t^{2})\geqslant0&
 \end{array}\right.
\end{equation}
and $l(\theta,t)>0$ on a sufficiently small strip $0\leqslant
t\leqslant t_{0}$, the hyperbolic operator defined by (\ref{operator
L}) satisfies
\begin{equation*}
\left\{\begin{array}{lll}
2\sqrt{b^{2}-ac}\left[l_{t}-\frac{1}{c}\left(\sqrt{b^{2}-ac}-b\right)l_{\theta}\right]+lK_{+}\geqslant 0\\[2mm]
2\sqrt{b^{2}-ac}\left[l_{t}+\frac{1}{c}\left(\sqrt{b^{2}-ac}-b\right)l_{\theta}\right]+lK_{-}\geqslant 0\\[2mm]
(L^{\ast}+g)[w]\geqslant0&
 \end{array}\right.
\end{equation*}
on the same strip $0\leqslant t\leqslant t_{0}$. It is easy to check
that with $l$ defined as (\ref{DL}), the condition on the conormal
derivative
\begin{eqnarray*}
\frac{\partial\omega}{\partial\nu}+(b_{\theta}+c_{t}-e+c\alpha)\omega\leqslant0,
\end{eqnarray*}
becomes at $t=0$. Besides, if we select a constant $M$ so large that
\begin{eqnarray}\label{M}
M\geqslant -(b_{\theta}+c_{t}-e+c\alpha), \qquad
\rm{on~~\Gamma_{0}},
\end{eqnarray}
then the following maximum principle for the strip adjacent to the
$\theta$-axis can be obtained.

\begin{lemma}\label{LEMMA11}
Suppose that the coefficients of the operator $L$ given by
$(\ref{operator L})$ are bounded and have bounded first and second
derivatives. Let $D$ be an admissible domain. If $t_{0}$ and $M$ are
selected in accordance with $(\ref{cofficients})$ and $(\ref{M})$,
then any function $\omega$ which satisfies
\begin{equation*}
\left\{\begin{array}{ll}
(L+g)[\omega]\geqslant 0 ~~~~~in~~ D,\\[2mm]
\frac{\partial\omega}{\partial\nu}-M\omega\leqslant 0~~~~~on~~ \Gamma_{0},\\[2mm]
\omega\leqslant 0~~~~~~~~~~~~~~~~~~on~~ \Gamma_{0},&
 \end{array}\right.
\end{equation*}
also satisfies $\omega\leqslant 0$ in the part of $D$ which lies in
the strip $0\leqslant t\leqslant t_{0}$. The constants $t_{0}$ and
$M$ depend only on lower bounds for $-c$ and $\sqrt{b^{2}-ac}$ and
on bounds for the coefficients of $L$ and their derivatives.

\end{lemma}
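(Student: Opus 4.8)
The plan is to follow the energy/integral-identity method for hyperbolic operators used in Protter--Weinberger \cite{pw}; the point of the computations preceding the statement is that the auxiliary function $l(\theta,t)=1+\alpha t-\beta t^{2}$ has already been tuned so that every boundary and volume term below acquires a definite sign on a thin enough strip.

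First I would fix a point $P=(\theta_{*},t_{*})$ of $D$ with $0<t_{*}\leqslant t_{0}$ and form the characteristic triangle $\Delta$ whose apex is $P$, whose two lateral sides are the backward characteristics through $P$ with slopes $\frac{b\pm\sqrt{b^{2}-ac}}{c}$ (these are real and distinct because $b^{2}-ac\geqslant\mu>0$ on the strip by uniform hyperbolicity), and whose base lies on $\Gamma_{0}$. On $\Delta$ I would apply Green's second identity for the pair $(L,L^{*})$,
\begin{eqnarray*}
\int_{\Delta}\left(l\,L[\omega]-\omega\,L^{*}[l]\right)dA &=& \oint_{\partial\Delta}B(\omega,l)\,ds,
\end{eqnarray*}
where $B(\omega,l)$ is the bilinear concomitant carrying the first derivatives of $\omega$ and $l$. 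Adjoining the lower-order term $g$ converts this into an identity relating $\int_{\Delta}l\,(L+g)[\omega]$, $\int_{\Delta}\omega\,(L^{*}+g)[l]$, and the boundary flux.

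The sign bookkeeping then proceeds term by term. The volume integral $\int_{\Delta}l\,(L+g)[\omega]\,dA$ is nonnegative, since $l>0$ and $(L+g)[\omega]\geqslant0$ by hypothesis. The third inequality of (\ref{cofficients}) is precisely $(L^{*}+g)[l]\geqslant0$, so the sign of $\int_{\Delta}\omega\,(L^{*}+g)[l]$ is governed by that of $\omega$. The delicate step is the two lateral (characteristic) pieces of $\oint_{\partial\Delta}$: restricting the concomitant to a characteristic direction collapses it to a first-order transport expression, and $K_{+}$, $K_{-}$ are exactly the coefficients that this collapse produces. Rewritten through $l_{t}$ and $l_{\theta}$, the first two inequalities of (\ref{cofficients}) guarantee that these two characteristic fluxes are simultaneously of the favorable sign after integrating the transport relations with the integrating factor built from $l$. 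Finally the base integral along $\Gamma_{0}$ reduces to a combination of $\omega$ and its conormal derivative $\frac{\partial\omega}{\partial\nu}$; the hypotheses $\frac{\partial\omega}{\partial\nu}-M\omega\leqslant0$ and $\omega\leqslant0$ on $\Gamma_{0}$, together with the choice (\ref{M}) of $M\geqslant-(b_{\theta}+c_{t}-e+c\alpha)$, force this piece to have the correct sign as well.

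Assembling these four contributions shows $\omega(P)$ cannot be positive: if $\omega(P)>0$, one shrinks $\Delta$ toward $P$ and extracts a strict sign contradiction from the assembled identity. Letting $P$ range over the part of $D$ inside $0\leqslant t\leqslant t_{0}$ yields $\omega\leqslant0$ there, and tracing the construction shows $t_{0}$ and $M$ depend only on lower bounds for $-c$ and $\sqrt{b^{2}-ac}$ and on bounds for the coefficients of $L$ and their derivatives, as asserted. I expect the main obstacle to be exactly the characteristic fluxes of the third paragraph: the concomitant is indefinite in general, and only after expressing it via $K_{\pm}$ and invoking the carefully tuned $l$---which is why $\alpha,\beta$ must be taken large and the strip $t_{0}$ small---can both lateral contributions be shown nonpositive at once. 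This is the heart of the Protter--Weinberger argument and the reason the conclusion is confined to a thin strip.
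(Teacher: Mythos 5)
First, a point of comparison: the paper does not prove this lemma at all --- it is imported from Protter--Weinberger \cite{pw}, with the surrounding apparatus (the function $l$ in (\ref{DL}), the quantities $K_{\pm}$, and the conditions (\ref{cofficients}) and (\ref{M})) transcribed from \cite[pp.~502--503]{klw} --- so there is no in-paper argument to measure yours against. Your outline does capture the architecture of the Protter--Weinberger proof: the adjoint $L^{*}$, the characteristic triangle with base on $\Gamma_{0}$ (legitimate because $b^{2}-ac\geqslant 1>0$ in the applications here), the identification of the third inequality of (\ref{cofficients}) as $(L^{*}+g)[l]\geqslant 0$, the reading of $K_{\pm}$ as the coefficients produced when the bilinear concomitant is restricted to the two characteristic directions, and the role of (\ref{M}) in controlling the base integral through the conormal derivative. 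That is the correct skeleton.

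As a proof, however, it has two genuine gaps. The endgame is wrong as stated: shrinking the triangle $\Delta$ toward its apex $P$ sends every integral in your Green identity to zero, so no contradiction can be extracted that way. What actually produces $\omega(P)$ is the integration of the first-order transport relations along the two characteristic sides: the integrating factor built from $l$ and $\sqrt{b^{2}-ac}$ yields endpoint contributions at the apex carrying $\omega(P)$ with a strictly positive coefficient (this is precisely where the stated dependence of $t_{0}$, $M$ on lower bounds for $-c$ and $\sqrt{b^{2}-ac}$ enters), and the resulting inequality of the form (positive constant)$\,\cdot\,\omega(P)\leqslant$ (sum of nonpositive terms) is what closes the argument; you never exhibit this coefficient. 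Second, you note that the sign of $\int_{\Delta}\omega\,(L^{*}+g)[l]\,dA$ is ``governed by that of $\omega$,'' but $\omega\leqslant 0$ in $D$ is exactly what is being proved, so the argument as written is circular; one needs a continuity or first-crossing argument (or a restriction of the offending integral to the set where $\omega>0$) to break the loop, and none is supplied. Together with the fact that the sign verifications for the two characteristic fluxes --- the step where the first two inequalities of (\ref{cofficients}) are actually consumed --- are asserted rather than carried out, the proposal remains a plausible plan for reconstructing the cited proof rather than a proof.
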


\begin{proposition}\label{containment}(Containment principle)
Let $F_{1}$ and $F_{2}:\mathbb{S}^{1}\times [0,T)\rightarrow
\mathbb{R}^{2}$ be two convex solutions of (\ref{curve flow}).
Suppose that $F_{2}(u,0)$ lies in the domain enclosed by
$F_{1}(u,0)$, and $f_{2}(u)\leqslant f_{1}(u)$. Then $F_{2}(u,t)$ is
contained in the domain enclosed by $F_{1}(u,t)$ for all $t\in
[0,T)$.
\end{proposition}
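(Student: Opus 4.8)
The plan is to compare the two curves through their support functions and reduce the statement to a one‑sided bound on their difference, to which the hyperbolic maximum principle of Lemma \ref{LEMMA11} applies. First I would parameterize both flows by the common normal angle $\theta$: since $F_1,F_2$ are convex and, by Lemma \ref{ADD}, evolve normally, each admits a support function $S_i(\theta,\tau)=\langle\widetilde F_i,\vec\nu\rangle$, and the derivation leading to (\ref{s flow}) shows that each $S_i$ solves
\[
(S_i)_{\tau\tau}=\frac{(S_i)_{\theta\tau}^{2}}{(S_i)_{\theta\theta}+S_i}+\big((S_i)_{\theta\theta}+S_i\big),
\]
with $(S_i)_{\theta\theta}+S_i=1/k_i>0$ by strict convexity. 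The hypotheses translate cleanly (taking a common interior origin): $F_2(\cdot,0)$ lying inside $F_1(\cdot,0)$ is equivalent to $S_2(\cdot,0)\leqslant S_1(\cdot,0)$, while $f_2\leqslant f_1$ gives $\partial_\tau S_2(\cdot,0)\leqslant\partial_\tau S_1(\cdot,0)$, since the initial support‑function velocity equals the initial normal speed. Because the same equivalence between support‑function ordering and containment holds at every time, it suffices to prove $\omega:=S_2-S_1\leqslant0$ on $\mathbb{S}^{1}\times[0,T)$.

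Next I would produce a single linear equation for $\omega$. Writing the right‑hand side above as $Q(S_{\theta\theta},S_{\theta\tau},S)$ and subtracting the two equations, the fundamental theorem of calculus along the segment $S_1+s(S_2-S_1)$ yields $\omega_{\tau\tau}=a\,\omega_{\theta\theta}+2b\,\omega_{\theta\tau}+g\,\omega$, where $a=\int_0^1 Q_{S_{\theta\theta}}\,ds$, $2b=\int_0^1 Q_{S_{\theta\tau}}\,ds$ and $g=\int_0^1 Q_S\,ds$ are smooth and bounded on each slab $\mathbb{S}^{1}\times[0,T']$, $T'<T$ (strict convexity keeps the denominators $S_{\theta\theta}+S$ bounded away from zero). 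Casting this in the form of the operator (\ref{operator L}) with $c=-1$ and $d=e=0$, the hypothesis $(L+g)[\omega]\geqslant0$ of Lemma \ref{LEMMA11} holds as an equality.

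Then I would check the remaining hypotheses and feed in the data. Using $Q_{S_{\theta\theta}}=1-p^{2}$ and $\tfrac12 Q_{S_{\theta\tau}}=p$ with $p=S_{\theta\tau}/(S_{\theta\theta}+S)$, one finds
\[
b^{2}-ac=1-\left(\int_{0}^{1}p^{2}\,ds-\Big(\int_{0}^{1}p\,ds\Big)^{2}\right),
\]
so the operator is hyperbolic precisely when $p$ varies little along the homotopy, which holds on a short enough strip by the smoothness and convexity of both solutions. With $\omega(\cdot,0)\leqslant0$ from the initial containment and $\partial_\tau\omega(\cdot,0)\leqslant0$ from $f_2\leqslant f_1$, I choose $M$ as in (\ref{M}) so that the conormal condition $\partial_\nu\omega-M\omega\leqslant0$ holds on $\Gamma_0$. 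Lemma \ref{LEMMA11} then gives $\omega\leqslant0$ on a strip $0\leqslant\tau\leqslant\tau_0$, where $\tau_0$ and $M$ depend only on bounds for the coefficients; since those bounds are uniform on $[0,T']$, I iterate the estimate on consecutive strips to propagate $\omega\leqslant0$ across $[0,T']$ and let $T'\uparrow T$.

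The hard part will be the hyperbolicity and coefficient control: one must keep the difference operator uniformly hyperbolic (equivalently, keep the cross term $S_{\theta\tau}$, i.e. the variance of $p$, under control) and keep all coefficients together with their first and second derivatives bounded, so that the strip width $\tau_0$ and the constant $M$ of Lemma \ref{LEMMA11} can be chosen uniformly and the strip estimate iterated to fill $[0,T)$. The accompanying technical points are verifying the conormal‑derivative inequality with the correct sign and arranging that the Cauchy data used to restart on each successive strip again have the required signs.
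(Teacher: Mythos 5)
Your overall strategy is the same as the paper's: pass to support functions, reduce containment to the sign condition $\omega:=S_{2}-S_{1}\leqslant0$, derive a linear second--order equation for $\omega$, and invoke Lemma \ref{LEMMA11}. The genuine gap is in the linearization step. Integrating $Q$ along the segment $S_{1}+s(S_{2}-S_{1})$ gives, exactly as you compute,
\[
b^{2}-ac=1-\left(\int_{0}^{1}p^{2}\,ds-\Big(\int_{0}^{1}p\,ds\Big)^{2}\right),\qquad p=\frac{S_{\theta t}}{S_{\theta\theta}+S},
\]
and this variance term need not be smaller than $1$: along the homotopy $p$ sweeps (roughly) from $p_{1}=k_{1}S_{1\theta t}$ to $p_{2}=k_{2}S_{2\theta t}$, and the resulting variance is of order $(p_{2}-p_{1})^{2}$. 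At $t=0$ the quantities $p_{i}=k_{i}(\theta,0)\,\partial_{\theta}\widetilde{f}_{i}(\theta)$ are fixed by the data of the two flows and can differ by an arbitrarily large amount (take $f_{1}\equiv0$ and $f_{2}$ negative with a large--amplitude, high--frequency part); then your operator fails to be hyperbolic already at the initial time. Your justification ``which holds on a short enough strip by the smoothness and convexity of both solutions'' does not work, because shrinking the strip in time does nothing to bring the two solutions --- hence the two values of $p$ --- closer together.

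The paper sidesteps this by an \emph{exact algebraic} rearrangement of the difference of the two equations instead of a mean--value linearization. Using $S_{i}+S_{i\theta\theta}=1/k_{i}$ and the identity $k_{2}S_{2\theta t}^{2}-k_{1}S_{1\theta t}^{2}=(k_{1}S_{1\theta t}+k_{2}S_{2\theta t})\omega_{\theta t}+(k_{2}-k_{1})S_{1\theta t}S_{2\theta t}$, one obtains
\[
\omega_{tt}=\big(1-k_{1}k_{2}S_{1\theta t}S_{2\theta t}\big)(\omega_{\theta\theta}+\omega)+\big(k_{1}S_{1\theta t}+k_{2}S_{2\theta t}\big)\omega_{\theta t},
\]
so that $a=1-k_{1}k_{2}S_{1\theta t}S_{2\theta t}$, $2b=k_{1}S_{1\theta t}+k_{2}S_{2\theta t}$, $c=-1$, and
\[
b^{2}-ac=\tfrac{1}{4}\big(k_{1}S_{1\theta t}-k_{2}S_{2\theta t}\big)^{2}+1\geqslant1,
\]
giving uniform hyperbolicity unconditionally. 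If you replace your homotopy coefficients by these, the rest of your argument --- the translation of the hypotheses into signs of the Cauchy data, the choice of $M$, and the iteration of the strip estimate to cover $[0,T)$ (which you actually spell out more carefully than the paper does) --- goes through.
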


\begin{proof}
Let $S_{1}(\theta,t)$ and $S_{2}(\theta,t)$ be the support functions
of $F_{1}(u,t)$ and $F_{2}(u,t)$, respectively. Then
$S_{1}(\theta,t)$ and $S_{2}(\theta,t)$ satisfy the same equation
(\ref{s flow}) with $S_{2}(\theta,0)\leqslant S_{1}(\theta,0)$ and
$S_{2t}(\theta,0)\leqslant S_{1t}(\theta,0)$.

 Let
\begin{eqnarray*}
 \omega(\theta,t):=S_{2}(\theta,t)-S_{1}(\theta,t).
 \end{eqnarray*}
 Then we
have
\begin{eqnarray*}
\omega_{tt}&=&S_{2tt}-S_{1tt}=\frac{S_{2\theta
t}^{2}+k_{2}^{-2}}{S_{2}+S_{2\theta\theta}}
-\frac{S_{1\theta t}^{2}+k_{1}^{-2}}{S_{1}+S_{1\theta\theta}}\\
&=&k_{1}k_{2}\left(\frac{1}{k_{1}k_{2}}-S_{1\theta t}S_{2\theta
t}\right)\omega_{\theta\theta}+(k_{1}S_{1\theta t}+k_{2}S_{2\theta
t})\omega_{\theta t}
+k_{1}k_{2}\left(\frac{1}{k_{1}k_{2}}-S_{1\theta t}S_{2\theta
t}\right)\omega,
\end{eqnarray*}
which implies that $\omega$ satisfies the following system
\begin{equation}\label{W flow}
\left\{\begin{array}{ll}
\omega_{tt}=k_{1}k_{2}\left(\frac{1}{k_{1}k_{2}}-S_{1\theta
t}S_{2\theta t}\right)\omega_{\theta\theta}+(k_{1}S_{1\theta
t}+k_{2}S_{2\theta t})\omega_{\theta t}
+k_{1}k_{2}\left(\frac{1}{k_{1}k_{2}}-S_{1\theta t}S_{2\theta t}\right)\omega,\\[2mm]
\omega_{t}(\theta,0)=f_{2}(\theta)-f_{1}(\theta)=\omega_{1}(\theta),\\[2mm]
\omega(\theta,0)=h_{2}(\theta)-h_{1}(\theta)=\omega_{0}(\theta).&
 \end{array}\right.
\end{equation}
Define the operator $L$ by
 \begin{eqnarray*}
 L[\omega]:=k_{1}k_{2}\left(\frac{1}{k_{1}k_{2}}-S_{1\theta
t}S_{2\theta t}\right)\omega_{\theta\theta}+(k_{1}S_{1\theta
t}+k_{2}S_{2\theta t})\omega_{\theta t}-\omega_{tt},
 \end{eqnarray*}
  and then we know that
\begin{eqnarray*}
 a=k_{1}k_{2}\left(\frac{1}{k_{1}k_{2}}-S_{1\theta t}S_{2\theta
t}\right),~~~b=\frac{1}{2}(k_{1}S_{1\theta t}+k_{2}S_{2\theta
t}),~~~c=-1
 \end{eqnarray*}
are twice continuously differentiable functions of $\theta$ and $t$.
By a direct computation, we have
\begin{eqnarray*}
b^{2}-ac&=&\frac{1}{4}(k_{1}S_{1\theta t}+k_{2}S_{2\theta t})^{2}-k_{1}k_{2}\left(\frac{1}{k_{1}k_{2}}-S_{1\theta t}S_{2\theta t}\right)\cdot (-1)\\
&=&\frac{1}{4}(k_{1}S_{1\theta t}-k_{2}S_{2\theta t})^{2}+1>0.
\end{eqnarray*}
Hence, the operator $L$ is uniformly hyperbolic in
$\mathbb{S}^{1}\times [0,T)$. By Lemma \ref{LEMMA11}, we deduce that
$S_{2}(\theta,t)\leqslant S_{1}(\theta,t)$ for all $t\in [0,T)$,
which completes the proof.
\end{proof}

\begin{proposition}\label{convex}(Preserving convexity)
Let $k_{0}(\theta)$ be the curvature function of $F_{0}$ and
$$\delta=\min\limits_{\theta\in[0,2\pi]}\{k_{0}(\theta)\}>0.$$
Then for a $C^4$-solution $S$ of (\ref{s flow}), we have
 \begin{eqnarray*}
  k(\theta, t)\geqslant\delta,
  \end{eqnarray*}
for all $t\in [0,T_{\rm{max}})$, where $[0,T_{\rm{max}})$ is the
maximal time interval for solution $F(\cdot,t)$ of $(\ref{curve
flow})$.
\end{proposition}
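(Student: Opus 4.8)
The plan is to reduce Proposition \ref{convex} to a scalar second-order hyperbolic equation for the radius of curvature and then to invoke the strip maximum principle of Lemma \ref{LEMMA11}. Since the evolving curve is parametrized by its normal angle, the curvature is recovered from the support function by $k=(S_{\theta\theta}+S)^{-1}$, so $\Phi:=k^{-1}=S_{\theta\theta}+S$ is the radius of curvature and, by (\ref{s flow}), $S_{tt}=S_{\theta t}^{2}/\Phi+\Phi$. I would set
\begin{equation*}
\omega:=\Phi-\tfrac{1}{\delta}=S_{\theta\theta}+S-\tfrac{1}{\delta}.
\end{equation*}
Because $k(\cdot,0)=k_{0}\geqslant\delta$, we have $\Phi(\cdot,0)=1/k_{0}\leqslant 1/\delta$, hence $\omega(\cdot,0)\leqslant 0$; the assertion $k\geqslant\delta$ is exactly $\omega\leqslant 0$ on $\mathbb{S}^{1}\times[0,T_{\rm{max}})$.

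Second, I would derive a closed equation for $\Phi$ (hence for $\omega$, which differs only by a constant). Using $\Phi_{tt}=S_{tt}+(S_{tt})_{\theta\theta}$ and writing $v:=S_{\theta t}$, differentiating $S_{tt}=v^{2}/\Phi+\Phi$ twice in $\theta$ and collecting the genuinely second-order terms in $\Phi$ gives
\begin{equation*}
\Phi_{tt}=\Big(1-\frac{v^{2}}{\Phi^{2}}\Big)\Phi_{\theta\theta}+\frac{2v}{\Phi}\,\Phi_{\theta t}+\Psi,
\end{equation*}
where $\Psi$ collects terms of order at most one in the derivatives of $\Phi$ (produced by $(S_{tt})_{\theta\theta}$ together with $v_{\theta\theta}=\Phi_{\theta t}-v$). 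Comparing with the operator $L$ in (\ref{operator L}) yields $a=1-v^{2}/\Phi^{2}$, $b=v/\Phi$, $c=-1$, so that
\begin{equation*}
b^{2}-ac=\frac{v^{2}}{\Phi^{2}}+\Big(1-\frac{v^{2}}{\Phi^{2}}\Big)=1>0,
\end{equation*}
mirroring the computation in the proof of Proposition \ref{containment}. Thus the equation is uniformly hyperbolic with $\sqrt{b^{2}-ac}=1$ and $-c=1$ bounded below, exactly the setting of Lemma \ref{LEMMA11}.

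Third, I would apply Lemma \ref{LEMMA11} to $\omega$. The two gradient inequalities in (\ref{cofficients}) hold for $\alpha,\beta$ large because the coefficients $a,b,c$ and their derivatives stay bounded as long as the $C^{4}$-solution persists with $k$ bounded below on a short strip; the constant $M$ is then fixed by (\ref{M}). With $\omega(\cdot,0)\leqslant 0$ and the conormal inequality $\partial_{\nu}\omega-M\omega\leqslant 0$ on $\Gamma_{0}$, Lemma \ref{LEMMA11} gives $\omega\leqslant 0$, i.e. $k\geqslant\delta$, on an initial strip $0\leqslant t\leqslant t_{0}$. Since then $\min_{\theta}k(\cdot,t_{0})\geqslant\delta$, the argument restarts on $[t_{0},2t_{0}]$ with the same coefficient bounds, and finitely many such steps cover any $[0,T']$ with $T'<T_{\rm{max}}$, propagating the bound up to $T_{\rm{max}}$.

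The hyperbolicity is automatic, so the main obstacle is the zeroth-order bookkeeping: after substituting $\Phi=\omega+1/\delta$ one must show that the inhomogeneous lower-order term $\Psi$ and the induced zeroth-order coefficient $g$ have the sign needed for $(L+g)[\omega]\geqslant 0$, and simultaneously that the initial velocity $\widetilde f$ (through $\omega_{t}(\cdot,0)=\widetilde f+\widetilde f_{\theta\theta}$) satisfies the conormal inequality $\partial_{\nu}\omega-M\omega\leqslant 0$ on $\Gamma_{0}$. Verifying these sign conditions uniformly, so that the strip width $t_{0}$ does not shrink to zero before $T_{\rm{max}}$, is the delicate heart of the proof; this is precisely where the lower bounds on $-c$ and $\sqrt{b^{2}-ac}$ quoted at the end of Lemma \ref{LEMMA11} enter.
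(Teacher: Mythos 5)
Your overall strategy is the one the paper uses: derive a second--order hyperbolic equation for a curvature quantity, observe that the principal symbol gives $b^{2}-ac=1$, and invoke the strip maximum principle of Lemma \ref{LEMMA11}, iterating over strips. Working with the radius of curvature $\Phi=S_{\theta\theta}+S=k^{-1}$ instead of $k$ is a genuine (and pleasant) simplification of the bookkeeping, since $\Phi_{tt}=S_{tt}+(S_{tt})_{\theta\theta}$ avoids the paper's long chain of substitutions in computing $k_{tt}$; and indeed your coefficients $a=1-v^{2}/\Phi^{2}$, $b=v/\Phi$, $c=-1$ are exactly the paper's $a=k^{2}(k^{-2}-S_{\theta t}^{2})$, $b=kS_{\theta t}$, $c=-1$ rewritten in the variable $\Phi=1/k$.

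However, the proposal has a genuine gap, and you name it yourself without closing it: you never verify the hypotheses $(L+g)[\omega]\geqslant0$ in $D$ and $\frac{\partial\omega}{\partial\nu}-M\omega\leqslant0$ on $\Gamma_{0}$ that Lemma \ref{LEMMA11} actually requires. This is not a routine check one can defer. First, the lower--order remainder $\Psi$ contains $v_{\theta}=\Phi_{t}-S_{t}$ and $v$ itself, so after substituting $\Phi=\omega+1/\delta$ one must exhibit explicit first-- and zeroth--order coefficients for $\omega$ and show that the resulting inhomogeneous term (the residual of the constant $1/\delta$ in the $\Phi$--equation) has the correct sign; the paper does the analogous work by writing out the full equation for $k$ and producing the comparison function $\widetilde{k}\equiv\delta$ with $(L+\widetilde{h})[\widetilde{k}]=0$. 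Second, the conormal condition at $t=0$ reduces, at a point where $k_{0}$ attains its minimum $\delta$ (so $\omega=0$ and $\omega_{\theta}=0$ there), to $\omega_{t}(\cdot,0)=\widetilde{f}+\widetilde{f}_{\theta\theta}\leqslant0$, and nothing in the hypotheses of Proposition \ref{convex} controls the sign of $\widetilde{f}+\widetilde{f}_{\theta\theta}$; the paper's proof handles this by imposing $0\leqslant\frac{\partial k}{\partial\nu}$ on $\Gamma_{0}$ as part of the system it sets up. Until these two sign conditions are actually established (or appropriate hypotheses identified), Lemma \ref{LEMMA11} cannot be invoked and the inequality $k\geqslant\delta$ does not follow from what you have written. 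The final continuation step also needs the strip width $t_{0}$ to be bounded below on compact subintervals of $[0,T_{\rm{max}})$, which requires uniform bounds on $S_{\theta t}$ and $\Phi$ there; this should at least be stated as the input it is.
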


\begin{proof}
Since the initial curve is strictly convex, by Theorem \ref{main-2}
we know that the solution of (\ref{s flow}) remains strictly convex
on some short time interval $[0,T)$ with some $T\leqslant
T_{\rm{max}}$ and its support function satisfies
 \begin{eqnarray*}
 S_{tt}=kS_{\theta t}^{2}+k^{-1}
 \end{eqnarray*}
for all $(\theta,t)\in \mathbb{S}^{1}\times[0,T)$. Taking derivative
with respect to $t$, we have
 \begin{eqnarray*}
k_{t}=\left(\frac{1}{S+S_{\theta\theta}}\right)_{t}
=-\frac{1}{(S+S_{\theta\theta})^{2}}(S_{t}+S_{\theta\theta t})
=-k^{2}(S_{t}+S_{\theta\theta t}).
 \end{eqnarray*}
Together with the fact $S_{t}=\widetilde{\sigma}$, it is easy to
know
$k_{t}=-k^{2}(\widetilde{\sigma}+\widetilde{\sigma}_{\theta\theta})$.
Therefore, we can obtain the followings
$$S_{t}+S_{\theta\theta t}=-(S+S_{\theta\theta})^{2}k_{t}=-\frac{1}{k^{2}}k_{t},$$
$$S_{\theta t}+S_{\theta\theta\theta t}=\left(-\frac{1}{k^{2}}k_{t}\right)_{\theta}=\frac{2}{k^{3}}k_{t}k_{\theta}-\frac{1}{k^{2}}k_{\theta t},$$
and
\begin{eqnarray*}
k_{tt}&=&\left(-\frac{1}{(S+S_{\theta\theta})^{2}}(S_{t}+S_{\theta\theta t})\right)_{t}\\
&=&\frac{2}{(S+S_{\theta\theta})^{3}}(S_{t}+S_{\theta\theta t})^{2}-\frac{1}{(S+S_{\theta\theta})^{2}}(S_{tt}+S_{\theta\theta tt})\\
&=&2k^{3}(-\frac{1}{k^{2}}k_{t})^{2}-k^{2}[(S_{tt})_{\theta\theta}+S_{tt}]\\
&=&\frac{2}{k}k_{t}^{2}-k^{2}[(kS_{\theta t}^{2}-k+k+k^{-1})_{\theta\theta}+(kS_{\theta t}^{2}-k+k+k^{-1})]\\
&=&\frac{2}{k}k_{t}^{2}-k^{2}[((S_{\theta t}^{2}-1)k)_{\theta\theta}+(S_{\theta t}^{2}-1)k+(k+k^{-1})_{\theta\theta}+(k+k^{-1})]\\
&=&\frac{2}{k}k_{t}^{2}-k^{2}[((S_{\theta
t}^{2}-1)_{\theta}k+(S_{\theta t}^{2}-1)k_{\theta})_{\theta}
+(S_{\theta t}^{2}-1)k+(k+k^{-1})_{\theta\theta}+(k+k^{-1})]\\
&=&\frac{2}{k}k_{t}^{2}-k^{2}[(S_{\theta
t}^{2}-1)_{\theta\theta}k+2(S_{\theta
t}^{2}-1)_{\theta}k_{\theta}+(S_{\theta t}^{2}-1)k_{\theta\theta}
+(S_{\theta t}^{2}-1)k]\\
&&-k^{2}[(k+k^{-1})_{\theta\theta}+(k+k^{-1})]\\
&=&\frac{2}{k}k_{t}^{2}-k^{2}(S_{\theta
t}^{2}-1)(k+k_{\theta\theta})
-k^{2}[(2S_{\theta t}S_{\theta\theta t})_{\theta}k+4S_{\theta t}S_{\theta\theta t}k_{\theta}]\\
&&-k^{2}[k_{\theta\theta}-\frac{1}{k^{2}}k_{\theta\theta}+\frac{2}{k^{3}}k_{\theta}^{2}+(k+k^{-1})]\\
&=&\frac{2}{k}k_{t}^{2}-k^{2}(S_{\theta
t}^{2}-1)(k+k_{\theta\theta})
-k^{2}[2(S_{\theta\theta t}^{2}+S_{\theta t}S_{\theta\theta\theta t})k+4k_{\theta}S_{\theta t}(S_{\theta\theta}+S-S)_{t}]\\
&&-k^{2}[(1-\frac{1}{k^{2}})k_{\theta\theta}+\frac{2}{k^{3}}k_{\theta}^{2}+(k+k^{-1})]\\
&=&\frac{2}{k}k_{t}^{2}-k^{2}(S_{\theta
t}^{2}-1)(k+k_{\theta\theta})
-k^{2}[2((S_{\theta\theta t}+S_{t})^{2}-2S_{\theta\theta t}S_{t}-S_{t}^{2}\\
&&+S_{\theta t}(S_{\theta\theta}+S)_{\theta t}-S_{\theta t}^{2})k
+4k_{\theta}S_{\theta t}(\frac{1}{k}-S)_{t}]\\
&&-k^{2}[(1-\frac{1}{k^{2}})k_{\theta\theta}+\frac{2}{k^{3}}k_{\theta}^{2}+(k+k^{-1})]\\
&=&\frac{2}{k}k_{t}^{2}-k^{2}(S_{\theta
t}^{2}-1)(k+k_{\theta\theta})
-k^{2}[2((S_{\theta\theta t}+S_{t})^{2}-2(S_{\theta\theta t}+S_{t})S_{t}+S_{t}^{2}\\
&&+S_{\theta t}(\frac{1}{k})_{\theta t}-S_{\theta t}^{2})k
-4k_{\theta}S_{\theta t}\frac{1}{k^{2}}k_{t}-4k_{\theta}S_{\theta t}S_{t}]\\
&&-k^{2}[(1-\frac{1}{k^{2}})k_{\theta\theta}+\frac{2}{k^{3}}k_{\theta}^{2}+(k+k^{-1})]\\
&=&\frac{2}{k}k_{t}^{2}-k^{2}(S_{\theta
t}^{2}-1)(k+k_{\theta\theta})
-2k^{3}[(-\frac{1}{k^{2}}k_{t})^{2}-2(-\frac{1}{k^{2}}k_{t})S_{t}+S_{t}^{2}-S_{\theta t}^{2}\\
&&+S_{\theta
t}(\frac{2}{k^{3}}k_{t}k_{\theta}-\frac{1}{k^{2}}k_{\theta t})]
+4k^{2}(k_{\theta}S_{\theta t}\frac{1}{k^{2}}k_{t}+k_{\theta}S_{\theta t}S_{t})\\
&&-k^{2}[(1-\frac{1}{k^{2}})k_{\theta\theta}+\frac{2}{k^{3}}k_{\theta}^{2}+(k+k^{-1})]\\
&=&k^{2}(1-S_{\theta
t}^{2})(k+k_{\theta\theta})-4kS_{t}k_{t}-2k^{3}S_{t}^{2}+2k^{3}S_{\theta
t}^{2}
+2kS_{\theta t}k_{\theta t}+4k^{2}S_{\theta t}S_{t}k_{\theta}\\
&&-k^{2}[(1-\frac{1}{k^{2}})k_{\theta\theta}+\frac{2}{k^{3}}k_{\theta}^{2}+(k+k^{-1})]\\
&=&k^{2}(\frac{1}{k^{2}}-S_{\theta
t}^{2})k_{\theta\theta}+2kS_{\theta t}k_{\theta t}+4k^{2}S_{\theta
t}S_{t}k_{\theta}-\frac{2}{k}k_{\theta}^{2}-4kS_{t}k_{t}+k^{3}(S_{\theta
t}^{2}-2S_{t}^{2}-k^{-2}).
\end{eqnarray*}
So, the curvature $k$ satisfies the equation
\begin{eqnarray*}
k_{tt}=k^{2}\left(\frac{1}{k^{2}}-S_{\theta
t}^{2}\right)k_{\theta\theta}+2kS_{\theta t}k_{\theta
t}+4k^{2}S_{\theta
t}S_{t}k_{\theta}-\frac{2}{k^{3}}k_{\theta}^{2}-4kS_{t}k_{t}+k^{3}(S_{\theta
t}^{2}-2S_{t}^{2}-k^{-2}).
\end{eqnarray*}
Define the operator $L$ as
 \begin{eqnarray*}
 L[k]:=k^{2}\left(\frac{1}{k^{2}}-S_{\theta
t}^{2}\right)k_{\theta\theta}+2kS_{\theta t}k_{\theta t}-k_{tt}
+4k^{2}S_{\theta
t}S_{t}k_{\theta}-\frac{2}{k^{3}}k_{\theta}^{2}-4kS_{t}k_{t}.
 \end{eqnarray*}
 We
know that
$$a=k^{2}\left(\frac{1}{k^{2}}-S_{\theta t}^{2}\right),~~~b=kS_{\theta t},~~~c=-1$$
are twice continuously differentiable functions of $\theta$ and $t$.
So we have
$$b^{2}-ac=(kS_{\theta t})^{2}-k^{2}\left(\frac{1}{k^{2}}-S_{\theta t}^{2}\right)\cdot (-1)=1>0,$$
which implies that the operator $L$ is hyperbolic in
$\mathbb{S}^{1}\times [0,T)$.

Determining a function $k(\theta,t)$ which satisfies the following
system
\begin{equation*}
\left\{\begin{array}{ll}
(L+\widetilde{h})[k]=0~~~~~~~~~~~\qquad\qquad \qquad \rm{in}~~ \mathbb{S}^{1}\times [0,T),\\[2mm]
k(\theta,0)=k_{0}(\theta)~~~~~~~~\qquad\qquad \qquad \rm{on}~~ \Gamma_{0},\\[2mm]
0\leqslant \frac{\partial
k}{\partial\nu}:=-bk_{\theta}-ck_{t}:=\beta(\theta)~~~~~~~~\rm{on}~~
\Gamma_{0},&
 \end{array}\right.
\end{equation*}
where the operator $\widetilde{h}$ is defined as
$\widetilde{h}[k]:=k^{3}(S_{\theta t}^{2}-2S_{t}^{2}-k^{-2})$. It is
easy to check that the function $\widetilde{k}(\theta,t)=\min
\limits_{\theta\in[0,2\pi]}\{k_{0}(\theta)\}=\delta$ satisfies
\begin{equation*}
\left\{\begin{array}{ll}
(L+\widetilde{h})[\widetilde{k}]=0~~~~~~~~~~~~~~~~~~~~~~~~~~~~~~~\rm{in}~~ \mathbb{S}^{1}\times [0,T),\\[2mm]
\widetilde{k}(\theta,0)\leqslant k_{0}(\theta)~~~~~~~~~~~~~~~~~~~~~~~~~~~~\rm{on}~~ \Gamma_{0},\\[2mm]
\frac{\partial
\widetilde{k}}{\partial\vec{\nu}}-M\widetilde{k}\leqslant\beta(\theta)-Mk_{0}(\theta)~~~~~~~~~\rm{on}~~
\Gamma_{0},&
 \end{array}\right.
\end{equation*}
where $\Gamma_{0}$ is the initial domain, and $M$ is the constant
determined by (\ref{M}). Applying Lemma \ref{LEMMA11} to
$\widetilde{k}-k$ yields
$$\widetilde{k}\leqslant k(\theta,t)~~~~~~{\rm{in}}~~ \mathbb{S}^{1}\times [0,t_{0}).$$
with $t_{0}\leqslant T$. Hence, we know that the solution
$F(\cdot,t)$ remains convex on $[0,T_{\rm{max}})$ and its curvature
function $k(\theta,t)$ has a uniformly positive lower bound
$\delta=\min \limits_{\mathbb{S}^1}\{k_{0}(\theta)\}$ on
$\mathbb{S}^{1}\times [0,T_{\max})$, which completes the proof.
\end{proof}

We need the following evolution equations of the arc-length of
evolving curves.
\begin{lemma}\label{length}
The arc-length $\mathcal{L}(t)$ of the closed curve $F(u,t)$
satisfies
 \begin{eqnarray*}
 \frac{d\mathcal{L}(t)}{dt}=\int_{0}^{2\pi}\widetilde{\sigma}(\theta,t)d\theta,
 \end{eqnarray*}
and
 \begin{eqnarray*}
\frac{d^{2}\mathcal{L}(t)}{dt^{2}}=\int_{0}^{2\pi}\left[k\left(\frac{\partial\widetilde{\sigma}}{\partial\theta}\right)^{2}+k^{-1}\right]d\theta.
 \end{eqnarray*}
\end{lemma}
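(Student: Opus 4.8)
The plan is to work entirely within the normal-angle parameterization set up above, in which $\vec{\nu}$ and $\vec{T}$ are independent of $\tau$ (so the $\theta$-range $[0,2\pi]$ is fixed in time) and the support function satisfies $S_{\tau}=\widetilde{\sigma}$, $S_{\theta\theta}+S=k^{-1}$, and the reduced equation $S_{\tau\tau}=kS_{\theta\tau}^{2}+k^{-1}$. Throughout I identify $\tau$ with $t$, as agreed before Example \ref{radius}. The first identity comes from differentiating the length functional once and applying Lemma \ref{evolution of v}; the second comes from differentiating the resulting $\theta$-integral a second time and inserting the reduced equation.

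First I would write $\mathcal{L}(t)=\int_{\mathbb{S}^{1}}\upsilon\,du$ and differentiate. Since the $u$-circle is fixed in time, I may differentiate under the integral, and Lemma \ref{evolution of v} gives $\frac{d\mathcal{L}}{dt}=\int_{0}^{2\pi}\upsilon_{t}\,du=\int_{0}^{2\pi}k\sigma\upsilon\,du$. Using $\frac{\partial\theta}{\partial s}=k$ together with $\frac{\partial s}{\partial u}=\upsilon$, one has $\frac{\partial\theta}{\partial u}=k\upsilon$, i.e. $d\theta=k\upsilon\,du$ at fixed $t$. Changing variables—legitimate because strict convexity, preserved by Proposition \ref{convex}, makes $\theta$ a monotone reparameterization of $u$ sweeping $[0,2\pi]$ exactly once—converts this into $\frac{d\mathcal{L}}{dt}=\int_{0}^{2\pi}\sigma\,d\theta=\int_{0}^{2\pi}\widetilde{\sigma}(\theta,t)\,d\theta$, which is the first claim.

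For the second identity the decisive point is that the limits $0$ and $2\pi$ of the $\theta$-integral no longer depend on $t$, so I can differentiate under the integral sign directly in the $\theta$-variable to get $\frac{d^{2}\mathcal{L}}{dt^{2}}=\int_{0}^{2\pi}\frac{\partial\widetilde{\sigma}}{\partial t}\,d\theta$. Because $\widetilde{\sigma}=S_{\tau}$, this equals $\int_{0}^{2\pi}S_{\tau\tau}\,d\theta$. Substituting the reduced equation $S_{\tau\tau}=kS_{\theta\tau}^{2}+k^{-1}$ and using $S_{\theta\tau}=S_{\tau\theta}=\partial_{\theta}\widetilde{\sigma}$ (equality of mixed partials for the smooth solution) yields exactly $\int_{0}^{2\pi}\bigl[k(\partial\widetilde{\sigma}/\partial\theta)^{2}+k^{-1}\bigr]\,d\theta$.

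The computations here are short; the only real care needed is the bookkeeping between the two parameterizations, namely recasting the first derivative as a $\theta$-integral precisely so that the second differentiation acts over fixed limits. The genuine subtleties—justifying the substitution $d\theta=k\upsilon\,du$ and the interchange of $d/dt$ with the $\theta$-integral—both reduce to the strict convexity and smoothness of the evolving curve guaranteed by Theorem \ref{main-2} and Proposition \ref{convex}, so I do not expect any serious obstacle.
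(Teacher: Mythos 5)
Your proposal is correct and follows essentially the same route as the paper: differentiate the length functional using Lemma \ref{evolution of v}, pass to the normal-angle variable, and then insert the reduced equation $S_{\tau\tau}=kS_{\theta\tau}^{2}+k^{-1}$ for the second derivative. In fact you are slightly more careful than the paper, which writes all the integrals against $d\theta$ and silently absorbs the factor $k\upsilon$, whereas you make the change of variables $d\theta=k\upsilon\,du$ and its justification via strict convexity explicit.
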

\begin{proof}
Since
 \begin{eqnarray*}
 \mathcal{L}(t)=\int_{0}^{2\pi}\upsilon(\theta,t)d\theta,
  \end{eqnarray*}
  and $\frac{\partial\upsilon}{\partial t}=k\upsilon\widetilde{\sigma}$,
by a direct calculation, we have
 \begin{eqnarray*}
\frac{d\mathcal{L}(t)}{dt}=\int_{0}^{2\pi}\frac{\partial\upsilon}{\partial
t}d\theta =\int_{0}^{2\pi}k\upsilon\widetilde{\sigma}d\theta=
\int_{0}^{2\pi}\widetilde{\sigma}(\theta,t)d\theta,
 \end{eqnarray*}
 and
\begin{eqnarray*}
\frac{d^{2}\mathcal{L}(t)}{dt^{2}}&=&\int_{0}^{2\pi}\frac{\partial}{\partial
t}\widetilde{\sigma}(\theta,t)d\theta=\int_{0}^{2\pi}S_{tt}d\theta\\
&=&\int_{0}^{2\pi}\left(kS_{\theta t}^{2}+k^{-1}\right)d\theta
=\int_{0}^{2\pi}\left[k\left(\frac{\partial}{\partial\theta}S_{t}\right)^{2}+k^{-1}\right]d\theta\\
&=&\int_{0}^{2\pi}\left[k\left(\frac{\partial\widetilde{\sigma}}{\partial\theta}\right)^{2}+k^{-1}\right]d\theta,
\end{eqnarray*}
 which completes the proof of Lemma \ref{length}.
\end{proof}

From Example \ref{radius}, we know that the behavior of evolving
plane curves of HIMCF (\ref{curve flow}) is complicated. However,
using Propositions \ref{containment} and \ref{convex}, Lemma
\ref{length}, we can get the following conclusion about the
asymptotic behavior of the hyperbolic flow (\ref{curve flow}).

\begin{theorem} \label{main-3}

Suppose that $F_{0}$ is a smooth strictly convex closed plane curve
with the curvature function $k_{0}(\theta)$ whose minimum and
maximum are given by
$\delta=\min\limits_{\mathbb{S}^1}\{k_{0}(\theta)\}>0$ and
$\zeta:=\max\limits_{\mathbb{S}^1}\{k_{0}(\theta)\}$ respectively.
 Then there exists a
family of strictly convex closed plane curves $F(\cdot,t)$
satisfying the IVP $(\ref{curve flow})$ on the time interval
$[0,T_{\rm{max}})$ with $0<T_{\rm{max}}\leqslant\infty$.
Moreover, we have \\
$(I)$ if $\zeta^{-1}+\min\limits_{u\in\mathbb{S}^{1}}f(u)>0$, then
$T_{\rm{max}}=\infty$, i.e., the flow exists for
all the time;\\
 $(II)$ if $\delta^{-1}+\max\limits_{u\in\mathbb{S}^{1}}f(u)<0$, then $T_{\rm{max}}<\infty$.
 Moreover, if furthermore $\delta^{-1}T_{\rm{max}}+\max\limits_{u\in\mathbb{S}^{1}}f(u)<0$, then as $t\rightarrow T_{\rm{max}}$, one of the following must be true:
\begin{itemize}
\item  the solution $F(\cdot,t)$ converges to a point as
$t\rightarrow T_{\rm{max}}$, i.e., the curvature of the limit curve
becomes unbounded as $t\rightarrow T_{\rm{max}}$;

 \item the curvature $k$ of the evolving
curve is discontinuous as $t\rightarrow T_{\rm{max}}$, so the
solution $F(\cdot,t)$ converges to a piecewise smooth curve.
\end{itemize}
\end{theorem}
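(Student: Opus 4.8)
The plan is to compare the flow against the exact round-circle solutions of Example \ref{radius}, using the containment principle (Proposition \ref{containment}) and the preserved curvature bound (Proposition \ref{convex}), and then to read off the limiting behaviour from the length evolution (Lemma \ref{length}). First I would fix the framework. By Theorem \ref{main-2} there is a maximal interval $[0,T_{\rm max})$ carrying a strictly convex solution $F(\cdot,t)$, and by Proposition \ref{convex} convexity persists with $k(\theta,t)\geq\delta$, so the radius of curvature satisfies $0<k^{-1}\leq\delta^{-1}$. Since $\partial\theta/\partial s=k$ one has $ds=k^{-1}\,d\theta$, hence the total curvature is the constant $\int k\,ds=2\pi$ and the length is $\mathcal{L}(t)=\int_{0}^{2\pi}k^{-1}\,d\theta$. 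For the quasilinear hyperbolic equation $(\ref{s flow})$ the standard continuation criterion is that the solution extends past a finite time unless a suitable norm degenerates; with the lower bound $k\geq\delta$ already secured, the only possible degeneration is $\max_\theta k\to\infty$, so that $T_{\rm max}<\infty$ forces the curvature to become unbounded as $t\to T_{\rm max}$.

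For part $(I)$ I would install an \emph{inner} comparison circle and then rule out blow-up. Because $k_{0}\leq\zeta$, the radius of curvature of $F_{0}$ is everywhere $\geq\zeta^{-1}$, so Blaschke's rolling theorem places a disk of radius $\zeta^{-1}$ inside the region bounded by $F_{0}$. Evolving that disk under $(\ref{curve flow})$, its radius $\rho(t)$ solves $\rho''=\rho$ by Example \ref{radius}; taking its constant initial normal speed to be $\min_{\mathbb{S}^{1}}f\,(\leq f)$ gives $\rho(0)+\rho'(0)=\zeta^{-1}+\min f>0$, so $\rho(t)\to\infty$ and the circle exists for all time. Proposition \ref{containment} then keeps this ever-expanding disk inside $F(\cdot,t)$, so the flow can never collapse. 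To upgrade ``no collapse'' to $T_{\rm max}=\infty$ I would bound $k$ from above on each finite interval by feeding an expanding circle solution as a barrier into the maximum principle (Lemma \ref{LEMMA11}) applied to the curvature equation of Proposition \ref{convex}; together with $k\geq\delta$ this supplies the continuation criterion and yields global existence.

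For part $(II)$ the comparison is reversed. Since $k_{0}\geq\delta$, the radius of curvature is $\leq\delta^{-1}$, so by Blaschke's rolling theorem $F_{0}$ lies inside a disk of radius $\delta^{-1}$. Evolving that circle gives $R(t)$ with $R''=R$, $R(0)=\delta^{-1}$, $R'(0)=\max f$, whence $R(0)+R'(0)=\delta^{-1}+\max f<0$, and by Example \ref{radius} it contracts to a point at a finite time $T^{\ast}$. Choosing its speed $\geq f$ and applying Proposition \ref{containment} traps $F(\cdot,t)$ inside this shrinking disk, so $T_{\rm max}\leq T^{\ast}<\infty$. Under the extra hypothesis, the normal speed obeys $\sigma(u,t)=f(u)+\int_{0}^{t}k^{-1}\,d\xi\leq\max f+\delta^{-1}t\leq\max f+\delta^{-1}T_{\rm max}<0$ for every $t<T_{\rm max}$, so every point moves strictly inward and the convex curves shrink monotonically to a well-defined convex limit. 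The dichotomy then follows from Lemma \ref{length}: since $\mathcal{L}(t)=\int_{0}^{2\pi}k^{-1}\,d\theta$ while $\int k\,ds\equiv2\pi$, either $\mathcal{L}(t)\to0$, forcing $k\to\infty$ and collapse to a point, or $\mathcal{L}(t)\to\ell>0$, in which case the curvature concentrates at finitely many points and the limit is a piecewise smooth curve.

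The hard part will be the curvature upper bound in part $(I)$: the containment principle controls only the \emph{size} of the curve, so excluding a finite-time corner genuinely requires the evolution equation for $k$ and the maximum principle, and the delicate point is matching the barrier's initial velocity to the clean threshold $\zeta^{-1}+\min f$ rather than to a quantity involving $f_{\theta\theta}$ (which is what $k_{t}(\cdot,0)$ naturally produces). A secondary difficulty is the limiting regularity in part $(II)$, namely verifying that the monotone convex limit is exactly a point or a piecewise smooth curve, which I would handle by combining the monotone inward motion, convexity, and the conserved total curvature $2\pi$ to conclude that curvature can concentrate at only finitely many points.
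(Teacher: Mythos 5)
Your proposal is correct and follows essentially the same route as the paper: an inner comparison circle of radius $\zeta^{-1}$ with speed $\min f$ for Case $(I)$ and an outer one of radius $\delta^{-1}$ with speed $\max f$ for Case $(II)$, fed through Proposition \ref{containment} and Proposition \ref{convex}, with the final dichotomy read off from the sign of $\frac{d\mathcal{L}}{dt}$ and $\frac{d^{2}\mathcal{L}}{dt^{2}}$ in Lemma \ref{length}. The only substantive deviations are that you explicitly flag (and sketch how to close) the step from ``the curve encloses arbitrarily large disks'' to $T_{\rm max}=\infty$ in Case $(I)$ --- a point the paper passes over --- and that you obtain the limit in Case $(II)$ by monotone shrinking rather than via the Blaschke Selection Theorem and the paper's case split on whether $\mathcal{L}$ vanishes before or after $T_{\rm max}$.
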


\begin{remark}
\rm{ In Case $(II)$ of Theorem \ref{main-3} above, the condition
$\delta^{-1}T_{\rm{max}}+\max\limits_{u\in\mathbb{S}^{1}}f(u)<0$ is
not easy to check, since for a general strictly convex closed plane
curve evolving under the hyperbolic flow $(\ref{curve flow})$, it is
difficult to get the accurate value of the maximal time
$T_{\rm{max}}$. However, as shown in the proof below, by Example
\ref{radius} and Proposition \ref{containment} (Containment
principle), we have $T_{\rm{max}}\leqslant
T^{\ast}=\frac{1}{2}\ln\left(\frac{-1+\delta\max\limits_{u\in\mathbb{S}^{1}}f(u)}{1+\delta\max\limits_{u\in\mathbb{S}^{1}}f(u)}\right)$.
So, for the purpose of easily checking, one can use a weaker
condition
$\delta^{-1}T^{\ast}+\max\limits_{u\in\mathbb{S}^{1}}f(u)<0$ to
replace the assumption
$\delta^{-1}T_{\rm{max}}+\max\limits_{u\in\mathbb{S}^{1}}f(u)<0$.
However, here we prefer to use the latter one, since it is sharper
than the previous one. }
\end{remark}

\begin{proof}
Let $[0,T_{\rm{max}})$ be the maximal time interval of the IVP
$(\ref{curve flow})$ with $F_{0}$ and $f$ as the initial curve and
initial velocity of the initial curve, respectively.

By Proposition \ref{convex}, we know that the solution $F(\cdot,t)$
remains strictly convex on $[0,T_{\rm{max}})$ and the curvature of
$F(\cdot,t)$ has a uniformly positive lower bound $\delta>0$ on
$\mathbb{S}^{1}\times [0,T_{\rm{max}})$.

Case $(I)$: When
$\zeta^{-1}+\min\limits_{u\in\mathbb{S}^{1}}f(u)>0$.

Since
$\zeta=\max\limits_{\mathbb{S}^1}\{k_{0}(\theta)\}\geqslant\delta>0$,
the initial curve $F_{0}$ can enclose a circle $\mathcal {C}_0$ with
radius $\zeta^{-1}$. Let the normal initial velocity of $\mathcal
{C}_0$ be equal to $\min\limits_{u\in\mathbb{S}^{1}}f(u)$. Evolving
$\mathcal {C}_0$ by the hyperbolic flow $(\ref{curve flow})$ to get
a solution $\mathcal{C}(\cdot,t)$. By Example \ref{radius}, we know
that if $\zeta^{-1}+\min\limits_{u\in\mathbb{S}^{1}}f(u)>0$, the
evolving circle $\mathcal{C}(\cdot,t)$ exists for all the time, and
its radius tends to infinity as $t\rightarrow\infty$. By Proposition
\ref{containment}, we can get that $\mathcal{C}(\cdot,t)$ always
lies in the domain $\mathcal{D}$ enclosed by the closed curve
$F(\cdot,t)$ for all $t\geqslant0$, and moreover, $\mathcal{D}$
tends to be the whole plane as $t\rightarrow\infty$. So, in this
case, the IVP $(\ref{curve flow})$ has the long-time existence,
i.e., $T_{\rm{max}}=\infty$.

Case $(II)$: When
$\delta^{-1}+\max\limits_{u\in\mathbb{S}^{1}}f(u)<0$.

Since $\delta=\min\limits_{\mathbb{S}^1}\{k_{0}(\theta)\}>0$, the
initial curve $F_{0}$ can be enclosed by a circle $\mathcal {C}_1$
with radius $\delta^{-1}$. Let the normal initial velocity of
$\mathcal {C}_1$ be equal to $\max\limits_{u\in\mathbb{S}^{1}}f(u)$.
Evolving $\mathcal {C}_1$ by the hyperbolic flow $(\ref{curve
flow})$ to get a solution $\widetilde{\mathcal{C}}(\cdot,t)$. By
Example \ref{radius}, we know that if
$\delta^{-1}+\max\limits_{u\in\mathbb{S}^{1}}f(u)<0$, the solution
exist at a finite time interval $[0,T^{\ast})$ and the evolving
circle $\widetilde{\mathcal{C}}(\cdot,t)$ converges to a single
point as $t\rightarrow T^{\ast}$. By Proposition \ref{containment},
we know that the evolving curve $F(\cdot,t)$ always lies in the
domain $\widetilde{\mathcal{D}}$ (i.e., a disk) enclosed by
$\widetilde{\mathcal{C}}(\cdot,t)$ for all $t\in[0,T^{\ast})$.
Hence, we can get that $F(\cdot,t)$ must become singular at some
time $T_{\rm{max}}\leqslant T^{\ast}<\infty$.

Now, we need the following conclusion in convex geometry (see, e.g.,
\cite{RSC}).

\textbf{Blaschke Selection Theorem} \emph{Let $K_{j}$ be a sequence
of convex sets which are contained in a bounded set. Then there
exists a subsequence $K_{jk}$ and a convex set $K$ such that
$K_{jk}$ converges to $K$ in the Hausdorff metric.}

In Case $(II)$, since $\widetilde{\mathcal{C}}(\cdot,t)$ shrinks as
$t$ increases and the evolving curve $F(\cdot,t)$ is contained by
the circle $\widetilde{\mathcal{C}}(\cdot,t)$ for each
$t\in[0,T_{\rm{max}})$, this strictly convex closed plane curve
$F(\cdot,t)$ is contained in the circle $\mathcal {C}_1$ for all
$t\in[0,T_{\rm{max}})$. By Blaschke Selection Theorem, we know that
in the sense of the \emph{Hausdorff metric},  $F(\cdot,t)$ converges
to a weakly convex curve $F(\cdot,T_{\rm{max}})$ which might be
degenerated and non-smooth.

We \textbf{claim} that \emph{$F(\cdot,t)$ converges to either a
single point or a limit curve which has the discontinuous curvature
under the further assumption
$\delta^{-1}T_{\rm{max}}+\max\limits_{u\in\mathbb{S}^{1}}f(u)<0$}.

By Proposition \ref{convex} and Lemma \ref{length}, we have
 \begin{eqnarray*}
\frac{d^{2}\mathcal{L}(t)}{dt^{2}}=\int_{0}^{2\pi}\left[k\left(\frac{\partial\widetilde{\sigma}}{\partial\theta}\right)^{2}+k^{-1}\right]d\theta>0
\qquad \qquad {\rm{for~~all}}~t\in[0,T_{\rm{max}}).
 \end{eqnarray*}
Besides, by Proposition \ref{convex}, we have
\begin{eqnarray*}
\widetilde{\sigma}(\theta,t)=\sigma(u,t)=f(u)+\int_{0}^{t}k^{-1}(u,\xi)d\xi\leqslant\delta^{-1}t+\max\limits_{u\in\mathbb{S}^{1}}f(u)
\leqslant\delta^{-1}T_{\rm{max}}+\max\limits_{u\in\mathbb{S}^{1}}f(u)<0
\end{eqnarray*}
for all $t\in[0,T_{\rm{max}})$, which implies
 \begin{eqnarray*}
 \frac{d\mathcal{L}(t)}{dt}=\int_{0}^{2\pi}\widetilde{\sigma}(\theta,t)d\theta<0 \qquad \qquad {\rm{for~~all}}~t\in[0,T_{\rm{max}}).
 \end{eqnarray*}
So, for all $t\in[0,T_{\rm{max}})$, we have
\begin{eqnarray*}
\frac{d\mathcal{L}(t)}{dt}<0, \qquad \qquad
\frac{d^{2}\mathcal{L}(t)}{dt^{2}}>0,
\end{eqnarray*}
which implies that there exists a finite time $T_{0}$ such that
$\mathcal{L}(T_{0})=0$. There will be the following two situations:

\begin{itemize}
\item $T_{0}\leqslant T_{\rm{max}}$. On one hand, by Theorem
\ref{main-2}, there exists a unique classical solution $F(\cdot,t)$
to the IVP $(\ref{curve flow})$ on $[0,T_{0})$. On the other hand,
since $\mathcal{L}(t)$ is decreasing on $[0,T_{0})$ and
$\mathcal{L}(T_{0})=0$, we have $\mathcal{L}(T_{0})\rightarrow0$ as
$t\rightarrow T_{0}$. This implies the curvature $k$ tends to
infinity as $t\rightarrow T_{0}$, and the solution will blow up at
$T_{0}$. Therefore, by the definition of $T_{\rm{max}}$, we have
$T_{0}=T_{\rm{max}}$. So, $F(\cdot,t)$ converges to a point as
$t\rightarrow T_{\rm{max}}$.

\item $T_{0}> T_{\rm{max}}$. In this situation,
$\mathcal{L}(T_{\rm{max}})>0$, which implies that
$F(\cdot,T_{\rm{max}})$ must be non-smooth. Then there will be three
possibilities:

(1) $\|F(u,T_{\rm{max}})\|=\sup|F(u,T_{\rm{max}})|=\infty$. However,
$F(\cdot,t)$ is always contained in the circle $\mathcal {C}_1$,
which implies that $\|F(u,T_{\rm{max}})\|$ must be bounded. This is
a contradiction. So, (1) is impossible.

(2) $\|F_{u}(u,T_{\rm{max}})\|=\infty$. However, the length of the
limit curve $\mathcal{L}(T_{\rm{max}})$ satisfies
\begin{eqnarray*}
\mathcal{L}(T_{max})&=&\lim\limits_{t\rightarrow T_{\rm{max}}}\int_{F(u,t)}ds\\
&=&\lim\limits_{t\rightarrow T_{\rm{max}}}\int_{F(u,t)}|F_{u}(u,t)|du\\
&=&\int_{F(u,t)}\lim\limits_{t\rightarrow T_{\rm{max}}}|F_{u}(u,t)|du\\
&=&\infty
\end{eqnarray*}
which contradicts with $\mathcal{L}(T_{\rm{max}})<\mathcal{L}_{0}$
with $\mathcal{L}_{0}$ the length of the initial curve $F_{0}$. So,
(2) is also impossible.

(3) The curvature function $k$ is discontinuous. We cannot exclude
this possibility. This phenomena will be occurred if the above
shocks are not possible.
\end{itemize}
Our \textbf{claim} before is true. The proof of Theorem \ref{main-3}
is finished.
\end{proof}

\section*{Acknowledgments}
\renewcommand{\thesection}{\arabic{section}}
\renewcommand{\theequation}{\thesection.\arabic{equation}}
\setcounter{equation}{0} \setcounter{maintheorem}{0}

This work was partially supported by the NSF of China (Grant Nos.
11401131 and 11641001) and Key Laboratory of Applied Mathematics of
Hubei Province (Hubei University). The main result of this paper has
been announced by the corresponding author, Prof. Jing Mao, in
``\emph{Conference of Differential Geometry and its Applications
2017}" hold at Faculty of Mathematics and Statistics, Hubei
University on 28$^{th}$-29$^{th}$ May, 2017. The authors would like
to thank master students Chi Xu, Dan-Dan Hu of Faculty of
Mathematics and Statistics, Hubei University for many useful
discussions in the Geometric Seminar organized by Prof. Jing Mao.

 \end{document}